\documentclass[11pt]{amsart}
\usepackage{graphicx}
\usepackage{amssymb,amsmath,amsthm}
\usepackage{epstopdf}
\usepackage{mdframed}
\usepackage{empheq}
\usepackage{bm}
\usepackage{hyperref}
\usepackage{mathabx} %
\usepackage{enumitem}
\usepackage[linesnumbered,ruled,vlined]{algorithm2e}
\usepackage{subcaption}

\DeclareGraphicsRule{.tif}{png}{.png}{`convert #1 `dirname #1`/`basename #1 .tif`.png}

\setlength{\topmargin}{-.2in} 
\setlength{\marginparwidth}{0.5in}%
\setlength{\textwidth}{6.8in} 
\setlength{\textheight}{8.2in}
\setlength{\oddsidemargin}{-.2in}
\setlength{\evensidemargin}{-.2in}

\newtheorem{theorem}{Theorem}

\newtheorem{problem}{Problem}
\newtheorem{lemma}[theorem]{Lemma}
\newtheorem{proposition}[theorem]{Proposition}
\newtheorem{remark}[theorem]{Remark}

\newcommand\bN{\mathbf N}

\newcommand\bW{\mathbf W}
\newcommand\bX{\mathbf X}

\newcommand\bv{\mathbf v}
\newcommand\bw{\mathbf w}

\def\balpha{\boldsymbol{\alpha}}

\def\bvarphi{\boldsymbol{\varphi}}

\newcommand\cC{\mathcal C}

\newcommand\cE{\mathcal E}
\newcommand\cF{\mathcal F}

\newcommand\cL{\mathcal L}

\newcommand\cN{\mathcal N}
\newcommand\cP{\mathcal P}
\newcommand\cU{\mathcal U}

\def\AA{\mathbb{A}}
\newcommand\EE{\mathbb E}

\newcommand\HH{\mathbb H}
\newcommand\JJ{\mathbb J}
\newcommand\PP{\mathbb P}

\newcommand\RR{\mathbb R}
\newcommand\TT{\mathbb T}
\newcommand\VV{\mathbb V}

\newcommand\ff{\mathfrak f}

\makeatletter
\renewcommand\l@paragraph[2]{}
\renewcommand\l@subparagraph[2]{}
\makeatother
\setcounter{tocdepth}{1}
\setcounter{secnumdepth}{3}

\DeclareMathOperator*{\argmin}{arg\,min}
\newcommand\indic{\mathbf{1}}

\title[Machine Learning for Mean Field Optimal Control \& Games]{Convergence Analysis of Machine Learning Algorithms for the Numerical Solution of Mean Field Control and Games: \\ II -- The Finite Horizon Case.}
\author{Ren\'e Carmona \& Mathieu Lauri\`ere}

\newcounter{counterExtraAssumptions}

\begin{document}
\maketitle

\begin{abstract}

We propose two numerical methods for the optimal control of McKean-Vlasov dynamics in finite time horizon. Both methods are based on the introduction of a suitable loss function defined over the parameters of a neural network. This allows the use of machine learning tools, and efficient implementations of stochastic gradient descent in order to perform the optimization.
In the first method, the loss function stems directly from the optimal control problem. The second method tackles a generic forward-backward stochastic differential equation  system (FBSDE) of McKean-Vlasov type, and relies on suitable reformulation as a mean field control problem. 
To provide a guarantee on how our numerical schemes approximate the solution of the original mean field control problem, we introduce a new optimization problem, directly amenable to numerical computation, and for which we rigorously provide an error rate. 
Several numerical examples are provided. Both methods can easily be applied to certain problems with common noise, which is not the case with the existing technology. Furthermore, although the first approach is designed for mean field control problems, the second is more general and can also be applied to the FBSDE arising in the theory of mean field games.
\end{abstract}

\section{Introduction}

The purpose of this paper is to develop numerical schemes for the solution of Mean Field Games (MFGs) and Mean Field Control (MFC) problems. 
The mathematical theory of these problems has attracted a lot of attention in the last decade (see e.g.~\cite{MR2295621,Cardaliaguet-2013-notes,MR3134900,MR3752669,MR3753660}), and from the numerical standpoint several methods have been proposed, see e.g.~\cite{MR2679575,MR2888257,MR3148086,BricenoAriasetalCEMRACS2017,MR3914553} and~\cite{MR3501391,MR3575615,MR3619691,balata2019class} for finite time horizon MFG and MFC respectively, and~\cite{MR3698446,MR3882530,MR3772008} for stationary MFG. 
However, despite recent progress, the numerical analysis of these problems is still lagging behind because of their complexity, in particular when the dimension is high or when the dynamics are affected by a source of common noise.

In~\cite{CarmonaLauriere_DL_periodic}, we have studied the ergodic problems, whose mathematical analysis led to an infinite dimensional optimization problem for which we identified and implemented numerical schemes capable of providing stable numerical solutions. 
Here, we consider mean field control problems in finite time horizon. The thrust of our approach is the numerical solution of Forward-Backward Stochastic Differential Equations (FBSDEs) of the McKean-Vlasov type.  Indeed, the well established probabilistic approach to MFGs and MFC problems posits that the search for Nash equilibria for MFGs, as well as the search for optimal controls for MFC problems, can be reduced to the solutions of FBSDEs of this type. See for example the books \cite{MR3752669,MR3753660} for a comprehensive expos\'e of this approach.

Our mathematical analysis of the model leads to an optimization problem for a large number of agents, for which we can
identify and implement numerical schemes capable of providing stable numerical solutions. 
We prove the theoretical convergence of these approximation schemes and we demonstrate the efficiency of their implementations by comparing their outputs to solutions of benchmark models obtained either by analytical formulas or by deterministic schemes for Partial Differential Equations (PDEs). In particular, we solve numerically two examples with common noise. Although the idea of using machine learning and neural networks for control problems is not new (see e.g.~\cite{MR2137498}), the core of our numerical schemes is based on a generalization to the mean field setting of the deep learning method used very recently e.g. in~\cite{han2016deep-googlecitations,MR3736669} for control problems and partial differential equations. References~\cite{MR2137498} and \cite{han2016deep-googlecitations} tackle optimal control problems by replacing the control by a neural network. In our first method, we extend their idea to the case of mean field interactions and we provide bounds on the approximation error (see Theorem~\ref{thm:main-thm-discreteMKV}). Reference~\cite{MR3736669} approximates the solution of backward stochastic differential equations (BSDEs) by neural network. Our second method generalizes this approach to the case where the BSDE is coupled with a forward equation and furthermore, the system is of mean field type.  Similar ideas have been used simultaneously by Fouque and Zhang in~\cite{FouqueZhang}. Although their problem features an extra difficulty due to the presence of delay,  their work is restricted to a specific linear-quadratic model, and does not provide any theoretical convergence result.

The present paper is structured as follows. In Section~\ref{sec:pb-assumptions}, we introduce useful notations, provide the standing assumptions and state our main result (see Theorem~\ref{thm:main-thm-discreteMKV}), including rates of convergence. In Section~\ref{sec:proof-main}, we outline the key steps for the proof of our main result. In Section~\ref{sec:two-num-meth}, we present two numerical methods for mean field type problems, the first one tackling directly an optimal control of MKV dynamics, and second one dealing with MKV FBSDEs. Last, numerical results are presented in Section~\ref{sec:numres}.

\vskip 2pt\noindent
\emph{Acknowledgements:  Both authors were partially supported by ARO grant AWD1005491 and NSF award
AWD1005433. Also, we would like to thank two anonymous referees for a rigorous review of the original version of the paper. Their insightful comments helped us improve significantly the quality of the paper.}

\section{Formulation of the problem, standing assumptions and main results}
\label{sec:pb-assumptions}

Given a probability measure $\mu_0$ on $\RR^d$, a $d$-dimensional Wiener process $\bW = (W_t)_{t \ge 0}$ and a class $\AA$ of admissible controls taking values in a subset $A$ of $\RR^k$, the MKV control problem we consider can be stated in the following way.

\begin{problem}
\label{pb:MFC1-def}
	Minimize over $\balpha = (\alpha_t)_{t \ge 0} \in \AA$ the quantity
	\begin{equation}
	\label{eq:MFC1-cost}
		J(\balpha) 
		= 
		\EE\left[\int_0^T f(t, X_t, \cL(X_t), \alpha_t)dt + g(X_T,\cL(X_T))\right] \, ,
	\end{equation}
	under the constraint that the process $\bX = (X_t)_{t \ge 0}$ solves the Stochastic Differential Equation (SDE)
	\begin{equation}
	\label{eq:MFC1-dyn}
		dX_t = b(t, X_t, \cL(X_t), \alpha_t) dt + \sigma(t, X_t, \cL(X_t)) dW_t \, , t \ge 0,
		\quad
		X_0 \sim \mu_0,
	\end{equation}
\end{problem}
\noindent
where we use the notation $\cL(V)$ for the law of a random variable $V$.
For the sake of definiteness we choose $\AA = \HH^{2,k}$, the set of $\RR^k$-valued progressively-measurable square-integrable processes defined as:
$$
	\HH^{2,k} = \left\{ Z \in \HH^{0,k} ;\; \EE \int_0^T |Z_s|^2 ds < \infty \right\},
$$
where $\HH^{0,k}$ denotes the set of $\RR^k$-valued  progressively measurable processes on $[0,T]$.
These control processes are often called \emph{open loop}.

When it is helpful to stress which control is being used, we shall denote by $\bX^{\balpha} = (X^{\balpha}_t)_{t \ge 0}$ the solution to~\eqref{eq:MFC1-dyn} when the control $\balpha$ is used.
In equation \eqref{eq:MFC1-dyn}, the drift and volatility coefficients $b$ and $\sigma$ are functions on $[0,T]\times \RR^d\times\cP_2(\RR^d)\times A$ with values in $\RR^d$ and $\RR^{d\times d}$ respectively. We shall assume that the drift and volatility functions $b$ and $\sigma$ are regular enough so that for each admissible control $\balpha$, existence and uniqueness of a process $\bX^{\balpha}$ satisfying \eqref{eq:MFC1-dyn} hold.  Specific assumptions under which this is indeed the case are given in Appendix \ref{ap:assumptions}. We explain below how to understand the notion of regularity with respect to the measure argument $\mu$.

\subsection{Definitions, notations and background}

For any $p\geq 1$, $\cP_p(\RR^d)$ is the set of probability measures of order $p$ on $\RR^d$, namely those probability measures on $\RR^d$ for which the $p$-th moment $M_p(\mu)$ defined by
\begin{equation}
\label{fo:M_p_mu}
	M_p(\mu) = \left( \int_{\RR^d} |x|^p d \mu(x) \right)^{1/p}
\end{equation}
is finite. $W_p$ is the $p$-Wasserstein distance defined, for $\mu,\mu' \in \cP_p(\RR^d)$ by
$$
	W_p(\mu, \mu') = \inf_{\pi \in \Pi(\mu,\mu')} \left[\int_{\RR^d \times \RR^d} |x-y|^p \pi(dx,dy)\right]^{1/p}
$$
where $\Pi(\mu,\mu')$ denotes the set of probability measures on $\RR^d\times \RR^d$ with marginals $\mu$ and $\mu'$. 
For an integer $r \geq 0$ and a domain $D \subseteq \RR^d$, we denote by $\cC^r(D; \RR^{d'})$ the set of functions on $D$ taking values in $\RR^{d'}$ which are continuously differentiable up to order $r$ (included). It is endowed with the usual norm: for $r=0$, this is the sup norm and for $r > 0$, it is the sum of the sup norms of the derivatives up to order $r$. For $K>0$, we will denote by $\cC^0_K(D; \RR^{d'})$ the subset of continuous functions with (sup) norm bounded by $K$. 
For $L>0$, we denote by $Lip_L(D; \RR^{d'})$ the set of Lipschitz functions on $D$ with Lipschitz constant at most $L$. 
When we consider real valued functions, i.e. $d'=1$, we shall write simply $\cC^r(D), \cC^0_K(D)$ and $Lip_L(D)$.

\subsection{Standing assumptions}
\label{sub:assumptions}
In this subsection we introduce the assumptions under which existence of optimal controls holds, and under which we prove convergence of the numerical algorithms we propose to compute the solutions. Some of the assumptions are stated at a high level, and the reader may wonder for which classes of coefficients  these assumptions are satisfied. In Appendix \ref{ap:assumptions}, we give low level explicit conditions  under which all of our assumptions hold. Roughly speaking, 
for the state dynamics and the cost functions, we use slight variations on assumption ``\textnormal{\textbf{Control of MKV Dynamics}}'' from~\cite[p. 555]{MR3752669}. 
Throughout the paper, we assume that the volatility is not controlled and that the initial distribution $\mu_0$ is in $\cP_4(\RR^d)$. 
Still, in order to justify and quantify the approximation of the optimal control by a neural network, we shall need extra regularity assumptions. To formulate them, we introduce more notations. 

The Hamiltonian of the system is the function $H$ defined by
\begin{equation}
\label{eq:def-Hamiltonian}
	H(t, x, \mu, y, z, \alpha)
	= \,  b(t, x, \mu, \alpha) \cdot y
	+
	\sigma(t, x, \mu) \cdot z
	+
	f(t, x, \mu, \alpha)
\end{equation}
for $t \in [0,T]$, $x,y \in \RR^d$, $z \in \RR^{d \times d}$, $\mu \in \cP_2(\RR^d)$ and $\alpha \in A$, and since the volatility $\sigma$ does not depend on the control variable in our setting, we will also make use of the notation $\tilde H$ for the reduced Hamiltonian defined by
\begin{equation}
\label{eq:def-Hamiltonian-red}
	\tilde H(t, x, \mu, y, \alpha)
	= \, 
	 b(t, x, \mu, \alpha) \cdot y
	+
	f(t, x, \mu, \alpha).
\end{equation}

We shall assume that given any $(t, x, \mu, y) \in [0,T] \times \RR^d \times \cP_2(\RR^d) \times \RR^d$, the function $A \ni \alpha \mapsto \tilde H(t, x, \mu, y, \alpha)$ has a unique minimizer $\hat{\alpha}(t, x, \mu, y)$:
\begin{equation}
\label{eq:def-hat-alpha}
	\hat{\alpha}(t, x, \mu, y) = \argmin_{\alpha \in A} \tilde H(t, x, \mu, y, \alpha)
\end{equation}
being (jointly) Lipschitz in all its variables. We shall also assume that the coefficients $b$ and $\sigma$, as well as the cost functions $f$ and $g$, are differentiable with respect to the variables $x$ and $\mu$. The partial derivatives with respect to the argument $\mu$ have to be understood in the Wasserstein sense, or in the Lions sense (L-derivatives). See \cite[Chapter 5]{MR3752669} for details. 
The forward-backward system of SDEs (FBSDE for short) associated to the control problem is (see e.g.~\cite[Section 6.4.2]{MR3752669})
\begin{equation}
\label{eq:MKV-FBSDE}
\left\{
\begin{aligned}
	d X_t
	=
	& b\bigl(t,X_t,\cL(X_t),\hat{\alpha}(t, X_t, \cL(X_t), Y_t)\bigr) dt 
	\\
	& \,\,+\sigma\bigl(t,X_t,\cL(X_t),\hat{\alpha}(t, X_t, \cL(X_t), Y_t)\bigr) dW_t
	\\
	d Y_t
	=
	& - \partial_x H(t, X_t, \cL(X_t), Y_t,Z_t,\hat{\alpha}(t, X_t, \cL(X_t), Y_t))  dt
	\\
	& \,\, - \tilde{\EE}\left[\partial_\mu H(t, \tilde{X}_t, \cL(X_t), \tilde Y_t, \tilde Z_t, \hat{\alpha}(t, \tilde{X}_t, \cL(X_t), \tilde{Y}_t))(X_t)\right] dt + Z_t d W_t,
\end{aligned}
\right.
\end{equation}
with initial condition $X_0 = \xi \in L^4(\Omega, \cF_0, \PP; \RR^d)$ and terminal condition $Y_T = \partial_x g(X_T, \cL(X_T)) + \tilde{\EE}\left[\partial_\mu g(\tilde{X}_T, \cL(X_T))(X_T)\right]$. The tilde $\tilde{}$ over random variables means that these random variables are copies in the sense that they have the same distributions as the original variables, but are possibly defined on a different probability space over which the expectation is denoted by $\tilde \EE$.

We shall assume that the system \eqref{eq:MKV-FBSDE} is uniquely solvable and that there exists a function $\cU$, called master field of the FBSDE~\eqref{eq:MKV-FBSDE}, such that the process $(Y_t)_{t \in [0,T]}$ can be represented as 
\begin{equation}
\label{eq:link-Y-cU}
	Y_t = \cU(t, X_t, \mu_t), \qquad t \in [0,T],
\end{equation}
where $\mu_t = \cL(X_t)$. See for example \cite[Theorem 6.19 p.559]{MR3752669} for an existence and uniqueness result for the FBSDE \eqref{eq:MKV-FBSDE}, and \cite[Lemma 6.25 p.563]{MR3752669} for existence of the master field. 

\begin{remark}
Note that if one merely assumes the existence of a (possibly non-unique) optimal
control, one can inject this optimal control in the forward dynamics of the state and the adjoint equation associated to this control to obtain an FBSDE analog to \eqref{eq:MKV-FBSDE}.
\end{remark}

Next, we introduce the decoupling field
\begin{equation}
\label{eq:def-decouplingfield}
	V(t, x) = \cU(t, x, \mu_t),
\end{equation}
which we assume to be jointly Lipschitz in its variables, and to be twice differentiable with respect to $x$ with $\partial_{xx} V$ being Lipschitz in its variables. The optimal control $\hat{\balpha} = (\hat{\alpha}_t)_{t \in [0,T]}$ can be rewritten in the feedback form
\begin{equation}
\label{eq:hat-alpha-feedbackform}
	\hat{\alpha}_t = \hat{\alpha}(t, X_t, \mu_t, V(t, X_t)).
\end{equation}
Note that $\hat{\alpha}_t$ appearing in the left hand side is a random variable obtained by computing the deterministic function $\hat\alpha$ defined in \eqref{eq:def-hat-alpha} for random arguments depending upon $X_t$.
For this reason, the use of the same letter $\alpha$ for both objects should not be a source of confusion.
\vskip 2pt\noindent
For the sake of the analysis of the time discretization that we will use in our numerical scheme, we shall also assume that, at each $t \in [0,T]$, the feedback function $\hat v(t,x)=\hat{\alpha}(t, x, \mu_t, V(t, x))$ is twice differentiable with second order derivatives which are Lipschitz continuous. This function will play a crucial role in Proposition \ref{prop:approx-feedback-N} below.
We refer the reader to Appendix \ref{ap:assumptions} where we articulate explicitly a set of specific assumptions under which all the properties stated above hold.
Unless otherwise specified, the constants depend only on the data of the problem ($T$, $\mu_0$, $d$, $k$, and the constants appearing in the assumptions), and $C$ denotes a generic constant whose value may change from one line to the next.

\subsection{Approximation results}
\label{sec:approx-result}

Next, we propose a new optimization problem, amenable to numerical computations (see Section~\ref{sec:two-num-meth}), which serves as a proxy for the original MKV control problem, and for which we quantify the approximation error. The rationale behind this new problem is encapsulated in the following three steps:
\begin{itemize}
	\item the distribution $\cL(X_t)$ of the state $X_t$ is approximated by the empirical distribution of $N$ agents;
	\item the set  $\AA$ of controls is approximated by a set of controls in feedback form, the feedback function being given by a neural net with a fixed architecture; 
	\item the time variable is discretized.
\end{itemize}
Before defining the problem, we first introduce notations pertaining to neural networks.

\subsection{Neural networks}
We denote by:
\begin{align*}
	\mathbf{L}^\psi_{d_1, d_2} 
	= \Big\{ \phi: \RR^{d_1} \to \RR^{d_2} \,\Big|\,  &\exists \beta \in \RR^{d_2}, \exists w \in \RR^{d_2 \times d_1}, \forall  i \in \{1,\dots,d_2\}, \;
	\\
	&\qquad \phi(x)_i = \psi\Big(\beta_i + \sum_{j=1}^{d_1} w_{i,j} x_j\Big) \Big\} 
\end{align*}
the set of layer functions with input dimension $d_1$, output dimension $d_2$, and activation function $\psi: \RR \to \RR$. For the sake of definiteness, we shall assume that the activation function 
$\psi:\RR \to \RR$ is a $2\pi-$periodic function of class $\cC^3$, that is three times continuously differentiable, satisfying:
$
	\hat\psi_1:=\int_{-\pi}^\pi \psi(x) e^{-i  x} dx \neq 0.
$
More general activation functions could be accommodated at the expense of additional technicalities. The choice of this class of activation functions is motivated by the fact that we want to find a neural network which approximates, on a compact set, the optimal feedback control while being Lipschitz continuous and whose Lipschitz constant can be related to the one of the optimal control.  
Building on this notation we define:
\begin{equation}
\label{fo:N_psi}
\begin{split}
	\bN^\psi_{d_0, \dots, d_{\ell+1}} 
	= 
	\Big\{ \varphi: \RR^{d_0} \to \RR^{d_{\ell+1}} \,\Big|\, 
	&\forall i \in \{0, \dots, \ell-1\}, \exists \phi_i \in \mathbf{L}^\psi_{d_i, d_{i+1}}, 
	\\
	&\exists \phi_\ell \in \mathbf{L}_{d_{\ell}, d_{\ell+1}},  \varphi = \phi_\ell \circ \phi_{\ell-1} \circ \dots \circ \phi_0  \Big\} \, 
\end{split}
\end{equation}
 the set of regression neural networks with $\ell$ hidden layers and one output layer, the activation function of the output layer being the identity $\psi(x)=x$. We shall not use the superscript $\psi$ when the activation function is the identity. As explained earlier, we assume that $A=\RR^k$, but if we wanted to accommodate control constraints and assume for example that the set $A$ of actions is a compact subset of $\RR^k$, one would have to use a bounded activation function whose range matches $A$. The number $\ell$ of hidden layers, the numbers $d_0$, $d_1$, $\cdots$ , $d_{\ell+1}$ of units per layer, and the activation functions (one single function $\psi$ in the present situation), are what is usually called the architecture of the network. Once it is fixed, the actual network function $\varphi\in \bN^\psi_{d_0, \dots, d_{\ell+1}} $ is determined by the remaining parameters:
$ \theta=(\beta^{(0)}, w^{(0)},\beta^{(1)}, w^{(1)},\cdots,\beta^{(\ell)}, w^{(\ell)})$
defining the functions $\phi_0$, $\phi_1$, $\cdots$ , $\phi_{\ell-1}$ and $\phi_\ell$ respectively. Their set is denoted by $\Theta$. For each $\theta\in\Theta$, the function $\varphi$ computed by the network will sometimes be denoted by $\varphi_\theta$.
We will work mostly with the  case $d_0 = d+1$ and $d_{\ell+1} = k$, dimensions of $(t,x)$ and of the control variable respectively.

As implied by the above discussion, the search for optimal controls in the general class $\AA$ of open loop controls eventually leads to controls in feedback form given by \eqref{eq:hat-alpha-feedbackform}. This fact is the rationale for the second step announced earlier, namely the search for approximately optimal controls among the controls given in feedback form by neural network functions $\varphi$. Notice that if $\balpha$ is such a control given in the form  $\alpha_t = \varphi(t, X^\alpha_t)$ for some $ \varphi \in \bN^\psi_{d_0, \dots, d_{\ell+1}}$, then this $\balpha$ is admissible, i.e. $\balpha\in\AA$, because of standard properties of solutions of MKV stochastic differential equations with Lipschitz coefficients, and the fact that the function $\varphi$ is at most of linear growth. 
See for example \cite{MR1108185}. Last, we stress that the elements of $\bN^\psi_{d_0, \dots, d_{\ell+1}}$ have the same regularity as the activation function $\psi$, namely $\cC^3$.

\begin{remark}
Some models include a control constraint forcing the space $A$ in which controls can be chosen to be a closed convex strict subset of $\RR^k$. Our approach can easily accommodate such a constraint. We only need to adjust the last activation function, not take the identity as suggested above, but instead, to choose for the output layer an activation $\psi$ which is bounded and takes values in $A$.

\end{remark}

\subsection{New optimization problem and main result}
For the discretization of the time interval, we will use a grid $t_0=0 < t_1 < \dots < t_{N_T} = T$ where $N_T$ is a positive integer. For simplicity we consider a uniform grid, that is, $t_n = n \Delta t$, with $\Delta t = T/N_T$.

The three approximation steps described at the beginning of this section lead to the following minimization problem. The latter can be viewed as the problem of a central planner trying to minimize the social cost of $N$ agents using a decentralized control rule in feedback form given by a neural network.

\begin{problem}\label{pb:discrete-MKV}
	Minimize the quantity
	\begin{equation}
	\label{eq:def-checkJN}
		\check J^N(\bvarphi) 
		= 
		\EE\left[\frac{1}{N}\sum_{i=1}^N \Bigl(\Delta t\sum_{n=0}^{N_T-1} f\left(t_n, \check X^i_{t_n}, \check\mu_{t_n}, \varphi(t_n, \check X^i_{t_n})\right)+ g(\check X^i_{t_{N_T}},\check\mu_{t_{N_T}})\Bigr)\right] \, ,
	\end{equation}
over $\bvarphi \in \bN^\psi_{d+1, d_2, \dots, d_{\ell+1}, k}$,  under the dynamic constraint: 
	\begin{equation}
	\label{eq:discrete-dyn}
		\check X^i_{t_{n+1}} = \check X^i_{t_n} + b\left(t_n, \check X^i_{t_n}, \check\mu_{t_n}, \varphi(t_n, \check X^i_{t_n})\right) \Delta t + \sigma\left(t_n, \check X^i_{t_n}, \check\mu_{t_n}\right) \Delta \check W^i_n \, , 	\end{equation}
	for all $n \in \{0,\dots, N_T-1\},$ $i \in \{1,\dots,N\}$,
where the $(\check X^i_0)_{i \in \{1, \dots, N\}}$ are i.i.d. with common distribution $\mu_0$, $\check\mu_{t_n} = \frac{1}{N} \sum_{i=1}^N \delta_{\check X^i_{t_n}}$, and the $(\Delta \check W_n^i)_{i,n}$ are i.i.d. random variables with distribution $\mathcal N(0, \Delta t)$.
\end{problem}

We show that solving Problem~\ref{pb:discrete-MKV} provides an approximate solution to Problem~\ref{pb:MFC1-def}, and we quantify the accuracy of the approximation. The main theoretical result of the paper is the following.
\begin{theorem}
\label{thm:main-thm-discreteMKV}
We have
	$$
		\inf_{\balpha \in \AA} J(\balpha)
		\ge 
		\inf_{\bvarphi \in \bN^{\psi}_{d+1, n_{\mathrm{in}}, k}} \check J^N(\bvarphi) - \epsilon(N, n_{\mathrm{in}}, \Delta t) \, ,
	$$
	where
		$
		\epsilon(N,n_{\mathrm{in}},\Delta t) 
		= \epsilon_1(N) + \epsilon_2(n_{\mathrm{in}}) + \epsilon_3(\Delta t),
		$ 
	with for some $\epsilon_1,\epsilon_2,\epsilon_3$ satisfying:
	$$\epsilon_1(N) \in O\left( N^{-1/\max(d, 4)} \sqrt{1 + \ln (N) \mathbf{1}_{\{d=4\}}} \right), \quad 
		\epsilon_2(n_{\mathrm{in}}) \in O\left( n_{\mathrm{in}}^{-\frac{1}{3(d+1)}} \right), \hbox{ and } 
		\epsilon_3(\Delta t) \in O\left( \sqrt{\Delta t} \right),$$ 
	the constants in the big Bachmann - Landau terms $O(\cdot)$ depending only on the data of the problem and on the activation function $\psi$ through $\hat\psi_1$, $\|\psi'\|_{\cC^0(\TT)}$, $\|\psi''\|_{\cC^0(\TT)}$  and $\|\psi'''\|_{\cC^0(\TT)}$.
\end{theorem}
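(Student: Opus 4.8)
The plan is to prove the inequality by a chain of three reductions, each corresponding to one of the three approximation steps described before Problem~\ref{pb:discrete-MKV}, and each contributing one of the error terms $\epsilon_1,\epsilon_2,\epsilon_3$. Write $\hat{\balpha}$ for an optimal (open-loop) control for Problem~\ref{pb:MFC1-def}, which exists under the standing assumptions, and recall that by \eqref{eq:hat-alpha-feedbackform} it is in feedback form $\hat\alpha_t=\hat\alpha(t,X_t,\mu_t,V(t,X_t))=:\phi^*(t,X_t)$ for a Lipschitz feedback function $\phi^*$ that is, moreover, twice differentiable in $x$ with Lipschitz second derivatives. The strategy is therefore: (i) pass from the continuous-time, infinite-population optimal cost $J(\hat{\balpha})$ to the cost of the continuous-time $N$-agent system driven by the feedback $\phi^*$, incurring the propagation-of-chaos / empirical-measure error $\epsilon_1(N)$; (ii) replace $\phi^*$ by a neural network $\varphi\in\bN^\psi_{d+1,n_{\mathrm{in}},k}$ that approximates it on the relevant compact set while keeping a controlled Lipschitz constant, incurring the approximation error $\epsilon_2(n_{\mathrm{in}})$; (iii) replace the continuous-time $N$-agent SDE system controlled by $\varphi$ with its Euler--Maruyama discretization, i.e.\ the dynamics \eqref{eq:discrete-dyn}, incurring the time-discretization error $\epsilon_3(\Delta t)$. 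Since $\check J^N$ is an infimum over $\bvarphi$, at the end it suffices to exhibit one good $\varphi$, which is exactly the one produced in step (ii).

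For step (i), I would introduce the $N$-agent system $X^{i,\phi^*}$ with dynamics \eqref{eq:MFC1-dyn} using the feedback $\phi^*$ and the empirical measure $\mu^N_t=\frac1N\sum_j\delta_{X^{j,\phi^*}_t}$ in place of $\cL(X_t)$, all agents coupled through this empirical law and driven by independent Wiener processes with i.i.d.\ initial conditions $\sim\mu_0$. Standard propagation-of-chaos estimates for MKV SDEs with Lipschitz coefficients (as in \cite{MR3752669}) give $\sup_{t\le T}\EE|X^{i,\phi^*}_t-X_t|^2$ and $\EE\,W_2(\mu^N_t,\mu_t)^2$ bounded by a quantity of order the Wasserstein rate of convergence of the empirical measure of i.i.d.\ samples, which by the Fournier--Guillin theorem is of order $N^{-2/\max(d,4)}$ up to a logarithmic factor when $d=4$ — this is the source of the precise exponent in $\epsilon_1(N)$. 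Because $f$ and $g$ are Lipschitz in $(x,\mu)$ and the controls on both sides are $\phi^*$ evaluated at the respective states, the difference $|J(\hat{\balpha})-\frac1N\sum_i(\cdots)|$ is controlled by these $L^2$/Wasserstein differences, yielding $J(\hat{\balpha})\ge \EE[\frac1N\sum_i(\Delta t\text{-free integral cost of }X^{i,\phi^*})]-C\epsilon_1(N)$; equivalently, the continuous-time $N$-agent cost under $\phi^*$ is within $C\epsilon_1(N)$ of $\inf_\balpha J(\balpha)$.

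For step (ii), I would restrict attention to a compact set containing, with overwhelming probability, the trajectories of the $N$-agent system (using fourth-moment bounds from $\mu_0\in\cP_4$ and the linear growth of the coefficients, so that the complement contributes negligibly after a truncation argument), and invoke the neural-network approximation machinery built around the periodic $\cC^3$ activation $\psi$ with $\hat\psi_1\neq0$: one can choose $\varphi\in\bN^\psi_{d+1,n_{\mathrm{in}},k}$ with $\|\varphi-\phi^*\|_\infty$ on that compact set of order $n_{\mathrm{in}}^{-1/(3(d+1))}$ — the exponent reflecting a rate for approximating $\cC^3$ functions in dimension $d+1$ — while $\mathrm{Lip}(\varphi)$ stays bounded by a constant times $\mathrm{Lip}(\phi^*)$. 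Feeding $\varphi$ into the $N$-agent SDE, a Gronwall estimate in the state variables (Lipschitz coefficients, uniform-in-$N$ moment bounds) transfers this uniform control-approximation error into an $O(\epsilon_2(n_{\mathrm{in}}))$ change in the cost, so the continuous-time $N$-agent cost under $\varphi$ exceeds $\inf_\balpha J(\balpha)$ by no more than $C(\epsilon_1(N)+\epsilon_2(n_{\mathrm{in}}))$. Step (iii) is then a standard strong-error estimate for the Euler scheme of an SDE with Lipschitz (and, thanks to the extra $\cC^2_b$-type assumption on $x\mapsto\hat\alpha(t,x,\mu_t,V(t,x))$, sufficiently regular) coefficients, giving $\sup_n\EE|\check X^i_{t_n}-X^{i,\varphi}_{t_n}|^2=O(\Delta t)$, hence an $O(\sqrt{\Delta t})$ change in the cost after using Lipschitz continuity of $f,g$ and the Cauchy--Schwarz inequality; this produces $\epsilon_3(\Delta t)$ and, bounding $\inf_{\bvarphi}\check J^N(\bvarphi)\le \check J^N(\varphi)$, closes the chain.

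The main obstacle I expect is step (ii): one must simultaneously control the sup-norm approximation error of the network \emph{and} its Lipschitz constant, because an uncontrolled Lipschitz constant would blow up the Gronwall factor in the $N$-agent dynamics and also break admissibility/moment bounds; this is precisely why the paper restricts to the special periodic $\cC^3$ activation with $\hat\psi_1\neq0$, which presumably allows a Fourier-based construction giving quantitative bounds on both quantities at once. A secondary technical difficulty is making the compact-set localization in steps (ii)--(iii) rigorous uniformly in $N$ — i.e.\ showing the truncation tails are negligible at a rate that does not degrade $\epsilon_1$ — which relies essentially on the $\cP_4$ assumption on $\mu_0$ together with linear growth of $b,\sigma$.
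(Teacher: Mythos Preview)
Your three-step decomposition (propagation of chaos $\to$ neural-network approximation of the optimal feedback on a compact set $\to$ Euler time-discretization) is exactly the paper's strategy, and you have correctly identified the key technical point that the network must approximate $\phi^*$ \emph{with a controlled Lipschitz constant} so that the Gronwall factor in the $N$-agent comparison stays bounded. One refinement: the exponent $-\tfrac{1}{3(d+1)}$ in $\epsilon_2$ does not come directly from a $\cC^3$ approximation rate as you suggest; the raw network rate on a ball of radius $R$ is $(1+R)\,n_{\mathrm{in}}^{-1/(2(d+1))}$ (Proposition~\ref{prop:approx-fct-NN}), and the final exponent arises by \emph{choosing} $R=n_{\mathrm{in}}^{1/(3(d+1))}$ to balance this against the $O(R^{-1/2})$ localization tail from Proposition~\ref{lem:var-J-feedback-compact}.
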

The proof is provided in Section~\ref{sec:proof-main}.

\begin{remark}
	The dependence of the error on the the number $n_{\mathrm{in}}$ of units in the layer could certainly be improved by looking at multilayer neural networks. Unfortunately, quantifying the performance of these architectures is much more difficult and we could not find in the function approximation literature suitable rate of convergence results which could be useful in our setting. This is due to the fact that we need to approximate simultaneously a function and its derivative. Also, the error from the Euler scheme could probably be improved to order $1$ instead of $1/2$ (i.e., the last term $\epsilon_3(\Delta t)$ should be of the order $\Delta t$ instead of $\sqrt{\Delta t}$) since the result of Theorem~\ref{thm:main-thm-discreteMKV} is expected to require only a weak order. However, we are only able to reach this order at the expense of extra regularity properties of the decoupling field. Since we did not want to add (and prove) extra smoothness assumptions on the decoupling field, we only claim order $1/2$ in the mesh of the time discretization.
\end{remark}

\begin{remark}
	Theorem~\ref{thm:main-thm-discreteMKV} provides a bound on the approximation error. In the numerical implementation, the expectation in $\check J^N$ over the $N-$agent population is replaced by an empirical average over a finite number of populations, which leads to an estimation (or ``generalization'') error. 
A bound on this type of error is provided in Theorem~8 of our companion paper~\cite{CarmonaLauriere_DL_periodic} using the regularity of the cost function. Note that the problem analyzed in~\cite{CarmonaLauriere_DL_periodic} is also an optimal control problem (obtained after a suitable reformulation of the original problem) in which we replace the optimal control by a neural network. Due to this kinship, we believe that a similar analysis could be carried out for the time dependent problem tackled in the present work, but due to space constraints we leave this for future work.  
However, strictly speaking, on the top of the two types of error discussed in the text, we should also consider a third source of error coming from the use of stochastic gradient descent to implement algorithmically the approximation of the population ensemble by an empirical average. This could be approached using recent results like those described of~\cite{MR3797719}.

\begin{remark}
	Using similar techniques to those we employ in the proof of Theorem~\ref{thm:main-thm-discreteMKV}, one can show how the optimal control of the approximating problem performs once injected in the original cost functional. In other words, assuming there exists $\hat \bvarphi$ such that  $\check J^N(\hat \bvarphi)  = \inf_{\bvarphi \in \bN^{\psi}_{d+1, n_{\mathrm{in}}, k}} \check J^N(\bvarphi) $, we can derive an upper bound for the difference $|\check J^N(\hat \bvarphi) - J(\hat\balpha)|$. First, Proposition~\ref{prop:approx-discreteT} provides a bound for $|J^N(\hat\bvarphi) -  \check J^N(\hat\bvarphi)|$ in terms of $\Delta t$. Then, $|J^N(\hat\bvarphi) - J(\hat\bvarphi)|$ can be bounded using propagation of chaos type results. This step is in fact already present in the proof of~\cite[Theorem 6.17]{MR3753660}, which is the cornerstone of our proof of Proposition~\ref{prop:approx-feedback-N}.
\end{remark}

\end{remark}

\section{Proof of the main result}
\label{sec:proof-main}

We split the proof into three steps  presented in separate subsections, each one consisting in the control of the approximation error associated to one of the steps described in the bullet points at the beginning of the previous section. As explained above, the first step consists in approximating the mean field problem with open loop controls by a problem with $N$-agents using \emph{distributed} closed-loop controls (see subsection~\ref{SEC:APPROX-NAGENTS-FEEDBACK}); in the second step, we replace general closed-loop controls by the subclass of closed-loop controls that can be represented by neural networks (see subsection~\ref{SEC:APPROX-NN}); in the third step, we discretize time (see subsection~\ref{SEC:APPROX-DISCRETETIME}).

\subsection{Problem with $N$ agents and closed-loop controls}
\label{SEC:APPROX-NAGENTS-FEEDBACK}
Because of
 \eqref{eq:hat-alpha-feedbackform}, we expect the optimal control to be in feedback form. So in
the sequel, we restrict our attention to closed-loop controls that are deterministic functions of $t$ and $X_t$, i.e. of the form $\alpha_t = v(t, X_t)$ for some deterministic function $v : [0,T] \times \RR^d \to A\subset\RR^k$. We denote by $\VV$ the class of admissible feedback functions, i.e., the set of measurable functions $v : [0,T] \times \RR^d \to A$ such that the control process $\balpha$ defined by $\alpha_t = v(t, X^\alpha_t)$ for all $t$ is admissible, i.e. $\balpha \in \AA$. Note that if the feedback function $v$ is Lipschitz in $x$ uniformly in $t$, then the state SDE \eqref{eq:MFC1-dyn} is well posed, and since $v$ is at most of linear growth, standard stability estimates for solutions of Lipschitz SDE of McKean-Vlasov type guarantee that $\balpha\in\AA$ or equivalently $\bv\in\VV$.

\vskip 2pt
According to the first step described above, we recast Problem~\ref{pb:MFC1-def} as the limiting problem for the optimal control of a large number of agents by a \emph{central planner} as the number of agents tends to infinity. In the model with $N<\infty$ agents, the goal is to minimize the average (``social'') cost, when all the agents are using the same control rule, namely the same feedback function of their individual states. The resulting optimization problem is:

\begin{problem}
\label{pb:MKV-Nagents}
	Minimize the quantity 
	\begin{equation}
	\label{eq:def-JN-closedloop}
		J^N(\bv) 
		= 
		\frac{1}{N}\sum_{i=1}^N \EE\left[\int_0^T f(t, X^i_t, \mu^N_t, v(t, X^i_t))dt + g(X^i_T,\mu^N_T)\right] \, ,
	\end{equation}
over $\bv = (v(t, \cdot))_{0\le t \le T} \in \VV$ under the constraint: 
	\begin{equation}
	\label{eq:evolXi-closedloop}
		dX^i_t = b(t, X^i_t, \mu^N_t, v(t,X^i_t)) dt + \sigma(t, X^i_t, \mu^N_t) dW^i_t \, , \quad t \ge 0, \, i \in \{1,\dots,N\} \, ,
	\end{equation}
where the $\bW^i=(\bW^i)_{i=1, \dots, N}$ are independent $d$-dimensional Wiener processes, the $(X^i_0)_{i \in \{1, \dots, N\}}$ are i.i.d. with common distribution $\mu_0$ and are independent of the Wiener processes, and where $\mu^N_t = \frac{1}{N} \sum_{i=1}^N \delta_{X^i_t}$ is the empirical distribution of the population of the $N$ agents at time $t$.
\end{problem}

The following result shows how well this new optimization problem approaches the original one.

\begin{proposition}
\label{prop:approx-feedback-N}
If we define the feedback function $\hat\bv$ by: 
\begin{equation}
\label{fo:vhat}
		\hat v(t,x) = \hat \alpha\left( t, x, \mu_t, V(t, x) \right),
\end{equation}
then
	$$
		\inf_{\balpha \in \AA} J(\balpha)
		\ge 
		J^N(\hat \bv) - \epsilon_1(N) \, ,
	$$
	with 
	$
		\epsilon_1(N) = c_1 \sqrt{N^{-2/\max(d, 4)} \left(1 + \ln (N) \mathbf{1}_{\{d=4\}}\right)}  \, ,
	$
	for some constant $c_1$ depending only on the data of the problem.
\end{proposition}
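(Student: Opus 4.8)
The plan is to compare the optimal value $\inf_{\balpha\in\AA}J(\balpha)$, which by \eqref{eq:hat-alpha-feedbackform} is attained (or approached) by the feedback control $\hat\bv(t,x)=\hat\alpha(t,x,\mu_t,V(t,x))$, with its $N$-agent analogue $J^N(\hat\bv)$. The key point is that $\hat\bv$ is a fixed, deterministic, Lipschitz feedback function (Lipschitz in $x$ uniformly in $t$, because $\hat\alpha$ is jointly Lipschitz by assumption and $V$ is Lipschitz), and this same function is used to drive all $N$ particles in Problem~\ref{pb:MKV-Nagents}. So $J^N(\hat\bv)$ is the social cost of the $N$-particle system associated with the (Lipschitz) McKean--Vlasov SDE whose mean field limit is exactly the controlled process $\bX^{\hat\bv}$ with law $\mu_t$. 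Thus the whole statement reduces to a quantitative propagation-of-chaos estimate: bounding $|J(\hat\bv) - J^N(\hat\bv)|$, where $J(\hat\bv)=\inf_\AA J$, by $\epsilon_1(N)$.

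Concretely, I would proceed as follows. First, couple the $N$-particle system $(X^i_t)_{i}$ in \eqref{eq:evolXi-closedloop} with $N$ i.i.d.\ copies $(\bar X^i_t)_i$ of the mean field optimal trajectory, driven by the same Brownian motions $W^i$ and the same initial conditions $X^i_0$; each $\bar X^i$ solves $d\bar X^i_t = b(t,\bar X^i_t,\mu_t,\hat v(t,\bar X^i_t))dt + \sigma(t,\bar X^i_t,\mu_t)dW^i_t$ with $\mu_t=\cL(\bar X^i_t)=\cL(X^{\hat\bv}_t)$. Second, using the Lipschitz property of $b$, $\sigma$ and $\hat v$ in $x$ and in the measure argument (in $W_1$ or $W_2$), together with Gronwall's lemma, derive the standard bound
$$
  \sup_{0\le t\le T}\EE\big[\,|X^i_t - \bar X^i_t|^2\,\big] \le C\,\EE\Big[\sup_{0\le t\le T} W_2(\mu^N_t,\mu_t)^2\Big] + C\,\EE\Big[\sup_{0\le t\le T} W_2(\bar\mu^N_t,\mu_t)^2\Big],
$$
where $\bar\mu^N_t=\frac1N\sum_i\delta_{\bar X^i_t}$; after absorbing the $W_2(\mu^N_t,\bar\mu^N_t)$ contribution (which is itself controlled by $\frac1N\sum_i|X^i_t-\bar X^i_t|^2$) into the left-hand side, one is left with the mean field fluctuation term $\EE[\sup_t W_2(\bar\mu^N_t,\mu_t)^2]$. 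Third, invoke the quantitative law-of-large-numbers rate for empirical measures of i.i.d.\ samples (Fournier--Guillin), which under the standing $\cP_4$-moment assumption gives $\EE[W_2(\bar\mu^N_t,\mu_t)^2] \le C\,N^{-2/\max(d,4)}(1+\ln N\,\mathbf1_{\{d=4\}})$ — exactly the form of $\epsilon_1(N)^2$; a uniformity-in-$t$ argument (e.g.\ moment bounds plus continuity of $t\mapsto\mu_t$, or a maximal inequality) upgrades this to the supremum in $t$. Fourth, plug these estimates into the cost functionals: since $f$ and $g$ are Lipschitz in $(x,\mu)$,
$$
  \big|J(\hat\bv) - J^N(\hat\bv)\big| \le C\,\frac1N\sum_{i=1}^N\Big(\EE\big[\sup_t|X^i_t-\bar X^i_t|\big] + \EE\big[\sup_t W_1(\mu^N_t,\mu_t)\big]\Big) \le C\,\epsilon_1(N),
$$
using Cauchy--Schwarz to pass from $L^2$ to $L^1$ bounds; here one also needs the Lipschitz-in-$\alpha$ property of $f$ composed with the Lipschitz feedback $\hat v$. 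Finally, since $\inf_\AA J = J(\hat\bv)$ by \eqref{eq:hat-alpha-feedbackform}, rearranging gives $\inf_\AA J(\balpha) \ge J^N(\hat\bv) - \epsilon_1(N)$.

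The main obstacle, I expect, is making the propagation-of-chaos estimate genuinely \emph{uniform in time} with the stated rate, and handling the Wasserstein-dependence of the coefficients cleanly: the term $\EE[\sup_t W_2(\mu^N_t,\mu_t)^2]$ must be split carefully so that the part comparing the true particle empirical measure $\mu^N_t$ to the i.i.d.\ empirical measure $\bar\mu^N_t$ is absorbed by Gronwall rather than double-counted, while only the intrinsic statistical fluctuation $W_2(\bar\mu^N_t,\mu_t)$ is estimated by Fournier--Guillin. A secondary technical point is verifying that all the constants remain independent of $N$ and of the particle index $i$ (which follows from exchangeability, so $\EE[\sup_t|X^i_t-\bar X^i_t|^2]$ is the same for all $i$), and that the $\cP_4$ assumption on $\mu_0$ propagates to uniform fourth-moment bounds on $\sup_t|X^i_t|$, as needed both for the Fournier--Guillin rate and for integrability of the cost terms.
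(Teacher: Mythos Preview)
Your proposal is correct and takes essentially the same approach as the paper: both compare $J(\hat\bv)=\inf_{\AA} J$ with $J^N(\hat\bv)$ via a quantitative propagation-of-chaos estimate, the rate $\epsilon_1(N)$ coming from Fournier--Guillin-type bounds on empirical measures under the $\cP_4$ moment assumption. The paper does not spell out the coupling/Gronwall argument you outline but simply invokes \cite[Theorem~6.17]{MR3753660}, whose proof contains precisely this chain of estimates; one minor remark is that your uniformity-in-$t$ concern is milder than you suggest, since both the Gronwall step and the cost comparison only require $\int_0^T \EE[W_2(\bar\mu^N_s,\mu_s)^2]\,ds$ rather than $\EE[\sup_t W_2(\bar\mu^N_t,\mu_t)^2]$.
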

Recall that $\hat\alpha$ is the minimizer of the Hamiltonian defined by~\eqref{eq:def-hat-alpha},  $\mu_t$ and $V$ are respectively the marginal distribution of $X_t$ of the associated MKV FBSDE system~\eqref{eq:MKV-FBSDE} and the decoupling field defined by~\eqref{eq:def-decouplingfield}, and as explained above $\hat\bv$ is admissible under the standing assumptions. 
The result follows from a slight modification of the proof of~\cite[Theorem 6.17]{MR3753660}. 
The proof is deferred to Appendix~\ref{sec:proof-sec-approx-Nagents-feedback}.

\subsection{Problem with neural networks as controls}
\label{SEC:APPROX-NN}

Next, we show that the feedback function $\hat\bv$ used in Proposition~\ref{prop:approx-feedback-N}, can be approximated by a neural network in such a way that minimizing over neural networks ends up being not much worse than minimizing over feedback controls, and we quantify the loss due to this approximation.

The main result of this section is the following. 

\begin{proposition}
\label{prop:approx-NN-cmpJ-epsilon}
There exist two positive constants $K_1$ and  $K_2$ depending on the data of the problem and on $\psi$ through $\hat\psi(1)$, $\|\psi'\|_{\cC^0(\TT)}$, $\|\psi''\|_{\cC^0(\TT)}$  and $\|\psi'''\|_{\cC^0(\TT)}$, 
such that for each integer $n_{\mathrm{in}} \ge 1$, there exists $\hat\bvarphi \in \bN^{\psi}_{d+1,n_{\mathrm{in}},k}$  such that the Lipschitz constants of $\hat\bvarphi$, $\partial_x\hat\bvarphi$ and $\partial^2_{x,x}\hat\bvarphi$ are bounded by $K_1$, and which satisfies
	$$
		J^N(\hat \bv)
		\ge 
		J^N(\hat\bvarphi) -  K_2 n_{\mathrm{in}}^{- \frac{1}{3(d+1)}}.
	$$
\end{proposition}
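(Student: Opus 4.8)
The plan is to obtain $\hat\bvarphi$ by approximating, uniformly on a large space--time box, the optimal feedback $\hat v(t,x)=\hat\alpha(t,x,\mu_t,V(t,x))$ by a one--hidden--layer network, and then to transfer this pointwise estimate to the cost functional $J^N$ via stability estimates for the controlled $N$--particle McKean--Vlasov system. Three ingredients enter: (A) \emph{regularity of the target:} under the standing assumptions $x\mapsto\hat v(t,x)$ is bounded in $\cC^{2,1}$ uniformly in $t$ and $\hat v$ has a controlled modulus of continuity in $t$, all bounds depending only on the data; (B) \emph{a quantitative approximation statement:} for each $n_{\mathrm{in}}$ there is a radius $R=R(n_{\mathrm{in}})\to\infty$ and $\hat\bvarphi\in\bN^\psi_{d+1,n_{\mathrm{in}},k}$ with $\|\hat\bvarphi-\hat v\|_{\cC^0([0,T]\times[-R,R]^d)}+R^{-1}\le C\,n_{\mathrm{in}}^{-1/(3(d+1))}$ and with $\cC^{2,1}$--in--$x$ norm bounded by a constant $K_1$ independent of $n_{\mathrm{in}}$; (C) \emph{moment bounds} for the $N$--particle systems associated with $\hat\bv$ and with $\hat\bvarphi$, uniform in $N$.

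For (B) I would proceed as follows. First approximate $\hat v$ on the periodised box by a tensor--product de la Vallée Poussin trigonometric polynomial $\eta_m$ of degree $m$: since these operators are uniformly bounded on $\cC^j$ for $j\le 3$, the polynomial $\eta_m$ inherits a $\cC^{2,1}$--in--$x$ norm controlled by that of $\hat v$, while $\|\eta_m-\hat v\|_{\cC^0}$ is a negative power of $m$. Then realise $\eta_m=\sum_{|k|\le m}c_k e^{\mathrm{i} k\cdot\omega z}$, $z=(t,x)$, by a network: each mode $e^{\mathrm{i} k\cdot\omega z}$ is reproduced, up to an $O(q^{-3})$ error, by $q$ shifted copies $\psi(k\cdot\omega z+\theta_\ell)$ of the activation with coefficients $e^{-\mathrm{i}\theta_\ell}/(q\hat\psi_1)$ --- this is where $\hat\psi_1\neq0$, $\psi\in\cC^3$, and the quantities $\|\psi'\|,\|\psi''\|,\|\psi'''\|$ enter the bounds on $\hat\bvarphi$ and its first two derivatives. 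The resulting network has $n_{\mathrm{in}}\asymp q\,m^{d+1}$ units; one then takes $q$ to be a fixed power of $m$, large enough that the $O(q^{-3})$ reproduction errors, after amplification by the Bernstein--type factors $|\omega k|^j\le(\omega m)^j$ coming from differentiating the modes, still leave the $\cC^{2,1}$--in--$x$ norm of $\hat\bvarphi$ bounded by $K_1$; rewriting the remaining total error in terms of $n_{\mathrm{in}}$ yields the rate $n_{\mathrm{in}}^{-1/(3(d+1))}$. This is the analogue of the machinery developed in the ergodic companion paper.

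For (C): both $\hat\bv$ and $\hat\bvarphi$ are Lipschitz in $x$ with constants depending only on the data, resp.\ bounded by $K_1$, so both are admissible and the $N$--particle systems $(X^{i,\hat\bv})_i$, $(X^{i,\hat\bvarphi})_i$ are well posed with fourth moments bounded uniformly in $N$. Standard stability estimates for Lipschitz McKean--Vlasov SDEs (Gronwall, with a constant depending on $K_1$ but not on $n_{\mathrm{in}}$) give
$$
\max_{i}\EE\Big[\sup_{t\le T}\big|X^{i,\hat\bv}_t-X^{i,\hat\bvarphi}_t\big|^2\Big]\le C\max_i\int_0^T\EE\Big[\big|\hat v(t,X^{i,\hat\bv}_t)-\hat\bvarphi(t,X^{i,\hat\bv}_t)\big|^2\Big]\,dt .
$$
Splitting the right--hand side according to whether $X^{i,\hat\bv}_t$ lies in $[-R,R]^d$ bounds it by $\|\hat v-\hat\bvarphi\|_{\cC^0([0,T]\times[-R,R]^d)}^2+CR^{-2}$, the last term coming from the at most linear growth of $\hat v$ and $\hat\bvarphi$ together with Markov's inequality against the fourth moment. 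Inserting this into $J^N(\hat\bv)-J^N(\hat\bvarphi)$ and using the local Lipschitz / polynomial growth regularity of $f$ and $g$ from the standing assumptions (with $W_2(\mu^N_t,\tilde\mu^N_t)^2\le\frac1N\sum_i|X^{i,\hat\bv}_t-X^{i,\hat\bvarphi}_t|^2$ and Cauchy--Schwarz against the moment bounds) gives $|J^N(\hat\bv)-J^N(\hat\bvarphi)|\le C\big(\|\hat v-\hat\bvarphi\|_{\cC^0([0,T]\times[-R,R]^d)}+R^{-1}\big)$, which by the choice of $R$ in (B) is at most $K_2\,n_{\mathrm{in}}^{-1/(3(d+1))}$, with $K_2$ depending only on the data and on $\psi$ through $\hat\psi_1,\|\psi'\|,\|\psi''\|,\|\psi'''\|$.

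The main obstacle is (B). The tension is that the resolution $m$ must tend to infinity to make the approximation error small, while the Lipschitz constants of $\hat\bvarphi$, $\partial_x\hat\bvarphi$ and $\partial^2_{x,x}\hat\bvarphi$ have to stay bounded uniformly in $n_{\mathrm{in}}$ --- and differentiating a band--$m$ network amplifies errors by powers of $m$. Reconciling the two forces one to use far more neurons than a pure $\cC^0$ approximation would require (hence the $m^{d+1}$ count in $n_{\mathrm{in}}$ and the extra factor that produces the $3$ in the exponent), and is precisely why the rate degrades from the $n_{\mathrm{in}}^{-1/(d+1)}$ one would naively expect for a Lipschitz target down to $n_{\mathrm{in}}^{-1/(3(d+1))}$; the $m^{d+1}$ dependence itself is the curse of dimensionality inherent in single--hidden--layer approximation with the function--approximation tools currently available. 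The remaining steps --- truncation, the McKean--Vlasov stability estimate, and propagating the $\cC^0$ bound through $f$ and $g$ --- are routine given the standing assumptions.
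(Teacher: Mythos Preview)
Your proposal is correct and follows essentially the same route as the paper: the paper packages your step (B) as a separate Proposition~\ref{prop:approx-fct-NN} (invoking the Mhaskar--Micchelli theorem as a black box rather than sketching the de la Vall\'ee Poussin plus mode-reproduction argument), packages your step (C) as Proposition~\ref{lem:var-J-feedback-compact}, and then assembles them with the explicit choice $R = n_{\mathrm{in}}^{1/(3(d+1))}$. The only nuance worth flagging is that in the paper the factor $3$ in the exponent arises from balancing the approximation error $\Gamma \sim (1+R)\,n_{\mathrm{in}}^{-1/(2(d+1))}$ (the $(1+R)$ coming from rescaling the growing box to the torus) against the truncation contribution $1/R$ in the cost-stability bound $(\Gamma^2 + 1/R)^{1/2}$, rather than purely from the neuron budget spent on derivative control as your final paragraph suggests.
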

The proof is based on two technical results proven in Proposition~\ref{prop:approx-fct-NN} and Proposition~\ref{lem:var-J-feedback-compact} below. Given the Lipschitz continuity of the optimal feedback control $\hat{\bv}$ w.r.t. $(t,x)$ under the standing assumptions,
and given the result of Proposition~\ref{prop:approx-fct-NN} below quantifying the rate at which one can approximate Lipschitz functions by neural networks over compact sets, the proof of the above result relies on the regularity property of the objective functional provided by Proposition~\ref{lem:var-J-feedback-compact}: if two controls are close enough, the objective value does not vary too much.

The proof of Proposition~\ref{prop:approx-fct-NN} relies on a special case of~\cite[Theorems 2.3 and 6.1]{MhaskarMicchelli} which we state below for the sake of completeness. It forces us to work with periodic functions, but in return, it provides a neural network approximating simultaneously a function and its derivative. This is especially important in an optimal control setting as it provides needed bounds on the Lipschitz constant of the control. We use this approximation estimate locally so we do need to assumes periodicity in the statement of our result.
First we recall a standard notation. For a positive integer $m$, $\TT^m$ denotes the $2\pi-$torus in dimension $m$.
For positive integers $n$ and $m$, and a function $g \in \cC^0(\TT^m) $, $E_n^m(g)$ denotes the trigonometric degree of approximation of $g$ defined by
$
	E_n^m(g) = \inf_{P} \|g - P\|_{\cC^0(\TT^m)},
$
where the infimum is over trigonometric polynomials of degree at most $n$ in each of its $m$ variables. While \cite[Theorems 2.3 and 6.1]{MhaskarMicchelli} do not differentiate between time and space variables, and the same regularity is assumed for all the components of the variables, an inspection of the proofs of these theorems actually shows that the result stated as Theorem~\ref{th:MM} in Appendix \ref{app:proof-approx-NN} holds.

\vskip 4pt
In the sequel, we denote by $\cC(R, K, L_1, L_2, L_3)$ the class of functions $\ff: [0,T] \times \bar{B}_d(0,R) \ni (t,x) \mapsto \ff(t,x) \in \RR^{k}$ such that $\ff$ is Lipschitz continuous in $(t,x)$ with $\cC^0([0,T] \times \bar{B}_d(0,R))-$norm bounded by $K$ and Lipschitz constant bounded by $L_1$, and $\ff$ is twice differentiable w.r.t. $x$ such that for every $i=1,\dots,d$, $\partial_{x_i} \ff$ is Lipschitz continuous w.r.t. $(t,x)$ with Lipschitz constant bounded by $L_2$  and for every $i,j=1,\dots,d$, $\partial_{x_i, x_j} \ff$ is Lipschitz continuous w.r.t. $(t,x)$ with Lipschitz constant bounded by $L_3$. Note that this class of functions implicitly depends on $d,k,T$, which are part of the \emph{data of the problem}. We will sometimes use the notations $\nabla \ff = (\partial_{x_1} \ff, \dots, \partial_{x_d} \ff)$ and $\nabla_{(t,x)} \ff = (\partial_t \ff, \partial_{x_1} \ff, \dots, \partial_{x_d} \ff)$.

\vskip 2pt

We first recall a useful result from approximation theory which we shall use repeatedly.

\begin {theorem}[Theorems 2.3 and 6.1 in~\cite{MhaskarMicchelli}]
\label{th:MM}
	There exists an absolute constant $C_{mm}$ such that for every positive integers $d, n$ and $N$, there exists a positive integer $n_{\mathrm{in}} \le C_{mm} N n^d$ with the following property.
	 For any $\ff:\TT\times \TT^{d} \to \RR$ of class $\cC^{1,3}(\TT\times\TT^d)$, that is continuously differentiable in the time variable $t\in\TT$ and three times continuously differentiable in the space variable $x\in\TT^d$, there exists $\varphi_\ff \in \bN^\psi_{d+1, n_{\mathrm{in}}, 1}$ such that:
	\begin{align}
		\label{eq:MhaskarMicchelli-bdd-f}
		\|\ff - \varphi_\ff\|_{\cC^0(\TT^{d+1})} &\leq c\left[E^{d+1}_n(\ff) + E^1_N( \psi) n^{(d+1)/2} \|\ff\|_{\cC^0(\TT^{d+1})} \right],
		\\
		\label{eq:MhaskarMicchelli-bdd-grad-f}
		\|\partial_i \ff - \partial_i \varphi_\ff\|_{\cC^0(\TT^{d+1})} &\leq c\left[E^{d+1}_n(\partial_i \ff) + E^1_N(\psi') n^{(d+1)/2} \|\partial_i \ff\|_{\cC^0(\TT^{d+1})} \right], 
		\\
		\label{eq:MhaskarMicchelli-bdd-grad2-f}
		\|\partial_{i,j} \ff - \partial_{i,j} \varphi_\ff\|_{\cC^0(\TT^{d+1})}
		 &
		\leq c\left[E^{d+1}_n(\partial_{i,j} \ff) + E^1_N(\psi'') n^{(d+1)/2} \|\partial_{i,j} \ff\|_{\cC^0(\TT^{d+1})} \right], 
	\end{align}
	for all $i,j = 2,\dots,d+1$, 
	where $\partial_i$ denotes the partial derivative with respect to the $i^{th}$ variable and the constant $c$ depends only on $d$ and $\hat\psi(1)$, and where the degree of trigonometric approximation $E^m_n(\cdot)$ was defined in the text.
\end{theorem}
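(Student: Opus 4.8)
The plan is to run the classical two-stage construction behind Theorems~2.3 and~6.1 of~\cite{MhaskarMicchelli}, the only modification being to carry the time coordinate along separately: since $f$ lives on the product torus $\TT\times\TT^{d}$ and one works throughout with trigonometric polynomials of \emph{coordinate} degree $n$, and only the space derivatives $\partial_i$, $\partial_{i,j}$ ($i,j=2,\dots,d+1$) need to be tracked, the proof of those theorems applies with only cosmetic changes. Stage~1 replaces $f$ by a trigonometric polynomial $T_n$; Stage~2 replaces each exponential occurring in $T_n$ by a shallow $\psi$-network, the cost of this replacement being governed by how well $\psi,\psi',\psi''$ are approximated by degree-$N$ trigonometric polynomials. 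The structural constraint to respect throughout is that one and the \emph{same} network $\varphi_f$ must simultaneously approximate $f$, $\partial_i f$ and $\partial_{i,j}f$.

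For Stage~1 I would take $T_n=K_n\ast f$ with $K_n$ a de la Vall\'ee--Poussin type summability kernel of bounded $L^1$-norm that reproduces trigonometric polynomials of coordinate degree $\le n$. Since convolution commutes with differentiation, one obtains at once $\|f-T_n\|_{\cC^0(\TT^{d+1})}\le cE_n^{d+1}(f)$, $\|\partial_i f-\partial_i T_n\|_{\cC^0(\TT^{d+1})}\le cE_n^{d+1}(\partial_i f)$ and $\|\partial_{i,j}f-\partial_{i,j}T_n\|_{\cC^0(\TT^{d+1})}\le cE_n^{d+1}(\partial_{i,j}f)$ with $c=c(d)$, together with the a priori bounds $\|T_n\|_{\cC^0}\le c\|f\|_{\cC^0}$, $\|\partial_i T_n\|_{\cC^0}\le c\|\partial_i f\|_{\cC^0}$, $\|\partial_{i,j}T_n\|_{\cC^0}\le c\|\partial_{i,j}f\|_{\cC^0}$. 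Writing $T_n(y)=\sum_{|k|_\infty\le n}c_k e^{\mathrm{i}k\cdot y}$, and noting that $\partial_i T_n$ and $\partial_{i,j}T_n$ are the trigonometric polynomials with coefficients $\mathrm{i}k_i c_k$ and $-k_ik_j c_k$, Parseval and Cauchy--Schwarz over the $O(n^{d+1})$ frequencies turn these a priori bounds into
\[
\sum_{k}|c_k|\le c\,n^{(d+1)/2}\|f\|_{\cC^0},\qquad \sum_{k}|k_i|\,|c_k|\le c\,n^{(d+1)/2}\|\partial_i f\|_{\cC^0},\qquad \sum_{k}|k_ik_j|\,|c_k|\le c\,n^{(d+1)/2}\|\partial_{i,j} f\|_{\cC^0},
\]
which are the source of the factors $n^{(d+1)/2}$ in~\eqref{eq:MhaskarMicchelli-bdd-f}--\eqref{eq:MhaskarMicchelli-bdd-grad2-f}.

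For Stage~2 I would realize, for a scalar phase $u\in\TT$, the profile $u\mapsto e^{\mathrm{i}u}$ (equivalently $\cos,\sin$) by a width-$O(N)$ layer of translates of $\psi$. This is where~\eqref{eq:cond-sigma-Fourier} is used: from the identity $\int_{-\pi}^{\pi}\psi(u+t)e^{-\mathrm{i}t}\,dt=\hat\psi_1\,e^{\mathrm{i}u}$, a discretization of the integral at $O(N)$ nodes (the quadrature adapted to $\psi$ used in~\cite{MhaskarMicchelli}) gives $\Psi_N(u)=\sum_{\ell}a_\ell\,\psi(u+b_\ell)$ with $\|e^{\mathrm{i}\cdot}-\Psi_N\|_{\cC^0(\TT)}\le cE_N^1(\psi)/|\hat\psi_1|$, and since differentiating this representation $r$ times merely replaces $\psi$ by $\psi^{(r)}$ in the profile, also $\|(e^{\mathrm{i}\cdot})^{(r)}-\Psi_N^{(r)}\|_{\cC^0(\TT)}\le cE_N^1(\psi^{(r)})/|\hat\psi_1|$ for $r=1,2$. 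Replacing $e^{\mathrm{i}k\cdot y}$ by $\Psi_N(k\cdot y)$ in $T_n$ — an affine pre-composition, hence still an element of $\bN^{\psi}_{d+1,n_{\mathrm{in}},1}$ with $n_{\mathrm{in}}\le C_{mm}Nn^{d}$ by the neuron count of~\cite{MhaskarMicchelli} — defines $\varphi_f$; then $\partial_i\varphi_f=\sum_k c_k k_i\Psi_N'(k\cdot y)$ is term-by-term close to $\partial_i T_n=\sum_k \mathrm{i}c_k k_i e^{\mathrm{i}k\cdot y}$, and similarly for $\partial_{i,j}$ with $\Psi_N''$, so that
\[
\|T_n-\varphi_f\|_{\cC^0(\TT^{d+1})}\le\Bigl(\sum_{k}|c_k|\Bigr)\frac{cE_N^1(\psi)}{|\hat\psi_1|},\qquad \|\partial_i T_n-\partial_i\varphi_f\|_{\cC^0(\TT^{d+1})}\le\Bigl(\sum_{k}|k_i|\,|c_k|\Bigr)\frac{cE_N^1(\psi')}{|\hat\psi_1|},
\]
and the analogous bound for $\partial_{i,j}$ with $E_N^1(\psi'')$. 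Adding the Stage-1 errors and inserting the Cauchy--Schwarz estimates yields exactly~\eqref{eq:MhaskarMicchelli-bdd-f}--\eqref{eq:MhaskarMicchelli-bdd-grad2-f}, with $c$ depending only on $d$ and $\hat\psi_1$.

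The main obstacle is the bookkeeping that ties all three estimates to the \emph{same} $\varphi_f$: this is precisely why one must go through the ridge profile $\Psi_N(k\cdot y)$ rather than any ad hoc interpolant, since then differentiation in $y$ only produces the bounded factors $|k_i|\le n$ and the profile derivatives $\Psi_N^{(r)}$, collapsing everything to the univariate question of approximating $\psi^{(r)}$ by degree-$N$ trigonometric polynomials. Keeping $c$ absolute (dependent only on $d$ and $\hat\psi_1$, not on the norms $\|\psi^{(r)}\|_{\cC^0}$, which only enter later when one bounds the $E_N^1(\psi^{(r)})$) and squeezing the width down to $C_{mm}Nn^{d}$ requires the careful choice of summability kernel and quadrature made in~\cite{MhaskarMicchelli}; apart from that, the argument is theirs, simply read off with the time coordinate singled out.
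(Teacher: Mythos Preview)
The paper does not supply a proof of this statement: it is quoted as Theorems~2.3 and~6.1 of~\cite{MhaskarMicchelli}, with the single remark that ``the proofs of these theorems actually give the following result'' once one singles out the time coordinate and tracks only the space derivatives. Your sketch is a faithful outline of precisely that Mhaskar--Micchelli argument --- de la Vall\'ee--Poussin type kernel for Stage~1, then the quadrature of $\hat\psi_1^{-1}\int\psi(u+t)e^{-\mathrm{i}t}\,dt=e^{\mathrm{i}u}$ for Stage~2 --- so there is nothing to compare: you have written out what the paper merely cites.

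One bookkeeping point worth flagging: you invoke ``the neuron count of~\cite{MhaskarMicchelli}'' to get $n_{\mathrm{in}}\le C_{mm}Nn^{d}$, but your own Stage~1 expansion runs over all $k$ with $|k|_\infty\le n$ in $d+1$ variables, which is $O(n^{d+1})$ frequencies and hence $O(Nn^{d+1})$ neurons. The paper itself silently uses the bound $\tilde n_{\mathrm{in}}\le C_{mm}Nn^{d+1}$ when it applies the theorem in the proof of Proposition~\ref{prop:approx-fct-NN}, so the exponent $d$ in the theorem statement appears to be a typo for $d+1$ (or a leftover from the original Mhaskar--Micchelli setting on $\TT^d$ rather than $\TT^{d+1}$); your argument is consistent with the latter, and that is what is actually needed downstream.
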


\vskip 2pt
The workhorse of our control of the approximation error is the following. 

\begin{proposition}
\label{prop:approx-fct-NN} 
For every real numbers $K>0$, $L_1>0, L_2>0$  and $L_3>0$, there exists a constant $C$ depending only on the above constants, on $d,k,T,$ and on the activation function through $\hat\psi(1)$, $\|\psi'\|_{\cC^0}$, $\|\psi''\|_{\cC^0}$,  and $\|\psi'''\|_{\cC^0}$, and there exists a constant $n_0$ depending only on $C_{mm}$ and $d$ with the following property. For every $R>0$, for every $\ff \in \cC(R, K, L_1, L_2, L_3)$ and for every integer $n_{\mathrm{in}}>n_0$, there exists a one-hidden layer neural network $\varphi_{\ff} \in \bN^{\psi}_{{d + 1}, n_{\mathrm{in}}, {k}}$ such that
	$$
		\|\ff - \varphi_{\ff}\|_{\cC^0([0,T] \times \bar{B}_d(0,R); \RR^{k})} \le C (1+R)n_{\mathrm{in}}^{-1/(2 (d+1))}, 
	$$
and such that the Lipschitz constants of $\varphi_{\ff}$, $\partial_x \varphi_{\ff}$ and $\partial^2_{x,x} \varphi_{\ff}$ are at most $C (1+Rn_{\mathrm{in}}^{-1/(2 (d+1))})$.
\end{proposition}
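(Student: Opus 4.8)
The plan is to transport $f$ to a $2\pi$-periodic function on $\TT^{d+1}$, regularize it, apply the Mhaskar--Micchelli estimate (Theorem~\ref{th:MM}) together with Jackson's inequality, and carry the resulting network back to $[0,T]\times\bar{B}_d(0,R)$ while bookkeeping all the constants. By a componentwise argument (treat each coordinate of $f$ separately, stack the outputs, and pad the hidden layer with zero-output neurons to reach exactly $n_{\mathrm{in}}$ units) one reduces to $k=1$; enlarging $R$ one may assume $R\ge1$; and for $n_{\mathrm{in}}$ below a threshold $n_0=n_0(C_{mm},d)$ the estimate is trivial (take $\varphi_f\equiv0$), so one assumes $n_{\mathrm{in}}\ge n_0$.

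\emph{Transport and regularization.} Fix affine maps sending $[0,T]$ onto $[-\pi/2,\pi/2]$ and $\bar{B}_d(0,R)$ into $[-\pi/2,\pi/2]^d$ (slopes $\sim 1/T$ and $\sim 1/R$), multiply $f$ composed with their inverses by a fixed $\cC^\infty$ cutoff equal to $1$ on $[-\pi/2,\pi/2]^{d+1}$ and supported in $(-\pi,\pi)^{d+1}$, and extend by periodicity; this gives $\tilde f\in\cC^0(\TT^{d+1})$ coinciding with $f$ up to the change of variables on the region of interest, with $\|\tilde f\|_{\cC^0}\le CK$ and such that $\tilde f$, its first spatial derivatives and its second spatial derivatives are Lipschitz with constants $\le C(1+R)\Lambda$, $C(1+R)^2\Lambda$, $C(1+R)^3\Lambda$ respectively, where $\Lambda:=K+L_1+L_2+L_3$ (each extra $x$-derivative contributing a factor $\sim R$ through the chain rule). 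Since Theorem~\ref{th:MM} requires $\cC^{1,3}$ regularity, which $\tilde f$ lacks (it is only Lipschitz in $t$ and $\cC^{2,1}$ in $x$), mollify $\tilde f$ on the torus at a scale $\delta$ to be chosen: $\tilde f_\delta\in\cC^\infty\subset\cC^{1,3}$, $\|\tilde f_\delta-\tilde f\|_{\cC^0}\le C(1+R)\Lambda\,\delta$, the $\cC^0$ norms of $\tilde f_\delta$ and of its spatial derivatives up to order $3$ stay bounded by $C(1+R)^j\Lambda$ ($j=0,\dots,3$; no $\delta$ enters, because $\tilde f$ and its first two spatial derivatives are Lipschitz), and $E^{d+1}_n(\partial^{(j)}_\xi\tilde f_\delta)\le C(1+R)^{j+1}\Lambda(\delta+n^{-1})$ for $j=0,1,2$ (Jackson applied to the Lipschitz function $\partial^{(j)}_\xi\tilde f$).

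\emph{Approximation and transport back.} Apply Theorem~\ref{th:MM} to $\tilde f_\delta$ with free integers $n,N$; using $\psi\in\cC^3$ and Jackson ($E^1_N(\psi)\le CN^{-3}$, $E^1_N(\psi')\le CN^{-2}$, $E^1_N(\psi'')\le CN^{-1}$) one obtains $\varphi\in\bN^\psi_{d+1,m,1}$ with $m\le C_{mm}Nn^d$ and $\cC^0$ bounds on $\partial^{(j)}_\xi(\tilde f_\delta-\varphi)$ of the form $C(1+R)^{j+1}\Lambda(\delta+n^{-1})+CN^{-(3-j)}n^{(d+1)/2}(1+R)^{j+1}\Lambda$ for $j=0,1,2$ (plus, for the Lipschitz bound on $\partial^2_x\varphi$, the analogous third-order statement drawn from the same proof). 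Transporting back multiplies the $j$-th spatial derivative by $(c/R)^j$, which exactly cancels the $(1+R)^j$ growth. Taking $\delta\sim n^{-1}$ and $N$ a suitable power of $n$ (e.g.\ $N\sim n^{d+2}$, so that $m\sim n^{2(d+1)}$, that all the $N^{-(3-j)}n^{(d+1)/2}$ terms are $o(1)$, and that $m\le C_{mm}Nn^d$ can be matched to a prescribed $n_{\mathrm{in}}\sim n^{2(d+1)}$), one gets after padding the hidden layer to exactly $n_{\mathrm{in}}$ units: $\|f-\varphi_f\|_{\cC^0([0,T]\times\bar{B}_d(0,R))}\le C(1+R)n^{-1}\le C(1+R)n_{\mathrm{in}}^{-1/(2(d+1))}$, while the $\cC^0$ norms of $\nabla^{(j)}_\xi\tilde f_\delta$ rescale to $O(1)$ and the trigonometric errors $E_n(\partial^{(j)}_\xi\tilde f_\delta)$ to $O(R/n)$, so that $\mathrm{Lip}(\varphi_f)$, $\mathrm{Lip}(\partial_x\varphi_f)$ and $\mathrm{Lip}(\partial^2_{x,x}\varphi_f)$ are all $\le C(1+R/n)\le C(1+Rn_{\mathrm{in}}^{-1/(2(d+1))})$.

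\emph{Main obstacle.} The $\cC^0$ estimate is routine once the setup is fixed; the delicate part is the control of the Lipschitz constants of the derivatives of the network. Two mechanisms make it work: the $(c/R)^j$ scaling from the change of variables offsets the $(1+R)^j$ blow-up of the derivatives of $\tilde f$, and one never differentiates beyond the regularity $f$ genuinely possesses (order $2$ in $x$, order $1$ in $t$), so no negative power of the mollification scale $\delta$ enters the bounds that matter. The genuinely subtle step is that Theorem~\ref{th:MM} as quoted supplies derivative approximations only up to second order, whereas bounding $\mathrm{Lip}(\partial^2_{x,x}\varphi_f)$ effectively requires a third-order statement; handling it forces one to look inside the Mhaskar--Micchelli construction (this is where the $\cC^3$ regularity of $\psi$, i.e.\ the finiteness of $\|\psi'''\|_{\cC^0}$, is used) rather than to invoke Theorem~\ref{th:MM} as a black box.
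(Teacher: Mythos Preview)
Your approach is essentially the paper's: regularize, transport to the torus with a cutoff, apply Jackson together with the Mhaskar--Micchelli estimate (Theorem~\ref{th:MM}), and transport back, the $(R')^{j}$ growth of the $j$-th derivatives on the torus being cancelled by the $(1/R')^{j}$ from the inverse rescaling. The differences are cosmetic---the paper convolves with a Gaussian on $\RR^{d+1}$ \emph{before} cutting off and rescaling (rather than mollifying on the torus after), and takes $N=n^{1+(d+1)/2}$ rather than your $N\sim n^{d+2}$, both choices yielding a hidden layer of size $\sim n^{2(d+1)}$---and your closing remark is well taken: the paper's written proof in fact stops after bounding $\|\nabla_{(t,x)}\varphi_f\|_{\cC^0}$ and $\|\nabla_{(t,x)}\partial_{x_i}\varphi_f\|_{\cC^0}$, so it too leaves the claimed Lipschitz bound on $\partial^2_{x,x}\varphi_f$ implicit.
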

We stress that the constant $C$ in the above statement does not depend on $R$.

\begin{remark}
The exponent $-1/(2(d+1))$ in the statement of the proposition is what is blamed for the so-called curse of dimensionality. This is due to the fact that we want a guarantee on the Lipschitz constant of the approximating neural network in terms of the Lipschitz constant of the optimal control, and hence to the best of our knowledge, the current state of the literature does not provide results allowing us to overcome at the theoretical level,  the curse of dimensionality in our setting. 
\end{remark}

\begin{remark}
\label{re:C2+C3}
	The Lipschitz continuity of $\partial_{x_i, x_j} \ff$ and the bound on its Lipschitz constant will only be used in the analysis of one of the contributions of the Euler scheme to the error due to the time discretization (see Section~\ref{SEC:APPROX-DISCRETETIME}). There, in order to apply the above result to the optimal control $\hat\bv$ we shall require the extra Assumptions~\ref{hyp:euler-extra-assumption-b-sigma-f} - \ref{hyp:euler-extra-assumption-gradAlpha} added to the standing assumptions in Apppendix \ref{ap:assumptions}. We stress that these assumptions can be omitted if one is not interested in the analysis of the error due to time discretization.
\end{remark}

\begin{proof}
Let $\ff \in \cC(R, K, L_1, L_2, L_3)$ be as in the statement. 
Proving the desired bound for each component of $\ff$ separately, we may assume that $k=1$ without any loss of generality.

Let $n_0 = \lceil C_{mm} 2^{2(d+1)} \rceil$, and fix the integers $n_{\mathrm{in}}$ and $n$ so that $n_{\mathrm{in}} \ge n_0$ and $n = \lfloor (n_{\mathrm{in}}/C_{mm})^{1/(2(d+1))} \rfloor$. As a result, 
\begin{equation}
\label{eq:nin_n}
	C_{mm} n^{2(d+1)} \le n_{\mathrm{in}},
	\qquad \hbox{ and }
	\qquad n^{-1} \le 2 \left( n_{\mathrm{in}} / C_{mm} \right)^{-1/(2(d+1))}. 
\end{equation}
For $\sigma>0$, we denote by $g_\sigma$ the density of the mean-zero Gaussian distribution on $\RR^{d+1}$ with variance/covariance matrix $\sigma^2 I_{d+1}$ where $I_{d+1}$ denotes the $(d+1)\times(d+1)$ identity matrix, and we define $\tilde {\ff}=\ff*g_\sigma: \RR^{d+1} \to \RR$ which is $\cC^\infty(\RR^{d+1})$. We choose $\sigma>0$ small enough to ensure that 
$$
	\|\ff - \tilde {\ff}\|_{\cC^0([0,T] \times \bar{B}_d(0,R))}\le n^{-1},
	\quad
	\| \tilde {\ff} \|_{\cC^0([0,T] \times \bar{B}_d(0,R))} \le K,
$$
$$
	\| \nabla_{(t,x)} \tilde {\ff} \|_{\cC^0([0,T] \times \bar{B}_d(0,R); \RR^{d})} \le L_1,
$$
and for $i,j=1,\ldots,d$,
$$
	\| \nabla_{(t,x)} \partial_{x_i}\tilde {\ff} \|_{\cC^0([0,T] \times \bar{B}_d(0,R); \RR^{d})} \le L_2, 
$$
and
$$
	\| \nabla_{(t,x)} \partial_{x_i, x_j}\tilde {\ff} \|_{\cC^0([0,T] \times \bar{B}_d(0,R); \RR^{d})} \le L_3.
$$
We then choose a constant $K_0$ depending only on $T$ and $L_2$, and a function $\zeta:[0,\infty)\to [0,1]$ with the following properties: $\zeta$ is $\cC^\infty$ and non-increasing, and there exists $R' \in (\sqrt{T^2+R^2}+1, K_0(1+R))$ such that $\zeta(r)=1$ if $r \le\sqrt{T^2+R^2}$ and $\zeta(r)=0$ if $r>R'-1$, and $|\zeta'(r)|\le L_2, |\zeta''(r)|\le L_3$ for all $r\ge 0$. 
We now consider the function $\xi$ defined on $[-R', R']^{d+1}$ by $\xi(t,x)=\tilde {\ff}(t,x)\zeta(|(t,x)|)$. Note that $\xi\in \cC^\infty$, that $\xi(t,x)$ coincides with $\tilde \ff(t,x)$ for $t \in (0,T) $ and $|x|\le R$, and that  
$\xi(t,x)=0$ if $|(t,x)|\ge R'-1$.
In particular, we have
\begin{align}
\label{eq:approx-f-tildef-xi}
	\|\ff - \xi\|_{\cC^0([0,T] \times \bar{B}_d(0,R))}
	= \|\ff - \tilde {\ff}\|_{\cC^0([0,T] \times \bar{B}_d(0,R))}
	\le n^{-1}, 
	\\
\label{eq:approx-nabla-f-tildef-xi}
	\| \nabla_{(t,x)} \xi \|_{\cC^0([0,T] \times \bar{B}_d(0,R); \RR^{d+1})}
	= \| \nabla_{(t,x)} \tilde {\ff} \|_{\cC^0([0,T] \times \bar{B}_d(0,R); \RR^{d+1})}
	\le L_1,
	\\
\label{eq:approx-nabla-partial-f-tildef-xi}
	\| \nabla_{(t,x)} \partial_{x_i} \xi \|_{\cC^0([0,T] \times \bar{B}_d(0,R); \RR^{d+1})}
	= \| \nabla_{(t,x)} \partial_{x_i} \tilde {\ff} \|_{\cC^0([0,T] \times \bar{B}_d(0,R); \RR^{d+1})}
	\le L_2, %
	\\
\label{eq:approx-nabla-partial2-f-tildef-xi}
	\| \nabla_{(t,x)} \partial_{x_i,x_j} \xi \|_{\cC^0([0,T] \times \bar{B}_d(0,R); \RR^{d+1})}
	= \| \nabla_{(t,x)} \partial_{x_i,x_j} \tilde {\ff} \|_{\cC^0([0,T] \times \bar{B}_d(0,R); \RR^{d+1})}
	\le L_3, %
\end{align}
for all $i,j=1,\dots,d.$
We now apply the result of Theorem \ref{th:MM} to the function $\tilde \xi$ defined by
$$
	[-\pi, \pi]^{d+1} \ni (t,x) \mapsto \tilde\xi(t,x) := \xi\left( R' t/\pi , R' x/\pi \right)
$$ 
extended into a $2\pi$-periodic function in each variable, and we bound from above the right hand sides of~\eqref{eq:MhaskarMicchelli-bdd-f},~\eqref{eq:MhaskarMicchelli-bdd-grad-f} and~\eqref{eq:MhaskarMicchelli-bdd-grad2-f}. We use a Jackson type result, namely the fact that the trigonometric degree of approximation of a function of class $\cC^r$,  that is $r$ times continuously differentiable, is of order $O(n^{-r})$ when using polynomials of degree at most $n$. More precisely, by~\cite[Theorem 4.3]{MR0251410}, if $F:\RR^k \to \RR$ is an $r$-times continuously differentiable function which is $2\pi$-periodic in each variable, then for every positive integer $m$, there exists a trigonometric polynomial $P_m$ of degree at most $m$ such that 
$$
	|F(x) - P_m(x)| \leq C m^{-r} \left(\sum_{i=1}^k (M_r)^{k-i} \omega_i\left(\frac{\partial^r F}{\partial x_i^r}; \frac{1}{m}\right)\right), \qquad x \in [-\pi, \pi]^k,
$$
where $C$ is an absolute constant, $M_r = \max_{1 \le s \le r+1} \begin{pmatrix} r+1 \\ s \end{pmatrix}$, and $\omega_i$ is the modulus of continuity defined, for a function $F \in \cC^0(\RR^k)$ and $h>0$, as 
$$
	\omega_i (F; h) = \max_{x \in \RR^k,\, x'\in\RR\,|x_i - x'| \le h} |F(x_1, \dots, x_i, \dots, x_k) - F(x_1, \dots, x', \dots, x_k)|.
$$
In particular, if $F$ is $L-$Lipschitz, then $\omega_i (F; h) \le L h $ and  $\omega_i (F; h) \le 2\|F\|_\infty $ when $F$ is merely bounded.

We apply this result to $F = \tilde \xi$, $F = \partial_{x_i} \tilde \xi(x)$ and $F = \partial_{x_i,x_j} \tilde \xi(x)$, with $m=n$, $k = d+1$ and $r = 1$, since $\tilde \xi$ is at least of class $\cC^{2}$. Note that, by the definition of $\tilde \xi$, by~\eqref{eq:approx-nabla-f-tildef-xi},~\eqref{eq:approx-nabla-partial-f-tildef-xi}  and~\eqref{eq:approx-nabla-partial2-f-tildef-xi}, $\tilde \xi$, $\partial_{x_i} \tilde \xi$  and $\partial_{x_i,x_j} \tilde \xi$ have derivatives with respect to all variables which are uniformly bounded from above by $\frac{R'}{\pi} L_1$, $\left(\frac{R'}{\pi}\right)^2 L_2$  and $\left(\frac{R'}{\pi}\right)^3 L_3$ respectively. 
Thus, we obtain
$$
	E^{d+1}_n( \tilde \xi )\le c_0 R' n^{-1},
	\quad
	E^{d+1}_n( \partial_i \tilde \xi )\le c_0 (R')^2 n^{-1},
	\quad
	E^{d+1}_n( \partial_{i,j} \tilde \xi )\le c_0 (R')^3 n^{-1},
$$
where $c_0$ depends only on $L_1$, $L_2$, $L_3$ and $d$. Note that we could have expected an upper bound of the order of $n^{-2}$ but this does not seem achievable with the above arguments because we do not have a control of the modulus of continuity of $\partial_t\xi$ since we do not know that $\xi$ is twice differentiable in time.
Moreover, applying~\cite[Theorem 4.3]{MR0251410} to $\psi$, $\psi'$ and $\psi''$, which are $\cC^1$, we obtain that for any integer $N$, 
$$
	E^1_N( \psi)\le c_1 N^{-1},
	\qquad
	E^1_N( \psi' )\le c_1 N^{-1},
	\qquad\text{and}\qquad
	E^1_N( \psi'' )\le c_1 N^{-1},
$$
where $c_1$ depends only on $\|\psi'\|_{\cC^0(\TT)}$, $\|\psi''\|_{\cC^0(\TT)}$ and $\|\psi'''\|_{\cC^0(\TT)}$. 
We use the above inequalities with $N = n^{1+(d+1)/2}$, so that $N^{-1} n^{(d+1) / 2} = n^{-1}$.
By Theorem~\ref{th:MM}, we obtain that
there exist $\tilde n_{\mathrm{in}} \le C_{mm} N n^{d+1}$ and $\tilde \varphi\in \bN^\psi_{d+1, \tilde n_{\mathrm{in}}, 1}$ such that, 
	\begin{align*}
		\|\tilde \xi - \tilde \varphi\|_{\cC^0(\TT^{d+1})} &\leq c_2 (1 + R') n^{-1},
		\\
		\|\partial_{x_i} \tilde \xi - \partial_{x_i} \tilde \varphi\|_{\cC^0(\TT^{d+1})} &\leq c_2 (1 + (R')^2) n^{-1}, \qquad i=1,\dots,d,
		\\
		\|\partial_{x_i,x_j} \tilde \xi - \partial_{x_i,x_j} \tilde \varphi\|_{\cC^0(\TT^{d+1})} 
		&
		\leq c_2 (1 + (R')^3) n^{-1}, \qquad i,j=1,\dots,d,
	\end{align*}
where $c_2$ depends only on $\hat \psi(1)$, $d$, $K$, $c_0$ and $c_1$. Notice that, by our choice of $N$ and~\eqref{eq:nin_n}, it holds $\tilde n_{\mathrm{in}} \le C_{mm} N n^{d+1} \le C_{mm} n^{2(d+1)} \le n_{\mathrm{in}}$ so that, up to adding dummy neurons to $\tilde \varphi$, we obtain the existence of $\varphi\in \bN^\psi_{d+1, n_{\mathrm{in}}, 1}$ such that, 
	\begin{align}
		\label{eq:bdd-tildexi-varphi}
		\|\tilde \xi - \varphi\|_{\cC^0(\TT^{d+1})} &\leq c_2(1 + R') n^{-1},
		\\
		\label{eq:bdd-grad-tildexi-varphi}
		\|\nabla_{(t,x)} \tilde \xi - \nabla_{(t,x)} \varphi\|_{\cC^0(\TT^{d+1}; \RR^{d})} &\leq c_2 (1 + (R')^2) n^{-1},
		\\
		\label{eq:bdd-grad2-tildexi-varphi}
		\|\nabla_{(t,x)} \partial_{x_i} \tilde \xi - \nabla_{(t,x)} \partial_{x_i} \varphi\|_{\cC^0(\TT^{d+1}; \RR^{d})} 
		&
		\leq c_2 (1 + (R')^3) n^{-1}, \qquad i=1,\dots,d.
	\end{align}
	
	Consider the function $\varphi_{\ff}$ defined by $\varphi_{\ff}: [0,T] \times \bar{B}_d(0,R) \ni (t,x) \mapsto \varphi (\pi t / R', \pi x / R' )$. Combining~\eqref{eq:approx-f-tildef-xi},~\eqref{eq:bdd-tildexi-varphi} and~\eqref{eq:nin_n}, yields
	\begin{align*}
		\|\ff - \varphi_{\ff} \|_{\cC^0([0,T] \times \bar{B}_{d}(0,R))}
		&\le
		\| \ff - \xi \|_{\cC^0([0,T] \times \bar{B}_{d}(0,R))}
		+ \| \xi - \varphi_{\ff} \|_{\cC^0([0,T] \times \bar{B}_{d}(0,R))}
		\\
		&\le
		\| \ff - \xi \|_{\cC^0([0,T] \times \bar{B}_{d}(0,R))}
		+ \| \tilde \xi - \varphi \|_{\cC^0(\TT^{d+1})}
		\\
		&\le n^{-1} + c_2(1 + R') n^{-1}
		\\
		 &\le c_3 (1 + R) n_{\mathrm{in}}^{-1/(2(d+1))},
	\end{align*}
where $c_3$ depends only on $c_2, C_{mm}, d$ and $K_0$. Moreover, from the definition of $\varphi_{\ff}$, ~\eqref{eq:bdd-grad-tildexi-varphi}, and~\eqref{eq:approx-nabla-f-tildef-xi}, we obtain
	\begin{align*}
		&\| \nabla_{(t,x)} \varphi_{\ff} \|_{\cC^0( [0,T] \times \bar{B}_{d}(0,R); \RR^{d})}  
		\\
		&= \frac{\pi}{R'} \| \nabla_{(t,x)} \varphi \|_{\cC^0([0,\pi T / R'] \times \bar{B}_{d}(0, \pi R/ R'); \RR^{d})} 
		\\
		&\le \frac{\pi}{R'} \left(  \|\nabla_{(t,x)} \tilde \xi  \|_{\cC^0([0,\pi T / R'] \times \bar{B}_{d}(0, \pi R/ R'); \RR^{d})} + c_2 (1 + (R')^2) n^{-1}  \right)  
		\\
		&= \frac{\pi}{R'} \left( \frac{R'}{\pi} \|\nabla_{(t,x)} \xi  \|_{\cC^0([0, T] \times \bar{B}_{d}(0, R); \RR^{d})} + c_2 (1 + (R')^2) n^{-1}  \right)  
		\\
		&\le c_4 \left(1 + R n_{\mathrm{in}}^{-1/(2(d+1))} \right),
	\end{align*}
where $c_4$ depends only on $c_2$, $L_1$, $C_{mm}, d$ and $K_0$. 
	Similarly, for each $i=1\dots,d$, from the definition of $\varphi_{\ff}$, ~\eqref{eq:bdd-grad2-tildexi-varphi}, and~\eqref{eq:approx-nabla-partial-f-tildef-xi}, we obtain
	\begin{align*}
		&\| \nabla_{(t,x)} \partial_{x_i} \varphi_{\ff} \|_{\cC^0( [0,T] \times \bar{B}_{d}(0,R); \RR^{d})}  
		\\
		&= \left(\frac{\pi}{R'}\right)^2 \| \nabla_{(t,x)} \partial_{x_i} \varphi \|_{\cC^0([0,\pi T / R'] \times \bar{B}_{d}(0, \pi R/ R'); \RR^{d})} 
		\\
		&\le \left(\frac{\pi}{R'}\right)^2 \left(  \|\nabla_{(t,x)} \partial_{x_i} \tilde \xi  \|_{\cC^0([0,\pi T / R'] \times \bar{B}_{d}(0, \pi R/ R'); \RR^{d})} + c_2 (1 + (R')^3) n^{-1}  \right)  
		\\
		&= \left(\frac{\pi}{R'}\right)^2 \left( \left(\frac{R'}{\pi}\right)^2 \|\nabla_{(t,x)} \partial_{x_i} \xi  \|_{\cC^0([0, T] \times \bar{B}_{d}(0, R); \RR^{d})} + c_2 (1 + (R')^3) n^{-1}  \right)  
		\\
		&\le c_5 \left(1 + R n_{\mathrm{in}}^{-1/(2(d+1))} \right),
	\end{align*}
where $c_5$  depends only on $c_2$, $L_3$, $C_{mm}, d$ and $K_0$.
\end{proof}

We then turn our attention to the variations of the objective functional when evaluated at two controls that are similar on a compact set. Since, under our assumptions, both the (approximately optimal) feedback control $\hat\bv$ and the control $\hat\bvarphi$ produced by the neural network are Lipschitz continuous, we restrict our attention to feedback controls with this regularity.
The following proposition provides a control of the variation of the objective function when approximating a feedback control on a compact set.

\begin{proposition}
\label{lem:var-J-feedback-compact}
	Let $\bv$ and $\bw \in \VV$ be two Lipschitz continuous feedback controls (i.e., Lipschitz continuous functions of $(t,x)$). There exists a constant $C$ depending only on the data of the problem and the Lipschitz constant of the controls $\bv$ and $\bw$ such that for all $\Gamma > 0$ and $R > 0$, if
	\begin{equation}
	\label{eq:varJN-bnd-bv-bw}
		\| \bv_{|[0,T] \times \bar{B}_{d}(0,R)} - \bw_{|[0,T] \times \bar{B}_{d}(0,R)} \|_{\cC^0([0,T] \times \bar{B}_{d}(0,R))} \leq \Gamma \, ,
	\end{equation}
	then
	$
		\left | J^N(\bv) - J^N(\bw) \right | \leq C \left(\Gamma^2 + \frac{1}{R}\right)^{1/2} \, .
	$
\end{proposition}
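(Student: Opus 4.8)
The plan is to run a coupling argument between the two $N$-particle systems. Let $(X^i_t)_{i=1,\dots,N}$ be the solution of the controlled system \eqref{eq:evolXi-closedloop} with feedback $\bv$ and $(Y^i_t)_{i=1,\dots,N}$ the solution with feedback $\bw$, both built on the \emph{same} Wiener processes $\bW^i$ and started from the same $X^i_0 = Y^i_0$; write $\mu^N_t = \frac1N\sum_{j}\delta_{X^j_t}$ and $\nu^N_t = \frac1N\sum_{j}\delta_{Y^j_t}$. By exchangeability of the labels it is enough to estimate $\EE\bigl[\sup_{t\le T}|X^1_t - Y^1_t|^2\bigr]$, and since $J^N(\bv) = \EE\bigl[\int_0^T f(t,X^1_t,\mu^N_t,v(t,X^1_t))\,dt + g(X^1_T,\mu^N_T)\bigr]$ by \eqref{eq:def-JN-closedloop} and symmetry (and similarly for $\bw$), to compare the cost of the tagged agent $i=1$ under the two controls.

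For the state estimate, we subtract the two equations; the drift increment at time $s$ telescopes as
\begin{align*}
	&b(s,X^1_s,\mu^N_s,v(s,X^1_s)) - b(s,Y^1_s,\nu^N_s,w(s,Y^1_s)) \\
	&\qquad = \bigl[b(s,X^1_s,\mu^N_s,v(s,X^1_s)) - b(s,Y^1_s,\nu^N_s,v(s,Y^1_s))\bigr] + \bigl[b(s,Y^1_s,\nu^N_s,v(s,Y^1_s)) - b(s,Y^1_s,\nu^N_s,w(s,Y^1_s))\bigr].
\end{align*}
By the Lipschitz continuity of $b$ in $(x,\mu,\alpha)$ from the standing assumptions and of $\bv$ in $(t,x)$, the first bracket is bounded by $C\bigl(|X^1_s - Y^1_s| + W_2(\mu^N_s,\nu^N_s)\bigr)$, and $W_2(\mu^N_s,\nu^N_s)^2 \le \frac1N\sum_j|X^j_s - Y^j_s|^2$ has expectation $\EE[|X^1_s - Y^1_s|^2]$ by symmetry. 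The second bracket is bounded by $C|v(s,Y^1_s) - w(s,Y^1_s)|$, and this is where the compact set enters: \eqref{eq:varJN-bnd-bv-bw} gives $|v(s,Y^1_s) - w(s,Y^1_s)| \le \Gamma$ on $\{Y^1_s \in \bar{B}_d(0,R)\}$, while $|v(s,Y^1_s) - w(s,Y^1_s)| \le C(1+|Y^1_s|)$ everywhere by the (at most linear) growth of the Lipschitz feedbacks $\bv,\bw$, so that $|v(s,Y^1_s) - w(s,Y^1_s)| \le \Gamma + C(1+|Y^1_s|)\indic_{\{|Y^1_s|>R\}}$. Invoking the standard uniform-in-$N$ moment bound $\sup_N \sup_{s\le T}\EE[|Y^1_s|^4] < \infty$ (which holds because $\mu_0 \in \cP_4(\RR^d)$ and $b,\sigma,\bw$ have linear growth), Markov's inequality and Cauchy--Schwarz give $\EE\bigl[(1+|Y^1_s|)^2\indic_{\{|Y^1_s|>R\}}\bigr] \le C/R$. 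Treating the diffusion increment, which does not involve the control, with the Burkholder--Davis--Gundy inequality and the Lipschitz continuity of $\sigma$, and feeding everything into Gronwall's inequality, we obtain
$$
	\EE\Bigl[\sup_{t\le T}|X^1_t - Y^1_t|^2\Bigr] \le C\Bigl(\Gamma^2 + \tfrac1R\Bigr),
$$
with $C$ depending only on the data of the problem, $T$, and the common Lipschitz constant of $\bv,\bw$; by exchangeability the same holds for every $i$, and hence $\sup_{t\le T}\EE[W_2(\mu^N_t,\nu^N_t)^2] \le C(\Gamma^2 + 1/R)$ as well.

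It then remains to transfer this bound to the costs. Write $J^N(\bv) - J^N(\bw) = \int_0^T \EE\bigl[f(t,X^1_t,\mu^N_t,v(t,X^1_t)) - f(t,Y^1_t,\nu^N_t,w(t,Y^1_t))\bigr]dt + \EE\bigl[g(X^1_T,\mu^N_T) - g(Y^1_T,\nu^N_T)\bigr]$. Under the standing assumptions $f$ and $g$ are locally Lipschitz with a modulus of linear growth in $(x,\mu,\alpha)$, so each integrand is bounded by $C\,\mathcal{G}_t\,\mathcal{D}_t$, with $\mathcal{G}_t := 1 + |X^1_t| + |Y^1_t| + M_2(\mu^N_t) + M_2(\nu^N_t) + |v(t,X^1_t)| + |w(t,Y^1_t)|$ and $\mathcal{D}_t := |X^1_t - Y^1_t| + W_2(\mu^N_t,\nu^N_t) + |v(t,X^1_t) - w(t,Y^1_t)|$. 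By Cauchy--Schwarz, $\EE[\mathcal{G}_t\mathcal{D}_t] \le \EE[\mathcal{G}_t^2]^{1/2}\EE[\mathcal{D}_t^2]^{1/2}$: the factor $\EE[\mathcal{G}_t^2]$ is bounded uniformly in $N$ and $t$ by the second-moment estimates, while $\EE[\mathcal{D}_t^2] \le C(\Gamma^2 + 1/R)$ by the previous step, after writing $|v(t,X^1_t) - w(t,Y^1_t)| \le L|X^1_t - Y^1_t| + |v(t,Y^1_t) - w(t,Y^1_t)|$ and estimating the last term exactly as above. The terminal term is handled in the same way. Integrating over $[0,T]$ yields $|J^N(\bv) - J^N(\bw)| \le C(\Gamma^2 + 1/R)^{1/2}$.

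The coupling/Gronwall argument and the Cauchy--Schwarz bookkeeping are routine; the delicate point is the region outside $\bar{B}_d(0,R)$, where $\bv$ and $\bw$ are a priori unrelated. There one must use uniform-in-$N$ moment bounds for the particle system, available here thanks to $\mu_0 \in \cP_4(\RR^d)$, and combine Markov's inequality with Cauchy--Schwarz so that the off-ball contribution decays like $1/R$ rather than merely like $\PP(|Y^1_t| > R)$ times the (a priori unbounded) size of the cost there. A secondary subtlety is that $f$ and $g$ are only locally Lipschitz, which forces the splitting of each cost increment into a square-integrable \emph{magnitude} factor $\mathcal{G}_t$ and a small \emph{increment} factor $\mathcal{D}_t$, keeping the smallness on the increment side.
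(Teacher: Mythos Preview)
Your proposal is correct and follows essentially the same route as the paper's proof: couple the two particle systems on the same noise and initial data, use fourth-moment bounds together with Markov and Cauchy--Schwarz to control the off-ball contribution $|v-w|$ by $C/R$, close the state estimate with Gronwall, and then pass to the costs via the local Lipschitz bounds on $f,g$ and another Cauchy--Schwarz. The only cosmetic differences are that the paper averages $\frac1N\sum_i$ throughout rather than reducing to a tagged particle by exchangeability, and that it splits on the event $\{\sup_{t\le T}|X^{i,\bv}_t|\le R\}$ rather than the pointwise event $\{|Y^1_s|\le R\}$.
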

	The arguments required for the proof are rather standard. However, we could not find a close enough version of this result in the literature. So for the sake of completeness we provide the proof in Appendix~\ref{app:proof-approx-NN}.

\begin{proof}[Proof of Proposition \ref{prop:approx-NN-cmpJ-epsilon}]
Let $n_{\mathrm{in}}\ge 1$ be an integer, and let $R = n_{\mathrm{in}}^{1/(3(d+1))}$.  
As explained in Subsection \ref{sub:assumptions} on our standing assumptions, the function $(t,x) \mapsto \hat{\bv}(t,x)$ defined by~\eqref{fo:vhat} is in the class $\cC(R, K, L_1, L_2, L_3)$ for some constants $K, L_1, L_2, L_3$ depending only on the data of the problem. So by Proposition~\ref{prop:approx-fct-NN} applied to $\ff = \hat{\bv}$, there exists a neural network $\hat{\bvarphi} \in \bN^{\psi}_{d+1, n_{\mathrm{in}}, k}$, such that
	$
		\| \hat{\bv} - \hat{\bvarphi} \|_{\cC^0([0,T] \times \bar{B}_{d}(0,R))}  \le \Gamma,
	$
with $\Gamma = \Gamma(R, n_{\mathrm{in}}) = C_0  (1 + R) n_{\mathrm{in}}^{-1/(2(d+1))}$ for some $C_0$ depending only on the data of the problem and on $\hat\psi(1), \|\psi'\|_{\cC^0}, \|\psi''\|_{\cC^0}, \|\psi'''\|_{\cC^0}$.
Moreover, still by Proposition~\ref{prop:approx-fct-NN}, the Lipschitz constants of $\hat\bvarphi$  and $\partial_{x_i} \hat\bvarphi$ can be bounded from above by $C_0  (1 + R n_{\mathrm{in}}^{-1/(2(d+1))})$, which is itself bounded by $2 C_0$ thanks to our choice of $R$.
  	
Next, applying Proposition~\ref{lem:var-J-feedback-compact} yields
 	$$
	J^N(\hat{\bv}) \geq J^N(\hat{\bvarphi}) - C_2 \left(\Gamma^2 + \frac{1}{R}\right)^{1/2} \,,
	$$
	for some constant $C_2$ depending only on the data of the problem and $C_0$.
 Recalling that $R = n_{\mathrm{in}}^{1/(3(d+1))}$,
 \begin{align*}
 	\Gamma^2 + \frac{1}{R} 
	&= C_0^2  \left(1 + n_{\mathrm{in}}^{1/(3(d+1))}  \right)^2 n_{\mathrm{in}}^{-1/(d+1)} + n_{\mathrm{in}}^{-1/3((d+1))} 
	\\
	&\le C_1 n_{\mathrm{in}}^{-1/(3(d+1))},
 \end{align*}
where $C_1$ depends only on $C_0$. This completes the proof.
\end{proof}

\subsection{Problem in discrete time}
\label{SEC:APPROX-DISCRETETIME}

We now prove that the objective values of the discrete and continuous time problems (respectively Problem~\ref{pb:discrete-MKV} and Problem~\ref{pb:MKV-Nagents} when minimizing over neural networks) are close. 

One possibility is to view the interacting particle dynamics as a system of SDEs in dimension $N \times d$ and apply known results about the Euler scheme for systems of SDEs. However such results usually involve constants which depend upon the dimension. This is unsatisfactory as we want the constants to be independent of $N$.

Viewing the problem as the optimal control of a finite number of particles is reminiscent of the particle method for MKV equations studied by Bossy and Talay e.g.~\cite{MR1370849}. However their bound for the error induced by the time discretization (see~\cite[Lemma 2.7]{MR1370849}) is not applicable directly to our setting. 

For these reasons and for the sake of completeness, we provide in Appendix~\ref{app:proof-discreteT} a detailed proof, which relies on the following estimate for the strong error rate also proved in the appendix under the assumptions and notations we now specify. $\varphi$ is a Lipschitz continuous feedback control function of $(t, x)$, and $(X^i_t)_{i=1,\ldots,N,\;0\le t\le T}$ is the $N$-agent continuous-time state process satisfying the MKV dynamics given by equation
\eqref{eq:evolXi-closedloop} in the statement of Problem \ref{pb:MKV-Nagents} with feedback function $\varphi$ instead of $v$. $N_T$ is an integer giving the number of time steps in the subdivision $t_n=nT/N_T$ for $n=0,\ldots,N_T$ and we denote by $\Delta t=T/N_T$ the mesh of this subdivision.
$(\check X^i_{t_n})_{i=1,\ldots,N,\;n=0,\ldots,N_T}$ is the $N$-agent discrete-time state process satisfying the MKV dynamics given by equation
\eqref{eq:discrete-dyn} in the statement of Problem \ref{pb:discrete-MKV} with the current Lipschitz feedback function $\varphi$ (not necessarily given by a neural network). Under these conditions we have

\begin{lemma}[Strong error for the Euler scheme] 
\label{lem:discreteT-strongerr}
For all $n=0,\dots,N_T$,
$$
	 \sum_{i=1}^N \EE \left[ |\check X^i_{t_n} - X^i_{t_n}|^2 \right]
	\leq
	C N\Delta t,
$$
	where $C$ depends only on the data of the problem,  the Lipschitz constant of $\varphi$ and $\varphi(0,0)$.
\end{lemma}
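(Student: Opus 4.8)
The standard approach to a strong-error estimate for the Euler scheme is a discrete Gronwall argument, and I would follow that template, being careful to track the $N$-dependence so that the final bound is $CN\Delta t$ (i.e. $C\Delta t$ per agent). Write $e^i_{t_n} = \check X^i_{t_n} - X^i_{t_n}$. The plan is to compare, for each $i$, the one-step increment of the discrete dynamics \eqref{eq:discrete-dyn} against the exact increment $X^i_{t_{n+1}} - X^i_{t_n} = \int_{t_n}^{t_{n+1}} b(s, X^i_s, \mu^N_s, \varphi(s, X^i_s))\,ds + \int_{t_n}^{t_{n+1}} \sigma(s, X^i_s, \mu^N_s)\,dW^i_s$ obtained from \eqref{eq:evolXi-closedloop}. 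First I would split the difference of the drift terms into (a) a ``frozen-time'' error $b(t_n, \check X^i_{t_n}, \check\mu_{t_n}, \varphi(t_n, \check X^i_{t_n})) - b(t_n, X^i_{t_n}, \mu^N_{t_n}, \varphi(t_n, X^i_{t_n}))$, which is controlled by the Lipschitz continuity of $b$ (in $x$, in the measure argument via $W_2$, and composed with the Lipschitz $\varphi$) by a constant times $|e^i_{t_n}| + W_2(\check\mu_{t_n}, \mu^N_{t_n})$, and (b) a time-regularity error $b(t_n, X^i_{t_n}, \mu^N_{t_n}, \varphi(t_n, X^i_{t_n})) - b(s, X^i_s, \mu^N_s, \varphi(s, X^i_s))$ for $s \in [t_n, t_{n+1}]$. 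The key observation for keeping constants $N$-independent is that $W_2(\check\mu_{t_n}, \mu^N_{t_n})^2 \le \frac1N \sum_{j=1}^N |e^j_{t_n}|^2$, so that after summing over $i$ the interaction terms close up on $\sum_i \EE|e^i_{t_n}|^2$ with no extra factor of $N$.

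Next I would square, take expectations, and sum over $i$. Using $|\sum_{k=1}^m a_k|^2 \le m\sum_k |a_k|^2$, the Cauchy–Schwarz / Jensen inequality on the time integrals (picking up a factor $\Delta t$), and the Itô isometry for the martingale parts (the stochastic increments are independent across $i$ and have mean zero, so their contributions sum cleanly and, being over $[t_n, t_{n+1}]$, carry a factor $\Delta t$), I get a recursion of the form
$$
	S_{n+1} \le (1 + C\Delta t) S_n + C\Delta t \cdot (N \Delta t) + C\,\Delta t \sum_{i=1}^N \EE\!\!\int_{t_n}^{t_{n+1}}\!\!\Big(|X^i_s - X^i_{t_n}|^2 + W_2(\mu^N_s,\mu^N_{t_n})^2\Big)ds,
$$
where $S_n = \sum_{i=1}^N \EE|e^i_{t_n}|^2$. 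The time-increment term $\EE|X^i_s - X^i_{t_n}|^2$ is bounded by $C\Delta t$ uniformly in $i$ and $s$ by standard moment estimates for the continuous $N$-agent SDE \eqref{eq:evolXi-closedloop} — these require a uniform-in-$N$ second moment bound on $X^i_t$, which follows from the linear growth of $b, \sigma$ (in turn from their Lipschitz property plus the linear growth of $\varphi$, hence the dependence of $C$ on $\varphi(0,0)$) together with the assumption $\mu_0 \in \cP_2(\RR^d)$; and $W_2(\mu^N_s, \mu^N_{t_n})^2 \le \frac1N\sum_j |X^j_s - X^j_{t_n}|^2 \le C\Delta t$ as well. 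So the whole inhomogeneous term is $\le CN(\Delta t)^2$, giving $S_{n+1} \le (1+C\Delta t)S_n + CN(\Delta t)^2$.

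Finally, since $S_0 = 0$ (the discrete and continuous processes share the initial condition $\check X^i_0 = X^i_0$), the discrete Gronwall lemma yields $S_n \le CN(\Delta t)^2 \sum_{j=0}^{n-1}(1+C\Delta t)^j \le CN(\Delta t)^2 \cdot \frac{(1+C\Delta t)^{N_T}}{C\Delta t} \le C N \Delta t \, e^{CT}$, which is the claimed bound with a constant depending only on the data of the problem, the Lipschitz constant of $\varphi$, and $\varphi(0,0)$. The main obstacle — and the only genuinely non-routine point — is bookkeeping the $N$-dependence: one must consistently exploit that the empirical-measure Wasserstein distances are dominated by the average of the pairwise errors so that summing over agents does not degrade the estimate, and one must invoke moment bounds for the continuous interacting system that are uniform in $N$. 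Everything else is the classical one-step-consistency-plus-Gronwall machinery, which I would carry out in Appendix~\ref{app:proof-discreteT}.
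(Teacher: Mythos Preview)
Your proposal is correct and follows essentially the same route as the paper's proof: a one-step error recursion built from the Lipschitz bounds on $b$, $\sigma$, and $\varphi$, the empirical-measure estimate $W_2(\check\mu_{t_n},\mu^N_{t_n})^2 \le \frac1N\sum_j|e^j_{t_n}|^2$, the time-regularity bound $\EE|X^i_s - X^i_{t_n}|^2 \le C\Delta t$ (stated in the paper as Lemma~\ref{lem:discreteT-estim-Xitn}), and a discrete Gronwall argument starting from $S_0=0$. The paper additionally opens with a short It\^o-formula computation bounding $\frac1N\sum_i\EE|X^i_t-\check X^i_{t_n}|^2$ for intermediate $t\in[t_n,t_{n+1}]$, but the core recursion $\delta_{n+1}\le (1+C\Delta t)\delta_n + \tilde K\Delta t$ and its resolution are exactly what you describe.
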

Using this estimate, we can prove the following bound.

\begin{proposition}
\label{prop:approx-discreteT}
Assume that  for every $(t,x) \in [0,T] \times \RR^d$,
\begin{align}
\label{eq:hyp-vaphi-Euler}
	|\partial_t\varphi(t,x)| \le C_1 |(t, x)|,
	\qquad
	|\partial_x\varphi(t,x)| \le C_1,
	\qquad 
	|\partial^2_{xx} \varphi(t,x)| \le C_1.
\end{align}
Then there exists a constant $C$ depending only on the data of the problem and on the control $\varphi$ via its Lipschitz constant, its value at $(0,0)$ and  $C_1$ introduced above, such that for all $N$ and $t\in[0,T]$, we have
	$$
	|J^N(\varphi) -  \check J^N(\varphi)|
	\leq C \sqrt{\Delta t}.
	$$	
\end{proposition}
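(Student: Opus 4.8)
The plan is to split $J^N(\varphi) - \check J^N(\varphi)$ into the contribution of the terminal costs and that of the running costs, and to estimate each by inserting intermediate quantities. For the terminal part, $\EE\big[\frac1N\sum_i\big(g(X^i_T,\mu^N_T) - g(\check X^i_{t_{N_T}},\check\mu_{t_{N_T}})\big)\big]$, I would use the Lipschitz continuity of $g$ in $(x,\mu)$ (for $W_2$) together with the elementary coupling bound $W_2(\mu^N_T,\check\mu_{t_{N_T}})^2 \le \frac1N\sum_i|X^i_T - \check X^i_{t_{N_T}}|^2$; this bounds the terminal error by $\frac{C}{N}\sum_i\EE|X^i_T - \check X^i_{t_{N_T}}|$, hence by $C\big(\frac1N\sum_i\EE|X^i_T - \check X^i_{t_{N_T}}|^2\big)^{1/2}$ after Cauchy--Schwarz, which is $O(\sqrt{\Delta t})$ by Lemma~\ref{lem:discreteT-strongerr}. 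If $g$ is only locally Lipschitz with a linearly growing local constant, one first applies Cauchy--Schwarz against uniform-in-$N$ fourth-moment bounds on $X^i_T$ and $\check X^i_{t_{N_T}}$.

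For the running costs, I would write $\int_0^T f(t,X^i_t,\mu^N_t,\varphi(t,X^i_t))\,dt = \sum_{n=0}^{N_T-1}\int_{t_n}^{t_{n+1}} f(t,X^i_t,\mu^N_t,\varphi(t,X^i_t))\,dt$ and, on each slice, compare with $\Delta t\, f(t_n,\check X^i_{t_n},\check\mu_{t_n},\varphi(t_n,\check X^i_{t_n}))$ through the intermediate point $f(t_n,X^i_{t_n},\mu^N_{t_n},\varphi(t_n,X^i_{t_n}))$. The first (``time-regularity'') piece, $\int_{t_n}^{t_{n+1}}\EE\big|f(t,X^i_t,\mu^N_t,\varphi(t,X^i_t)) - f(t_n,X^i_{t_n},\mu^N_{t_n},\varphi(t_n,X^i_{t_n}))\big|\,dt$, is controlled by the (locally) Lipschitz dependence of $f$ on all its arguments, the standard increment estimate $\EE\sup_{s\in[t_n,t_{n+1}]}|X^i_s - X^i_{t_n}|^2 \le C\Delta t$ for the MKV flow, the bound $W_2(\mu^N_t,\mu^N_{t_n})^2 \le \frac1N\sum_j|X^j_t - X^j_{t_n}|^2$, and hypotheses~\eqref{eq:hyp-vaphi-Euler} on $\varphi$ (which give $|\varphi(t,X^i_t) - \varphi(t_n,X^i_{t_n})| \le C_1(T + |X^i_t|)\Delta t + C_1|X^i_t - X^i_{t_n}|$); with uniform moment bounds this slice is $O((\Delta t)^{3/2})$. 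The second (``state-swap'') piece, $\Delta t\,\EE\big|f(t_n,X^i_{t_n},\mu^N_{t_n},\varphi(t_n,X^i_{t_n})) - f(t_n,\check X^i_{t_n},\check\mu_{t_n},\varphi(t_n,\check X^i_{t_n}))\big|$, is bounded, after Lipschitz estimates on $f$ and $\varphi$ in $x$ and the coupling bound $W_2(\mu^N_{t_n},\check\mu_{t_n})^2 \le \frac1N\sum_j|X^j_{t_n} - \check X^j_{t_n}|^2$, by $C\Delta t\,\big(\frac1N\sum_j\EE|X^j_{t_n} - \check X^j_{t_n}|^2\big)^{1/2} = O(\Delta t\sqrt{\Delta t})$ by Lemma~\ref{lem:discreteT-strongerr}. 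Averaging over $i$ and summing the $N_T = T/\Delta t$ slices turns each $O((\Delta t)^{3/2})$ contribution into the claimed $O(\sqrt{\Delta t})$.

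The routine but essential point throughout is uniformity in $N$: every constant must depend only on the data of the problem, the Lipschitz constant and value at $(0,0)$ of $\varphi$, and $C_1$, not on $N$ or $N_T$. This is why one never treats the interacting system as an $Nd$-dimensional SDE but works agent-by-agent, replaces every $W_2$ between empirical measures by the diagonal coupling, and invokes uniform-in-$N$ moment estimates $\sup_n\max_i\big(\EE|X^i_{t_n}|^4 + \EE|\check X^i_{t_n}|^4\big) < \infty$ (valid since $\mu_0\in\cP_4(\RR^d)$ and $b,\sigma,\varphi$ have at most linear growth) to tame the possibly quadratic growth of $f$ and $g$. The main obstacle is precisely this bookkeeping — making the ``local-Lipschitz times moments'' estimates go through while tracking the exact dependence of the constants, and checking that hypotheses~\eqref{eq:hyp-vaphi-Euler} (in particular the bound on $\partial^2_{xx}\varphi$) supply exactly the regularity of the feedback-composed coefficients needed in the consistency estimate; the quantitative heart of the argument, Lemma~\ref{lem:discreteT-strongerr}, is taken as given.
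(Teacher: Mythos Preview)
Your decomposition into terminal plus running contributions, with the running part split into a ``time-regularity'' slice $\int_{t_n}^{t_{n+1}}\EE[f(t,X^i_t,\ldots)-f(t_n,X^i_{t_n},\ldots)]\,dt$ and a ``state-swap'' slice $\Delta t\,\EE[f(t_n,X^i_{t_n},\ldots)-f(t_n,\check X^i_{t_n},\ldots)]$, is exactly the paper's $\delta_g$, $\delta_f^1$, $\delta_f^2$. The treatment of $\delta_g$ and $\delta_f^2$ via the local-Lipschitz bound on $g,f$, the diagonal coupling $W_2^2\le \frac1N\sum_j|X^j-\check X^j|^2$, Cauchy--Schwarz against uniform moments, and Lemma~\ref{lem:discreteT-strongerr} is correct and in fact a bit more elementary than the paper, which instead writes $G(\mu)=\int g(x,\mu)\,d\mu(x)$ (resp.\ $F_n$) and differentiates along a line via the L-derivative $\partial_\mu G$ before applying the same Cauchy--Schwarz closure.

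The genuine divergence is on $\delta_f^1$. The paper explicitly says this piece ``cannot be done'' by the Lipschitz strategy and instead applies It\^o's formula to the empirical projection $f^N(t,x,(x^1,\ldots,x^N))=f(t,x,\frac1N\sum\delta_{x^j},\varphi(t,x))$, expressing the time-$t$/time-$t_n$ difference as an integral of first- and second-order terms; this is precisely where the hypotheses on $\partial^2_{xx}\varphi$ and the second-order bounds on $f$ from Assumption~\ref{hyp:euler-extra-assumption-b-sigma-f} enter, and it yields the sharper intermediate estimate $\delta_f^1\le C\,\Delta t$. Your Lipschitz route is nonetheless correct: using $|\partial_t f|\le C(1+|\Theta|^2)$ (from \ref{hyp:euler-extra-assumption-b-sigma-f}) for the time variable and the local-Lipschitz bound in $(x,\mu,\alpha)$ from \ref{hyp:pontryagin-opt-fg-Lip}, together with $\EE|X^i_t-X^i_{t_n}|^2\le C\,\Delta t$, one gets $\EE|f(t,\ldots)-f(t_n,\ldots)|\le C\sqrt{\Delta t}$ pointwise in $t$, hence $O((\Delta t)^{3/2})$ per slice and $O(\sqrt{\Delta t})$ after summation, which matches the proposition's claim. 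In other words, the paper's ``cannot be done'' is an overstatement if one is content with the order $\sqrt{\Delta t}$ actually stated; your approach is simpler and, notably, does not use the $\partial^2_{xx}\varphi$ bound at all (despite your mention of it), whereas the It\^o route buys the better $O(\Delta t)$ for $\delta_f^1$ at the price of the second-order hypotheses, without improving the final rate since $\delta_g,\delta_f^2$ are bottlenecked at $\sqrt{\Delta t}$ by Lemma~\ref{lem:discreteT-strongerr}. A minor remark: with the linear local-Lipschitz constants of \ref{hyp:pontryagin-opt-fg-Lip}, second moments suffice in the Cauchy--Schwarz step; fourth moments are not needed there (they are used elsewhere to control $\PP(\bar\cE_R)$-type events, not in this proof).
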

In particular, for the same $\hat\bvarphi$ as the one constructed in Proposition~\ref{prop:approx-NN-cmpJ-epsilon}, the constant $C$ depends only on the data of the problem and on the activation function through $\hat\psi(1)$, $\|\psi'\|_{\cC^0(\TT)}$, $\|\psi''\|_{\cC^0(\TT)}$  and $\|\psi'''\|_{\cC^0(\TT)}$.

\section{Two numerical methods}
\label{sec:two-num-meth}

In this section, we describe two numerical methods for mean field type problems. The first method is a \emph{straightfoward} implementation of the optimization problem underpinning the optimal control of MKV dynamics. The second method is geared to the solution of forward-backward systems of stochastic differential equations of the McKean-Vlasov type (MKV FBSDEs). As a result, it can be applied to the solutions of both mean field control (MFC) problems and mean field games (MFGs).

\subsection{Method 1: minimization of the MFC cost function}
\label{sec:method-1}

The goal is to minimize $\check J^N$ defined by~\eqref{eq:def-checkJN} by searching for the right function $\bvarphi$ in a family of functions $x\mapsto \bvarphi_\theta(x)$ parameterized by $\theta\in\Theta$, the desired parameter $\theta^*$ minimizing the functional:
$
	\theta \mapsto \JJ^N(\theta) = \check J^N(\bvarphi_\theta).
$
This cost function can be interpreted as a loss function and viewed as an expectation. Its minimization is \emph{screaming} for the use of the Robbins-Monro procedure. Moreover, if we use the family $(\varphi_\theta)_{\theta\in\Theta}$ given by a feed-forward neural network, this minimization can be implemented efficiently with the powerful tools based on the so-called Stochastic Gradient Descent (SGD) developed for the purpose of machine learning. Recall the notation $\bN^\psi_{d_0, \dots, d_{\ell+1}}$ introduced in \eqref{fo:N_psi} for a set of neural networks with a specific architecture. In the implementation of the minimization, we replace the expectation in~\eqref{eq:def-checkJN} by an empirical average over a finite number of sample populations each of size $N$. These populations can be interpreted as mini-batches (in which the samples are not independent). At each iteration $m = 1, 2, \dots,$ of the SGD, we denote by $\theta_m$ the parameters of the neural network at the current iteration, and we pick $N$ samples $x^{i}_0,$ $i=1,\dots,N,$ i.i.d. with distribution $\mu_0$, and $N \times N_T$ samples $\epsilon^{i}_{n},$ $i=1,\dots,N, 0 = 1, \dots, N_T-1$, i.i.d. with distribution $\cN(0, \sqrt{\Delta t})$. We then consider the following loss function which depends on the sample $S = ((x^{i}_0)_{i=1,\dots,N}, (\epsilon^{i}_n)_{i=1,\dots,N, n=0,\dots,N_T-1})$:
\begin{equation}
\label{eq:bbJ-S-thetam}
		\JJ^N_{S}(\theta_{m})
		= 
		\frac{1}{N}\sum_{i=1}^N \sum_{n=0}^{N_T-1} f\left(t_n, \check X^i_{t_n}, \check\mu_{t_n}, \varphi_{\theta_m}(t_n, \check X^i_{t_n})\right) \Delta t + g\left(\check X^i_{t_{N_T}},\check\mu_{t_{N_T}}\right) \, ,
	\end{equation}
	where $\check\mu_{t_n} = \frac{1}{N} \sum_{i=1}^N \delta_{\check X^i_{t_n}}$, under the constraints:
	\begin{equation*}
		\check X^i_{t_{n+1}} = \check X^i_{t_n} + b\left(t_n, \check X^i_{t_n}, \check\mu_{t_n}, \varphi_{\theta_m}(t_n, \check X^i_{t_n})\right) \Delta t + \sigma\left(t_n, \check X^i_{t_n}, \check\mu_{t_n}\right) \epsilon^i_n  \, ,
	\end{equation*}
	for $n \in \{0,\dots, N_T-1\}, i \in \{1,\dots,N\}$, with $(\check X^i_0)_{i \in \{1, \dots, N\}} = (x^i_0)_{i \in \{1, \dots, N\}}$  where $x^i_0,\; i=1,\dots,N,$ are i.i.d. with distribution $\mu_0$. Recall that the initial distribution $\mu_0$ as well as the distributions of the noise terms are parts of the data defining the problem.

\begin{algorithm}
\DontPrintSemicolon
\KwData{An initial parameter $\theta_0\in\Theta$.  A number of iterations $M$. A sequence $(\beta_m)_{m = 0, \dots, M-1}$ of learning rates.}
\KwResult{Approximation of $\theta^*$}
\Begin{
  \For{$m = 0, 1, 2, \dots, M-1$}{
    Pick $S = ((x^{i}_0)_{i=1,\dots,N}, (\epsilon^{i}_n)_{i=1,\dots,N, n=1,\dots,N_T})$ where $x^i_0$ are i.i.d. with distribution $\mu_0$, and $\epsilon^{i}_n$ are i.i.d. with distribution $\cN(0, \sqrt{\Delta t})$\;
    Compute the gradient $\nabla \JJ^N_{S}(\theta_{m})$ of $\JJ^N_{S}(\theta_{m})$ defined by~\eqref{eq:bbJ-S-thetam} \;
    Set $\theta_{m+1} =  \theta_{m} -\beta_m \nabla \JJ^N_{S}(\theta_{m})$ \;
    }
  \KwRet{ $\theta_{M}$}
  }
\caption{SGD for Mean Field Control\label{algo:SGD-MFC}}
\end{algorithm}

Details on the implementation and numerical examples are given in Section \ref{sec:numres}

\subsection{Method 2: MKV FBSDE}
\label{eq:method2-FBSDE}

We now propose a method for a general system of forward-backward SDEs of McKean-Vlasov type (or MKV FBSDE for short). For simplicity, we restrict our attention to the case of a non-controlled volatility. Such a system then takes the following generic form
\begin{equation}
\label{eq:MKV-FBSDE-general}
\left\{
\begin{aligned}
	d X_t
	=
	& B\left(t, X_t, \cL(X_t), Y_t \right) dt
		+  \sigma\left(t, X_t, \cL(X_t) \right) d W_t,
	\\
	d Y_t
	=
	& - F\left(t, X_t, \cL(X_t), Y_t, \sigma\left(t, X_t, \cL(X_t) \right)^\dagger Z_t \right) dt
		+ Z_t d W_t,
\end{aligned}
\right.
\end{equation}
with initial condition $X_0 = \xi \in L^2(\Omega, \cF_0, \PP; \RR^d)$ and terminal condition $Y_T = G(X_T, \cL(X_T))$. The system~\eqref{eq:MKV-FBSDE} derived from the application of the Pontryagin stochastic maximum principle applied to our MKV control problem is an instance of the above general MKV FBSDE. As argued in \cite{MR3752669}, it turns out that the solution of a MFG can also be captured by a FBSDE system of the form given by~\eqref{eq:MKV-FBSDE-general}. Moreover, such a system can also be obtained using dynamic programming, in which case $Y$ represents the value function instead of its gradient.

The strategy of the method is to replace the backward equation forced on us by the optimization, by a forward equation and treat its initial condition, which is what we are looking for, as a control for a new optimization problem. This strategy has been successfully applied to problems in economic contract theory where it is known as Sannikov's trick. See for example \cite{MR1766421,MR2963805, MR3738664}.
Translated into the present context, this strategy allows us to turn the search for a  solution of~\eqref{eq:MKV-FBSDE-general} into the following optimization procedure. We let the controller choose the initial point and the volatility of the $Y$ process, and penalize it proportionally to how far it is from matching the terminal condition. More precisely, the problem is defined as: Minimize over $y_0 : \RR^d \to \RR^d, z: \RR_+ \times \RR^d \to \RR^{d \times d}$ the cost functional
$$
	J_{FBSDE}(y_0, z) = \EE \left[ \, \left| Y^{y_0,z}_T - G(X^{y_0,z}_T, \cL(X^{y_0,z}_T)) \right|^2 \, \right]
$$
where $(X^{y_0,z}, Y^{y_0,z})$ solve
\begin{equation}
\label{eq:MKV-FBSDE-2forward}
\left\{
\begin{aligned}
d X^{y_0,z}_t
	=
	& B\left(X^{y_0,z}_t, \cL(X^{y_0,z}_t), Y^{y_0,z}_t \right) dt
		+  \sigma\left(X^{y_0,z}_t, \cL(X^{y_0,z}_t) \right) d W_t,
	\\
	d Y^{y_0,z}_t
	=
	& - F\left(X^{y_0,z}_t, \cL(X^{y_0,z}_t), Y^{y_0,z}_t, \sigma\left(X^{y_0,z}_t, \cL(X^{y_0,z}_t) \right)^\dagger z(t, X^{y_0,z}_t)  \right) dt
	\\
	&\qquad
		+ z(t, X^{y_0,z}_t) d W_t,
\end{aligned}
\right.
\end{equation}
with \emph{initial} condition 
$X^{y_0,z}_0 = \xi \in L^2(\Omega, \cF_0, \PP; \RR^d),$ 
$Y^{y_0,z}_0 = y_0 (X_0).$ 
We purposely omitted the dependence of $B,F,$ and $\sigma$ on $t$ to shorten the notations.
Note that the above problem is an optimal control problem of MKV dynamics if we view $(X^{y_0,z}_t,Y^{y_0,z}_t)$ as the state and $(y_0,z)$ as the control. It is of a peculiar form, since the control is the initial value and the volatility of the second component of the state as functions of the first component of the state. Under suitable conditions, the optimally controlled process $(X_t,Y_t)_t$ solves the FBSDE system~\eqref{eq:MKV-FBSDE-general} and vice-versa. 

Due to its special structure, the above MFC problem does not fit in the framework we have analyzed in Section~\ref{sec:approx-result}. However, for numerical purposes, we can still apply the first method described above, see \S~\ref{sec:method-1}: we consider a finite-size population, replace the the controls (namely, $y_0$ and $z$) by neural networks, discretize time, and then use SGD to perform the optimization. In Section \ref{sec:numres} below, we illustrate the performance of this method on MKV FBSDEs coming from MFGs, i.e. game models for which we cannot use the first method.

\begin{remark}
The qualitative analysis of the error due to a plain Euler scheme applied to \eqref{eq:MKV-FBSDE-2forward} could be approached by the stability results of \cite{han2020convergence}. Indeed Theorem 1 therein provides an upper bound in terms of the shooting error of the Euler scheme in the forward direction, while their Theorem 2 addresses the control of this error in terms of approximations of higher order derivatives of the solution.
Combined with the approximation of the interactions using finite-size population sample means followed by the use of SGD, these upper bounds should hold in the present set-up.
\end{remark}

\section{Numerical results}
\label{sec:numres}

In this section, we provide numerical results obtained using implementations of the two numerical methods proposed above. Algorithm 1 refers to the first method, based on direct minimization of the cost function for a mean field control problem; Algorithm 2 refers to the second method, which solves a control problem encoding the solution to an FBSDE system of MKV type.  For all the numerical results presented below, the neural networks we used take the time $t$ as an input. As a consequence, the same architecture (and hence the same number of parameters) can be used for various time discretizations.  Some of the test cases (see in particular test cases 5 and 6 below) do not fall in the scope of the theory presented in the previous sections. However the numerical results show that the algorithms are robust enough and work well even in these cases. As a benchmark, we shall use for each test case either the solution provided by an analytical formula, or the solution computed by a deterministic method based on finite differences for the corresponding PDE system, see e.g.~\cite{MR2679575}.

We recall here the PDE system for the sake of completeness and for future reference.
For simplicity, in the numerical examples considered below, we take $\sigma$ to be constant and we assume that the initial distribution $\mu_0$ has a density denoted by $m_0$. 
In \cite{MR3134900}, A. Bensoussan, J. Frehse and P. Yam have proved that a necessary condition
for the existence of a smooth feedback function $v^* $ achieving
$J( v^*)= \min J(v)$ is that
\begin{displaymath}
  v^*(t,x)={\rm{argmin}}_{v}  \Bigl( f(x, m(t,\cdot), v)+ \nabla u(t,x)\cdot b(x,m(t,\cdot),v)\Bigr),
\end{displaymath}
where $J$ is defined by~\eqref{eq:MFC1-cost} (with a slight abuse of notation we view $J$ as a function of a feedback control) and
where $(m,u)$ solve the
following system of partial differential equations
\begin{equation}
\label{eq:MFC-PDEsystem}
\left\{
\begin{split}
	0&=
	\frac{\partial u} {\partial t} (t,x) + \frac{\sigma^2}{2} \Delta u(t,x) + \tilde H( x, m(t,\cdot), \nabla u(t,x)) 
	 \\
	 &\qquad + \int \frac{\partial \tilde H} {\partial m}(\xi, m(t,\cdot), \nabla u(t, \xi))(x) m(t,\xi) d\xi  ,
\\
	0&= \frac{\partial m} {\partial t} (t,x)  - \frac{\sigma^2}{2} \Delta  m(t,x) +
	 \mathrm{div}\Bigl( m(t,\cdot) \frac{\partial \tilde H} {\partial p} (\cdot,m(t,\cdot),\nabla u(t,\cdot))\Bigr)(x) ,
\end{split}
\right.
\end{equation}
where $\tilde H$ is defined by~\eqref{eq:def-Hamiltonian-red}, 
with the initial and terminal conditions
\begin{equation*}
  m(0,x)=m_0(x),
  \quad %
   u(T,x) = g (x, m(T,\cdot)) + \int \frac{\partial g} {\partial m} (\xi, m(T,\cdot))(x) m(T,\xi) d\xi.
\end{equation*}
Here $\frac{\partial}{\partial m}$ denotes a Fr\'echet derivative. This PDE system has been analyzed and solved numerically e.g. in~\cite{MR3392611} and we use a similar numerical method below to compute our benchmark solutions when no analytical formula is available.

\vskip 12pt
\noindent
\textbf{Test case 1:} 
As a first testbed, let us consider a linear-quadratic mean field control problem, that is, the dynamics are linear and the cost is quadratic in the state, the control and the expectation of the state. We consider a problem in which the state and the control are both in dimension $d=10$. To wit, recalling the notations~\eqref{eq:MFC1-cost}--\eqref{eq:MFC1-dyn}, we take 
$b(x, \mu, v) = Ax + \bar A \bar \mu + B v,$ $f(x, \mu, v) = Q x^2 + \bar Q (\bar \mu - Sx)^2 + R v^2,$ and $g(x, \mu) = Q_T x^2 + \bar Q_T(\bar \mu - S_Tx)^2 $,
for $x, v\in \RR^{10}, \mu \in \cP_2(\RR^{10}), \bar \mu = \int \xi \mu(d\xi) \in \RR^{10}$, where $A,\bar A, B,$ $Q, \bar Q, R, S,$ $Q_T, \bar Q_T, S_T$ are $10 \times 10$ matrices. For simplicity, we take them to be multiples of the identity matrix, so that the solution is the same on all dimensions. A benchmark solution can be computed using a forward-backward system of one-dimensional ODEs. Using a finite-difference method on the $10$-dimensional problem (i.e. without using the knowledge of the matrices structure) would be computationally prohibitive. 

In contrast, we solve the problem using Algorithm~1, without relying a priori on the structure of the matrices.  Numerical results are presented in Figure~\ref{fig:ex-lq-benchmark}. We recall that in this method, the cost function is also the loss function minimized by the SGD. The value of this loss w.r.t. the number of iterations is displayed in Figure~\ref{fig:ex-lq-benchmark-cost}, for a number of time steps $N_T \in \{10, 20, 100\}$ and a number of samples in one population $N \in \{32, 128, 1024\}$.  Using the solution computed with ODEs, we compare the learned control with the actual optimal control. As shown in Figure~\ref{fig:ex-lq-benchmark-errcontrol}, the $L^2$ distance between them decreases as the number of iterations of SGD increases. As expected, increasing the number of time steps and the number of samples in a population improves the result.    These results were obtained with a neural network having $2$ hidden layers, each with $100$ units. The figures have been obtained by evaluating the performance of the learned control in the following way: for each pair $(N,N_T)$, we trained the neural network and evaluated its performance with a population of $N_{testing} = 1024$ samples. We repeated the experiments on $3$ different runs of SGD and averaged over them to obtain the final curves.

\begin{figure}
\centering
\begin{subfigure}{.49\textwidth}
  \centering
  \includegraphics[width=\linewidth]{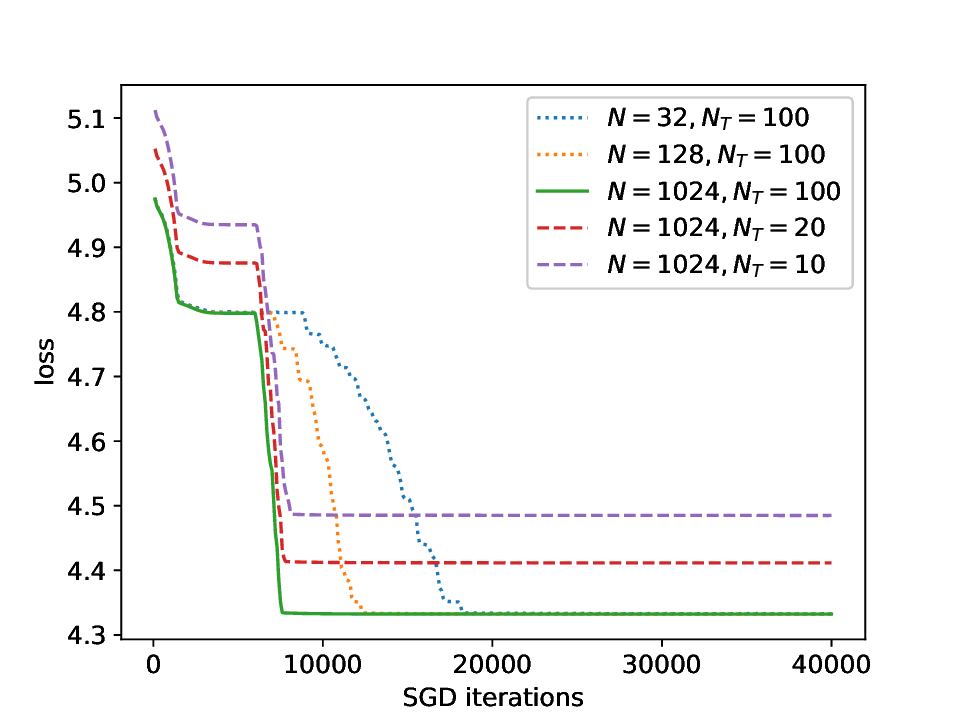}
  \caption{Cost for learned control}
  \label{fig:ex-lq-benchmark-cost}
\end{subfigure}%
\begin{subfigure}{.49\textwidth}
  \centering
  \includegraphics[width=\linewidth]{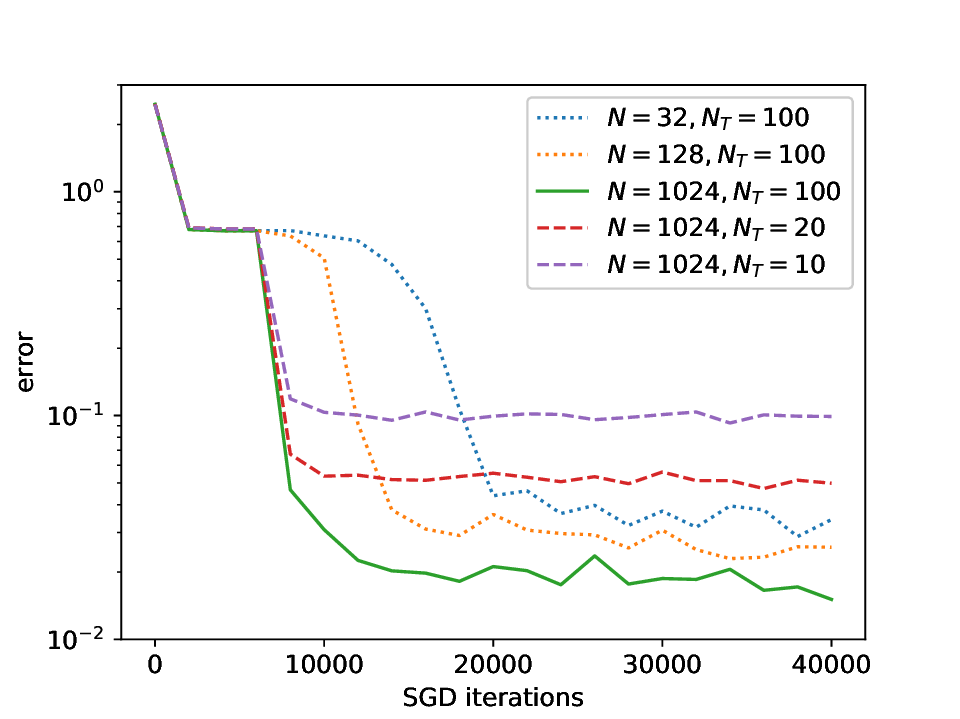}
  \caption{$L^2$ error on the control}
  \label{fig:ex-lq-benchmark-errcontrol}
\end{subfigure}
\caption{Test case 1. Solution computed by Algorithm 1.}
\label{fig:ex-lq-benchmark}
\end{figure}

\vskip 12pt
\noindent
\textbf{Test case 2:} 
Our second example is inspired by the min-LQG problem of~\cite{salhab2015a}. As in the first test case, the dynamics are linear and the running cost function is quadratic. However, we modify the terminal cost to encourage each agent to end up close to one of two targets at the final time. More precisely, considering again a one-dimensional case, with $\xi_1, \xi_2 \in \RR$, we take 
\begin{align*} 
	&g(x,m) = g(x) = \min \left\{  |x - \xi_1 |, | x - \xi_2 | \right\} .
\end{align*} 
Notice that $g$ is differentiable everywhere except at the point $\frac{1}{2}(\xi_1 + \xi_2)$, and the gradient of $g$ is discontinuous.
Numerical results obtained using Algorithm 2 are presented in Figure~\ref{fig:ex-minlq-benchmark} for an example where the two targets are $\xi_1 = 0.25$, $\xi_2 = 0.75$, and the initial distribution is Gaussian with mean $1$. We recall that with this method, we learn $(y_0(\cdot), z(\cdot))$, and then $(Y_{t_n})_{n > 0}$ can be obtained by Monte-Carlo simulations, which yield the blue points. To assess the accuracy of our method, we compare the result with the solution obtained by a deterministic PDE method based on a finite difference scheme (red line).  From the perspective of the coupled system~\eqref{eq:MFC-PDEsystem} of Hamilton-Jacobi-Bellman and Fokker Planck (HJB-FP system for short) characterizing the optimal solution, $Y_t$ correspond to the gradient $\nabla u(t, X_t)$ of the adjoint function $u$, solution to the HJB equation.
We see respectively on Figure~\ref{fig:ex-minlq-benchmark-cost-Y-t0} and Figure~\ref{fig:ex-minlq-benchmark-cost-Y-tT} that $Y_0$ is better approximated than $Y_T$, which is consistent with the fact that $Y_0$ is directly learned under the form $y_0(X_0)$ where $y_0$ is a neural network, whereas $Y_T$ accumulates the errors made on all intermediate time steps until the terminal time.  Note however that the algorithm manages to learn the fact that the gradient of the value function $u$ has a jump at $x=\frac{1}{2}(\xi_1+\xi_2) = 1$. For the results presented here, we have used neural networks for $y_0$ and $z$ with $3$ hidden layers and $100$ units in each hidden layer. We used $N_T = 200$ time steps and $N=2048$.

\begin{figure}
\centering
\begin{subfigure}{.49\textwidth}
  \centering
  \includegraphics[width=\linewidth]{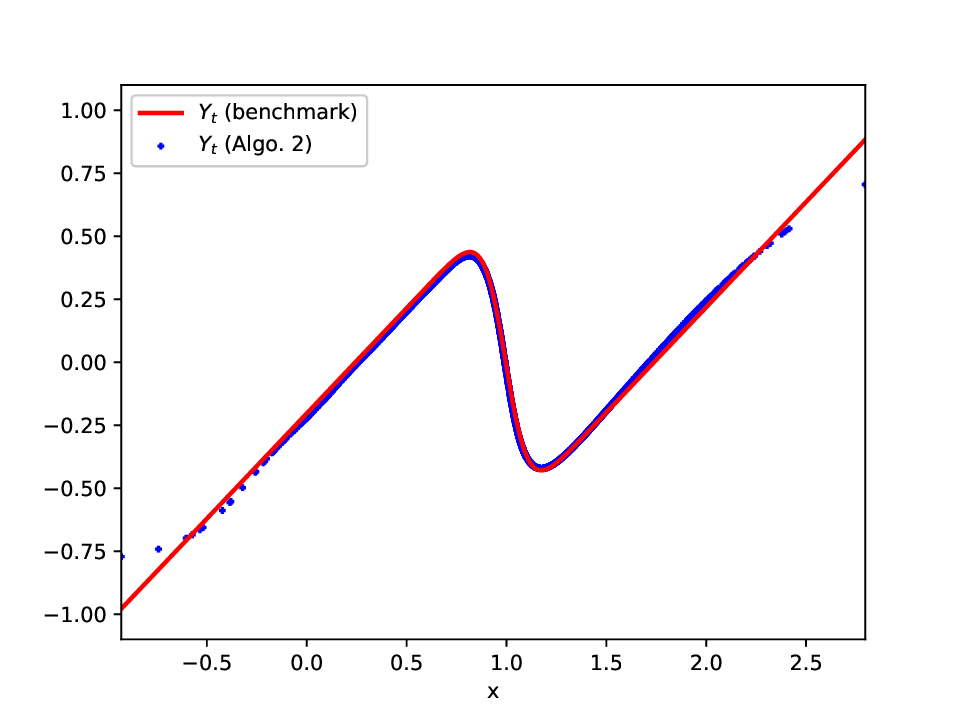}
  \caption{$Y_t$ at time $t=0$}
  \label{fig:ex-minlq-benchmark-cost-Y-t0}
\end{subfigure}%
\begin{subfigure}{.49\textwidth}
  \centering
  \includegraphics[width=\linewidth]{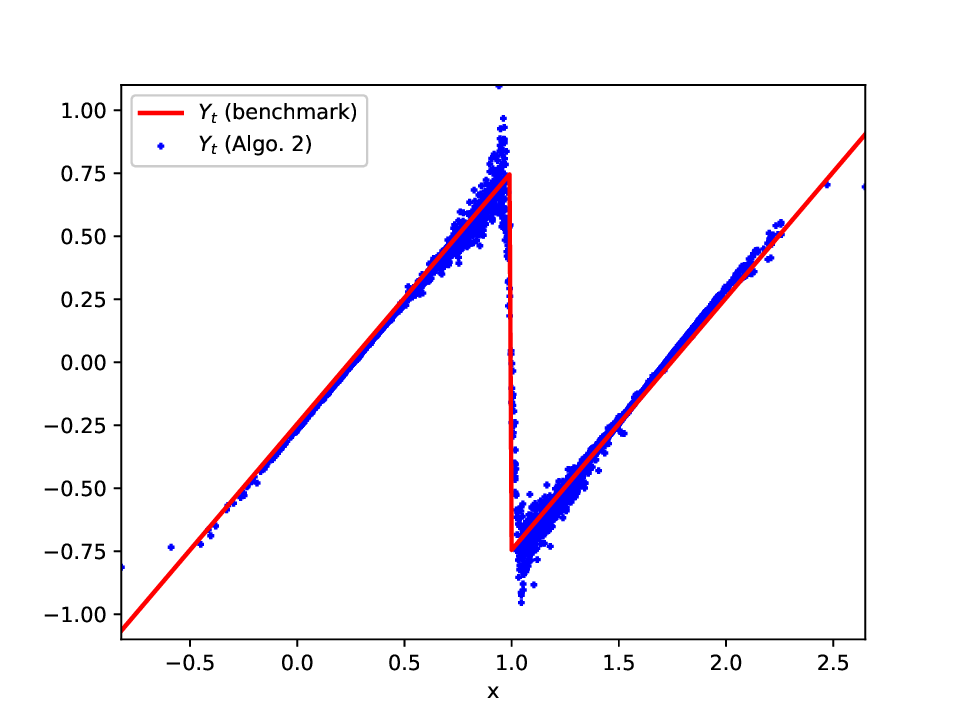}
  \caption{$Y_t$ at time $t=T = 0.2$}
  \label{fig:ex-minlq-benchmark-cost-Y-tT}
\end{subfigure}
\caption{Test case 2. Solution computed by Algorithm 2.}
\label{fig:ex-minlq-benchmark}
\end{figure}

\vskip 12pt
\noindent
\textbf{Test case 3:} 
For the sake of comparison with the existing literature, we now turn our attention to examples considered in~\cite{MR3914553}. We first consider the problem presented in their section 4.2.2, which consists in solving the FBSDE (without MKV dynamics)
\begin{align*}
\begin{cases}
	&d X_t = \rho \cos(Y_t) dt + \sigma dW_t, \qquad X_0 = x_0,
	\\
	& Y_t = \EE_t[\sin(X_T)].
\end{cases}
\end{align*}
Here, $\rho$ is seen as a coupling parameter and, in particular, for $\rho=0$, the forward equation does not depend on the backward component so the equations are decoupled. The system is solved for various values of $\rho$ and for each value of $\rho$, we look at the value of the backward process $Y$ at time $0$. The existence and uniqueness of a solution is discussed in~\cite{MR3914553}, where the authors also point out that the numerical method they propose suffers from a bifurcation phenomenon for large values of $\rho$. Results obtained by our Algorithm 2 are presented in Figure~\ref{fig:ex-CCD72-Y0}. For the sake of comparison, we also display the values obtained using a deterministic method based on a finite difference scheme for the corresponding PDE system~\eqref{eq:MFC-PDEsystem}.  In this example, the value of $Y_0$ corresponds to the value of $\nabla u(0, x_0)$. Here $x_0 = 0$. The left pane (to be compared with~\cite[Figure 2]{MR3914553}) is for the case where the time horizon is $T=1$, while the right pane is for the case $T=2$. For these results, we used a step size $\Delta t = 1/500$, $N=2048$ and neural networks with $5$ hidden layers and $100$ units in each hidden layer. Note that to solve the problem on a larger time horizon, we increase the number of time steps to keep the same step size. However, we keep the same neural network architecture and therefore the same number of parameters.

\begin{figure}
\centering
  \includegraphics[width=0.49\linewidth]{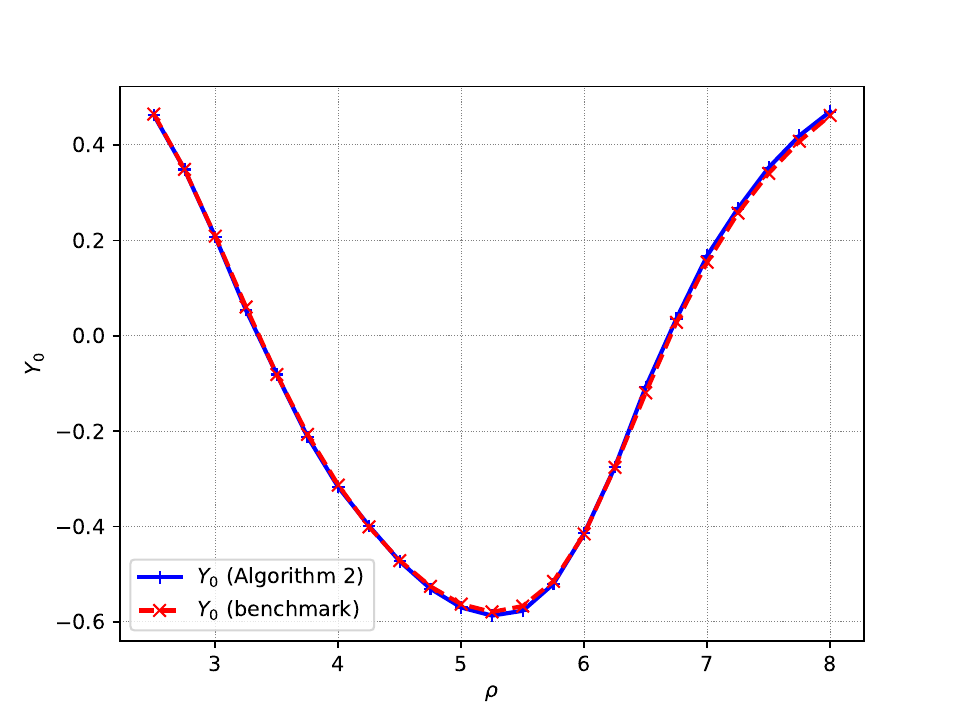}
   \includegraphics[width=0.49\linewidth]{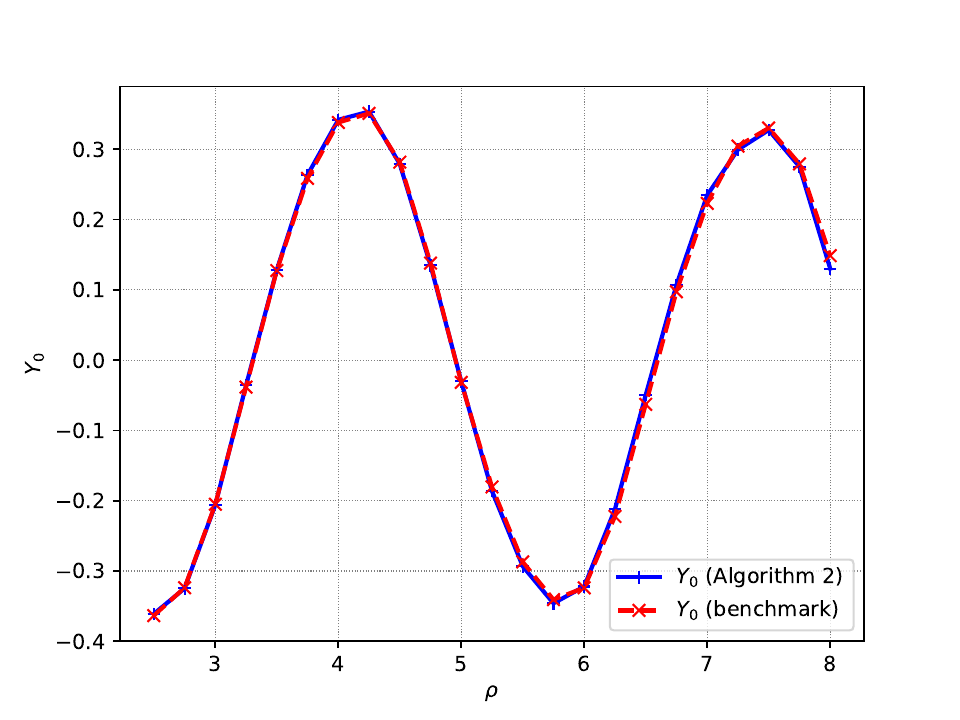}
\caption{Test case 3. Value of $Y_0$ as a function of the parameter $\rho$, computed by Algorithm 2 (in blue) and benchmark values via PDE method (in red), for $T=1$ (left) and $T=2$ (right).}
\label{fig:ex-CCD72-Y0}
\end{figure}

\vskip 12pt
\noindent
\textbf{Test case 4:}
We next turn our attention to a FBSDE system arising from a MFG. Let us recall the example considered in~\cite[Section 4.2.3]{MR3914553}. The problem consists in solving the MFG in which the dynamics of a typical agent is $d X^\alpha_t = \alpha_t dt + d W_t$ and her cost is
$$
	J^{MFG}(\mu, \alpha) = \EE\left[ g(X^\alpha_T) + \int_0^T \left(\frac{1}{2\rho} \alpha^2_t + X^\alpha_t \mathrm{atan}\left(\int x \mu_t(dx)\right) \right) dt \right]
$$
when she uses control $\alpha$ and the flow of distribution of the population is given by $\mu = (\mu_t)_t$, and where the terminal cost $g$ is such that $g'(x) = \arctan(x)$. As in the previous test case, $\rho >0$ is a parameter. Since this is a MFG, Algorithm 1 can not be employed so we resort to Algorithm 2 in order to solve the associated MKV FBSDE system, namely:
\begin{align*}
\begin{cases}
	&d X_t = - \rho Y_t dt + \sigma dW_t, \qquad X_0 = x_0,
	\\
	& d Y_t = \arctan(\EE[X_t]) dt + Z_t dW_t, \qquad Y_T = g'(X_T) := \arctan(X_T).
\end{cases}
\end{align*}

Here again, we solve the problem for various values of $\rho$ and, for each of them, we compute the value of $Y_0$. The numerical results are displayed in Figure~\ref{fig:ex-CCD73-Y0}. We see that the solution is very close to the benchmark, obtained by a deterministic method for the corresponding PDE system. In particular, a noticeable difference with the results of~\cite[Figure 3]{MR3914553} is that our method does not suffer from a bifurcation phenomenon.  This is probably related to the fact that our method relies on the idea of rephrasing the FBSDE system as a control problem driven by two forward SDEs. For the results presented here, we took $x_0=1$ and $T=1$ as in~\cite{MR3914553}. We used $N_T = 200$ time steps, $N=4096$ and neural networks with $3$ hidden layers and $200$ units in each hidden layer. 
\begin{figure}
\centering
  \includegraphics[width=0.7\linewidth]{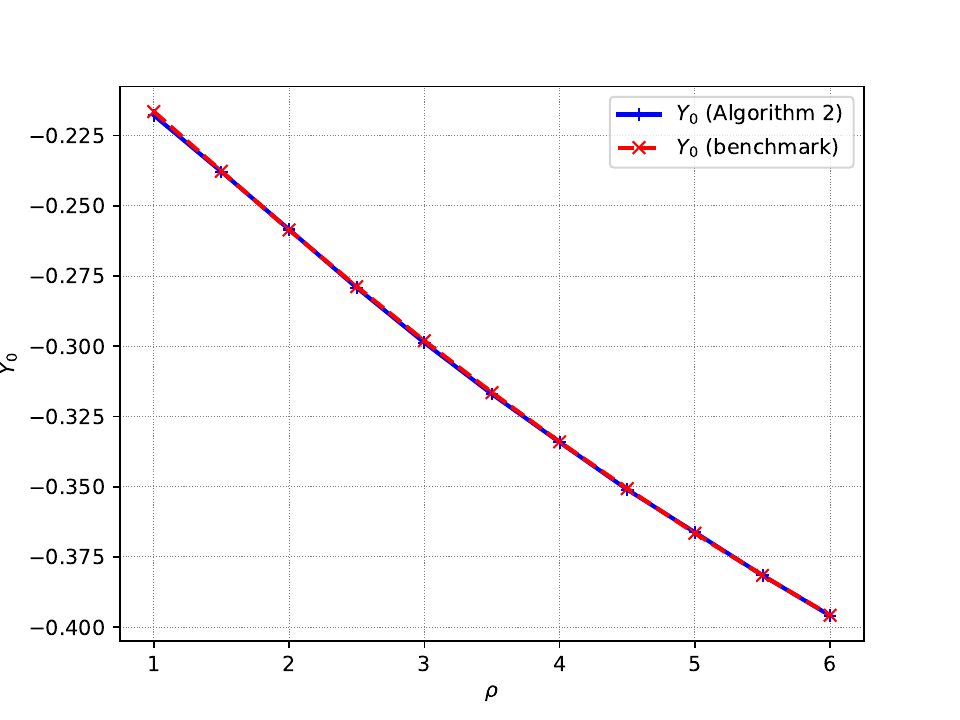}
\caption{Test case 4. Value of $Y_0$ as a function of the parameter $\rho$, computed by Algorithm 2 (in blue) and benchmark values via PDE method (in red).}
\label{fig:ex-CCD73-Y0}
\end{figure}

\vskip 12pt
\noindent
\textbf{Test case 5:} We present an example of mean field control problem with a simple source of common noise. The model is inspired by~\cite{YAJML}, where a more complex model has been studied for crowd motion. For simplicity, we consider the one-dimensional case. The idea is to consider a terminal cost which penalizes the distance to one of two targets, the relevant target being revealed only at time $T/2$. 
Here, we represent the common noise by a random process $\epsilon^0 = (\epsilon^0_t)_{t \in [0,T]}$ which takes value $0$ for $t \in [0, T/2)$ and at time $T/2$ it jumps to value $-c_T$ or $+c_T$, each with probability $1/2$, and then it keeps this value until time $T$. Here $c_T>0$ is a constant and the two targets are $\pm c_T$. For simplicity, we restrict our attention to controls which are feedback functions of $t, X_t$ and $\epsilon^0_t$. The problem is:
	Minimize over $v: [0,T] \times \RR \times \{-c_T,0,c_T\} \to \RR$ the quantity
	\begin{equation*}
		J(v) 
		= 
		\EE\left[\int_0^T |v(t, X_t, \epsilon^0_t)|^2 dt + (X_T - \epsilon^0_T)^2 + (X_T  - \EE[ X_T | \epsilon^0])^2 \right] \, ,
	\end{equation*}
	under the constraint that the process $\bX = (X_t)_{t \ge 0}$ solves the SDE
	\begin{equation*}
		dX_t = v(t, X_t, \epsilon^0_t) dt + \sigma dW_t \, , t \ge 0,
		\quad
		X_0 \sim \mu_0.
	\end{equation*} 
Note that the cost function is assumed to depend on the \emph{conditional} distribution of $X_t$ given the common noise $\epsilon^0$. The solution can be characterized through a system of $6$ PDEs (three coupled systems of HJB and FP equations). We use this to have a benchmark, and compare with the result obtained by Algorithm 1 suitably generalized to handle the common noise. More precisely, at each iteration of SGD, we sample the initial positions of $N$ agents, the $N$ idiosyncratic noises appearing in the dynamics, as well as one path of the common noise. The neural network representing the control function takes as inputs the current time, the current state and the current value of the common noise. Figure~\ref{fig:ex-mfc-commonnoise} displays the evolution of the density in the scenario where $\epsilon^0_T = -c_T$ (in blue) and in the scenario where it is $\epsilon^0_T = c_T$ (in red), with $c_T = 1.5$ and the initial distribution is centered around $0$. The solution obtained by the PDE method is shown with lines, whereas the solution obtained by Algorithm~1 is shown with a histogram obtained by Monte Carlo simulations. We see that, until time $T/2$, the distribution concentrates around $0$, because the agents do not know the final target but they know that the final cost will penalize distance to the mean position. After time $T/2$, the valid target is revealed and the distribution moves toward this target. Note however that due to the running cost, which penalizes displacement, the distribution does not reach exactly the target.  For the results presented here, we used $N_T = 50$ time steps, $N=1024$ and neural a network with $3$ hidden layers and $100$ units in each layer.

\begin{figure}
\centering
\begin{subfigure}{.49\textwidth}
  \centering
  \includegraphics[width=\linewidth]{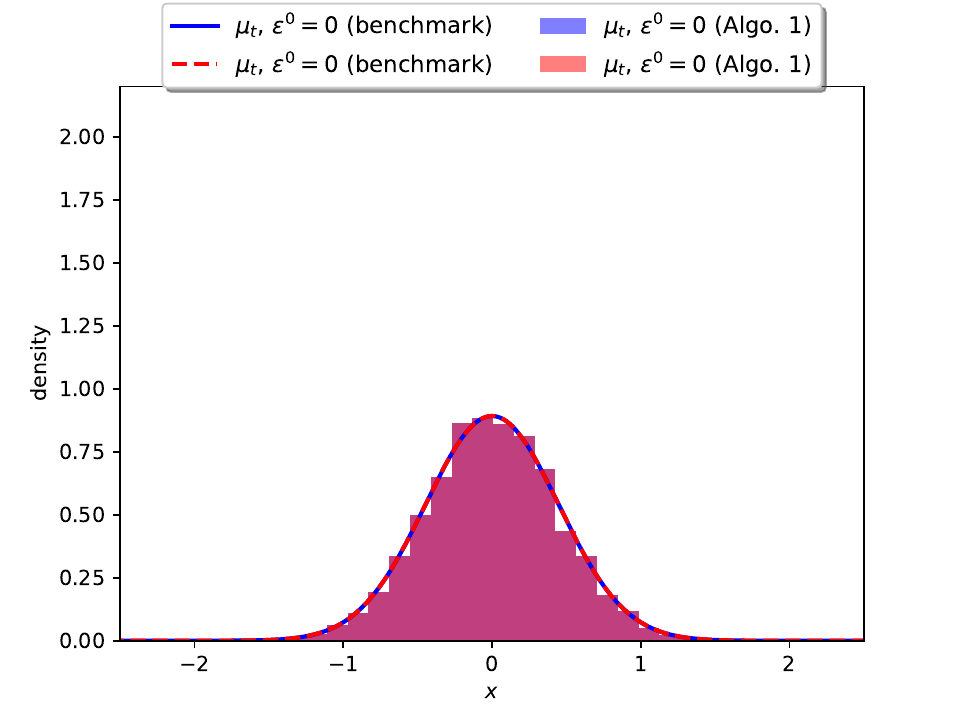}
  \caption*{$t=0$}
\end{subfigure}
\begin{subfigure}{.49\textwidth}
  \centering
  \includegraphics[width=\linewidth]{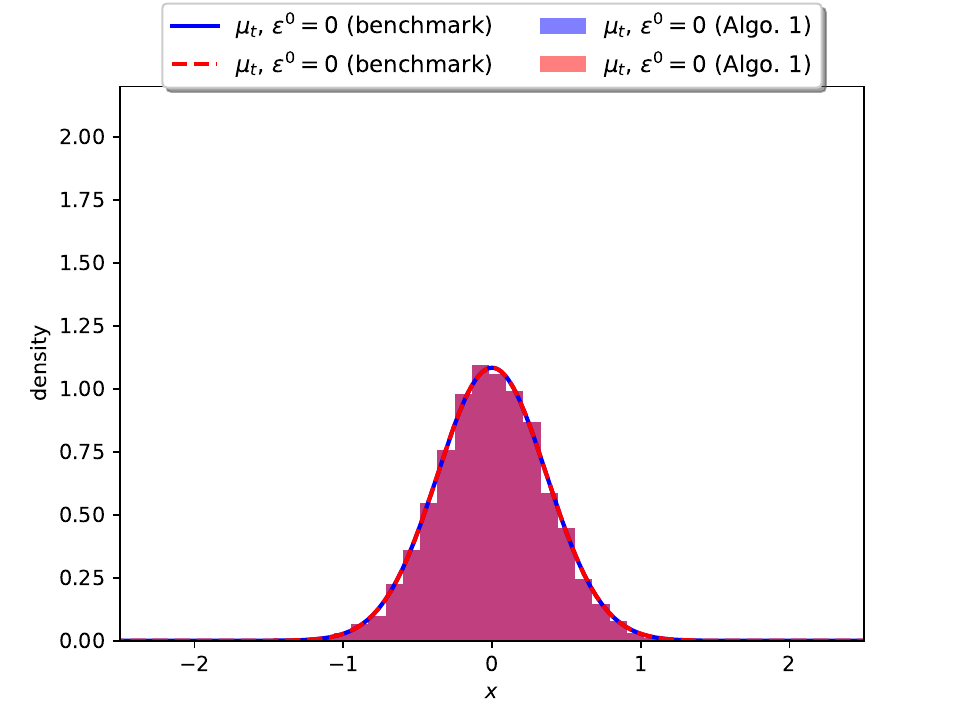}
  \caption*{$t=0.2$}
\end{subfigure}
\\
\begin{subfigure}{.49\textwidth}
  \centering
  \includegraphics[width=\linewidth]{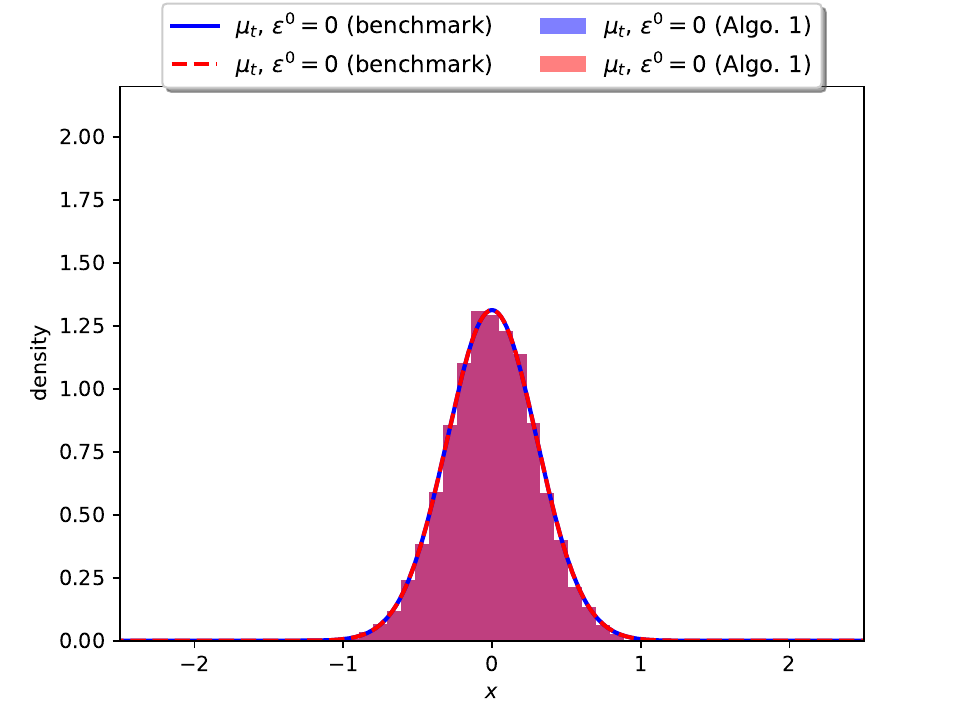}
  \caption*{$t=0.5$}
\end{subfigure}
\begin{subfigure}{.49\textwidth}
  \centering
  \includegraphics[width=\linewidth]{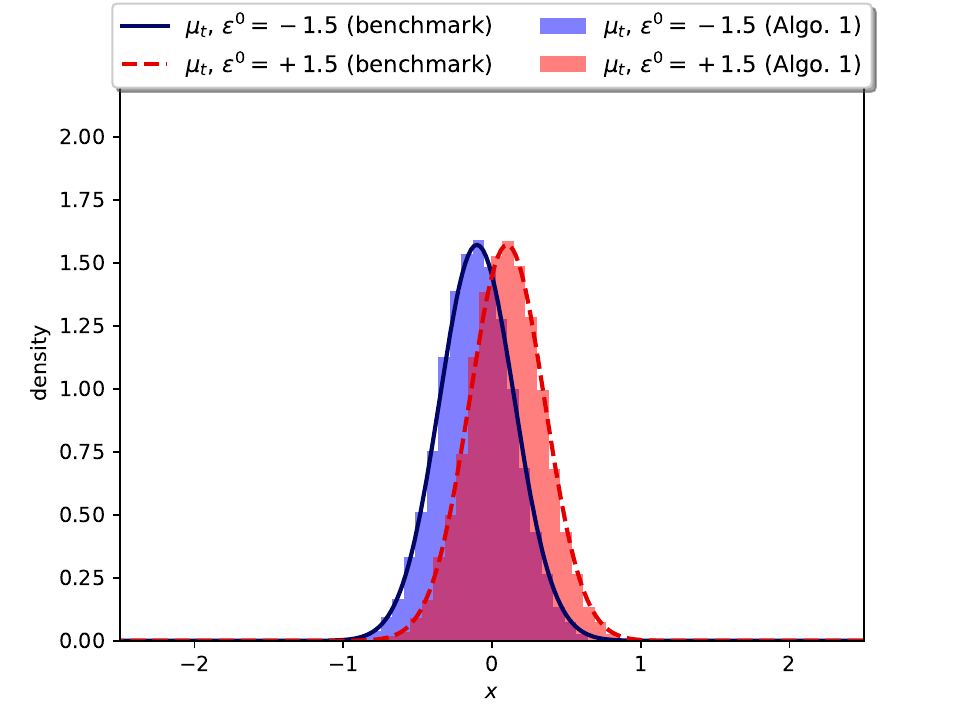}
  \caption*{$t=0.6$}
\end{subfigure}
\\
\begin{subfigure}{.49\textwidth}
  \centering
  \includegraphics[width=\linewidth]{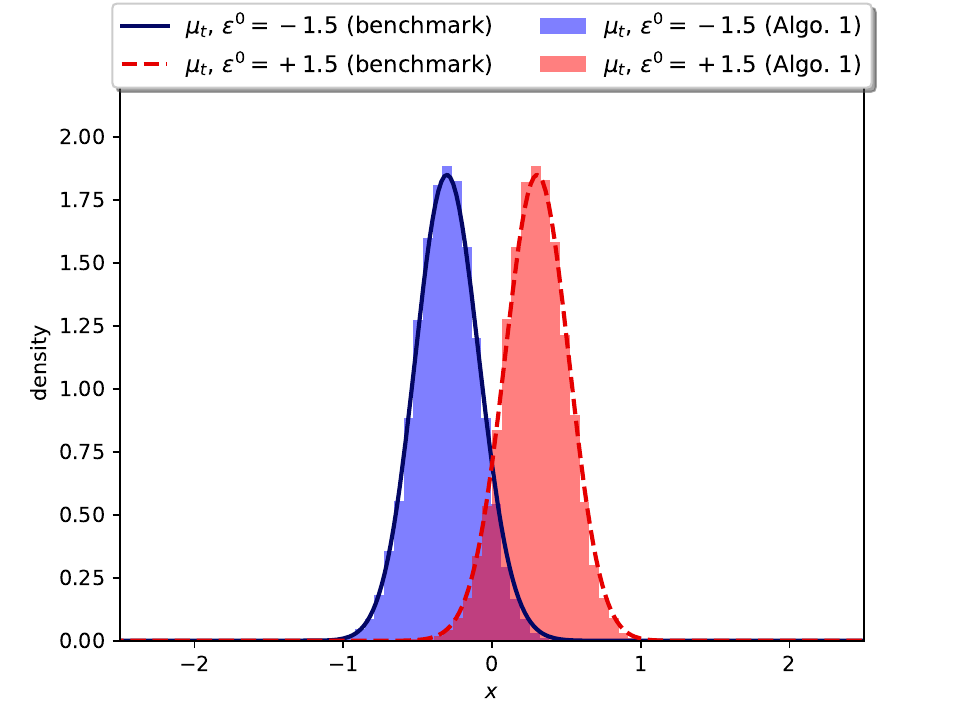}
  \caption*{$t=0.8$}
\end{subfigure}
\begin{subfigure}{.49\textwidth}
  \centering
  \includegraphics[width=\linewidth]{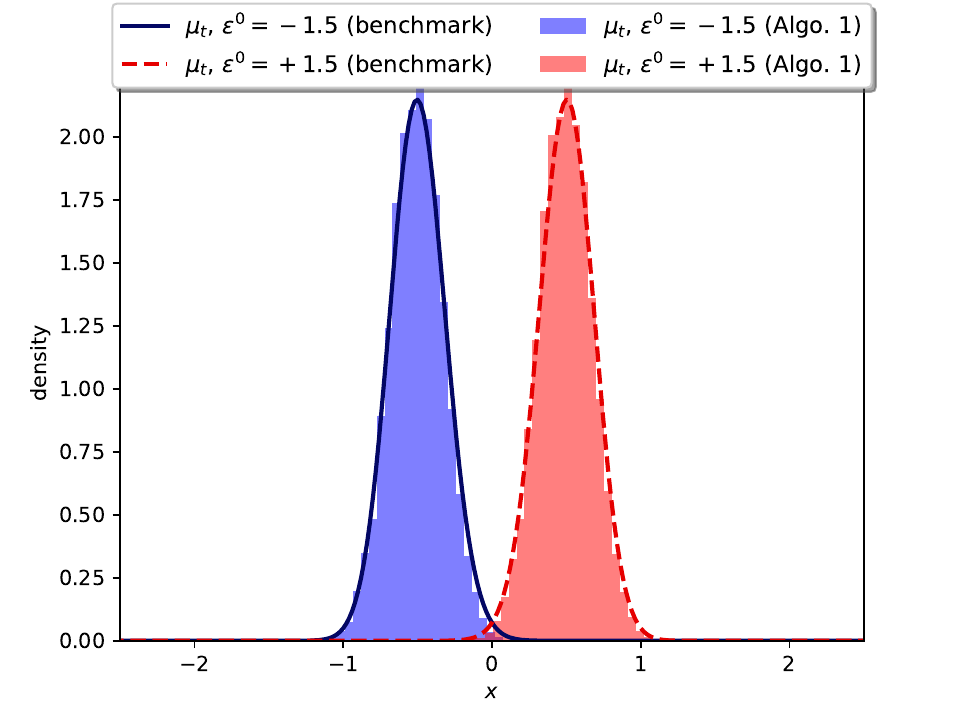}
  \caption*{$t=1.0$}
\end{subfigure}
\caption{Test case 5. Distribution computed by Algorithm 1 (histograms) and by deterministic method for the PDE system (full and dashed lines). Blue (resp. red) corresponds to the scenario where $\epsilon^0$ takes the value $-1.5$ (resp. $+1.5$) at time $T/2 = 0.5$.}
\label{fig:ex-mfc-commonnoise}
\end{figure}

\vskip 12pt
\noindent
\textbf{Test case 6:} Here we consider the example of mean field game for systemic risk introduced in~\cite{MR3325083}. This model involves common noise in the form of a Brownian motion $\bW^0 = (W^0_t)_{t \ge 0}$. Given a flow of conditional mean positions $\bar {\mathbf m} = (\bar m_t)_{t \in [0,T]}$ adapted to the filtration generated by $\bW^0$, the cost function of a representative player is
\begin{align*}
	&J^{MFG}(\bar m, \alpha) 
	\\
	&= \EE\left[\int_0^T \left( \frac{1}{2}\alpha_t^2 - q \alpha_t (\bar m_t - X_t) + \frac{\epsilon}{2} (\bar m_t - X_t)^2 \right) dt + \frac{c}{2} (\bar m_T - X_T)^2\right]
\end{align*}
and the dynamics are
$$
	dX_t = [a (\bar m_t - X_t) + \alpha_t] dt + \sigma \left( \rho \, dW^0_t + \sqrt{1 - \rho^2} dW_t\right).
$$
Here $\rho \in [0, 1]$ is a constant parameterizing the correlation between the noises, and  $q, \epsilon, c, a, \sigma$ are positive constants. We assume that $q \le \epsilon^2$ so that the running cost is jointly convex in the state and the control variables. 
For the interpretation of this model in terms of systemic risk, the reader is referred to~\cite{MR3325083}. 
 The model is of linear-quadratic type and hence has an explicit solution through a Riccati ODE, which we use as a benchmark. 
 
Since this example is a mean field game, we used Algorithm 2 to solve the appropriate FBSDE system. In order to deal with the additional randomness induced by the common noise, we add $\bar m_t$ as an argument of the neural networks playing the roles $y_0(\cdot)$ and $z(\cdot)$ introduced in \S~\ref{eq:method2-FBSDE}. 
For the sake of illustration, Figure~\ref{fig:ex-mfg-sysrisk-traj} displays, for one realization of the common noise, sample trajectories of $X^i$ and $Y^i$ for two different values of $i$, namely $i=1,2$.  The $L^2$ error averaged over $5$ different runs of SGD is shown in Figure~\ref{fig:ex-mfg-sysrisk-error}. For each run, the $L^2$ error on $X$ is computed as: $\left(\frac{1}{N}\sum_{i=1}^N \sum_{t = 0}^{N_T} \|X^{i,{\tt algo}}_t - X^{i,{\tt benchmark}}_t\|^2 \Delta t\right)^{1/2}$, where $X^{i,{\tt algo}}$ and $X^{i,{\tt benchmark}}$ stand for the solution computed respectively by Algorithm~2 and by the ODE method. The $L^2$ error on $Y$ is defined similarly. Clearly, the error decreases as the number $N_T$ of time steps and the number of elements $N$ in the particle population increases. Furthermore, the variance also decreases as $N$ grows. For the numerical tests presented here, we used $\sigma = 0.5, \rho = 0.5, q = 0.5, \epsilon = q^2 + 0.5 = 0.75, a = 1, c = 1.0$ and $T = 0.5$.

\begin{figure}
\centering
\begin{subfigure}{.49\textwidth}
  \centering
  \includegraphics[width=\linewidth]{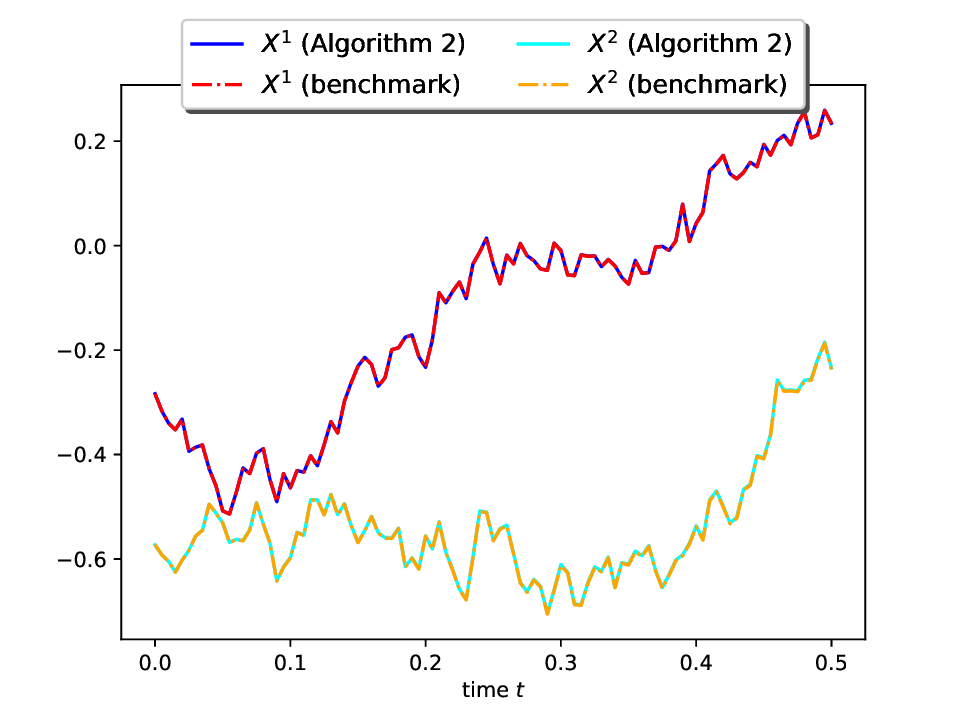}
  \caption*{Trajectories of $X^i, i=1,2$}
\end{subfigure}
\begin{subfigure}{.49\textwidth}
  \centering
  \includegraphics[width=\linewidth]{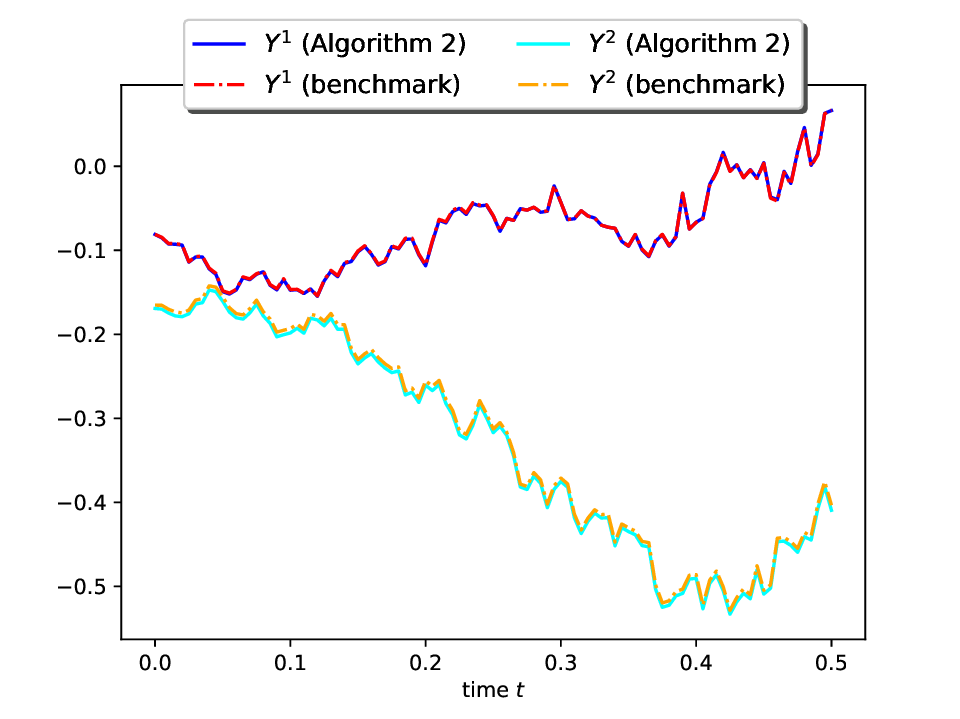}
  \caption*{Trajectories of $Y^i, i=1,2$}
\end{subfigure}
\caption{Test case 6. Sample trajectories: solution computed by Algorithm 2 (full lines, in cyan and blue) and by analytical formula (dashed lines, in orange and red).}
\label{fig:ex-mfg-sysrisk-traj}
\end{figure}

\begin{figure}
\centering
\begin{subfigure}{.49\textwidth}
  \centering
  \includegraphics[width=\linewidth]{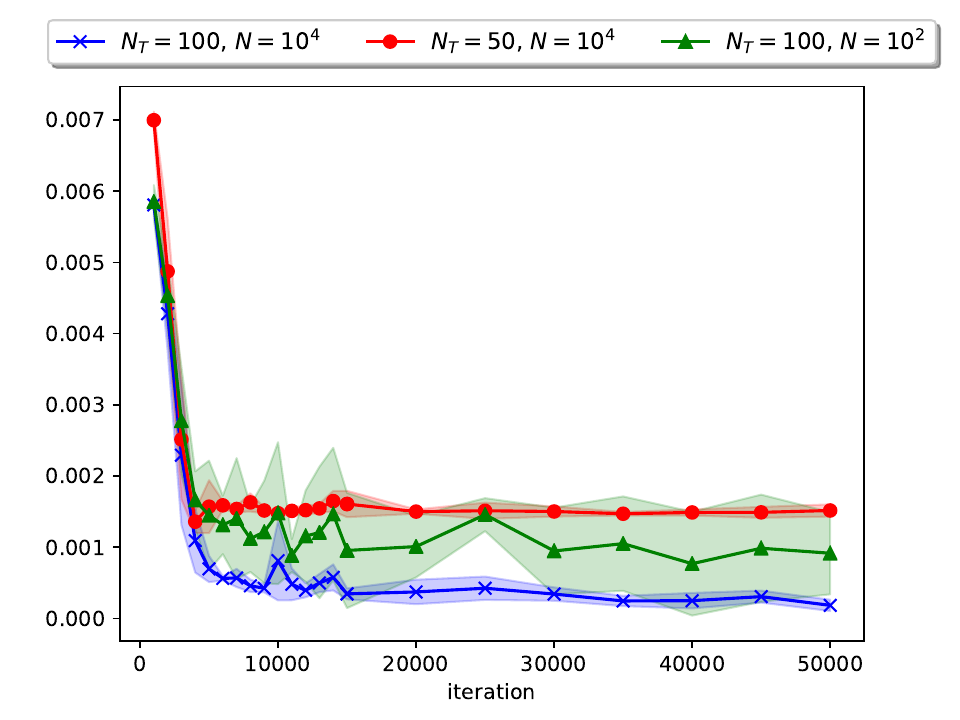}
  \caption*{$L^2$ error on $X$}
\end{subfigure}
\begin{subfigure}{.49\textwidth}
  \centering
  \includegraphics[width=\linewidth]{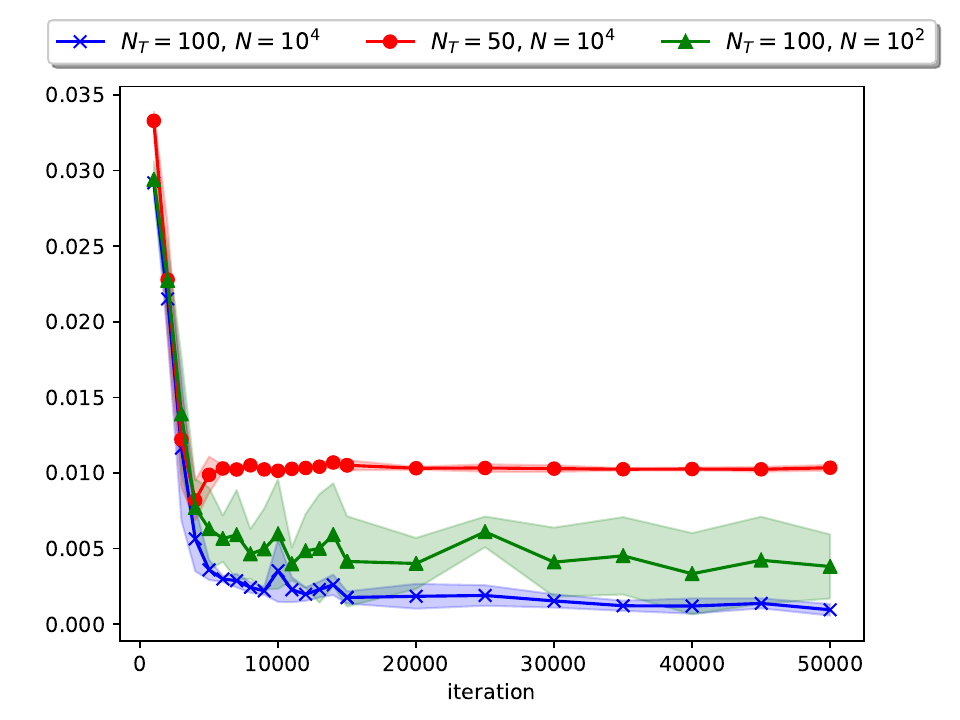}
  \caption*{$L^2$ error on $Y$}
\end{subfigure}
\caption{Test case 6. $L^2$ error for the solution computed by Algorithm 2 compared with the benchmark solution. The number of time steps is denoted by $N_T$ and the number of elements in the population sampled at each iteration of SGD is denoted by $N$.}
\label{fig:ex-mfg-sysrisk-error}
\end{figure}

\section{Conclusion}

The methods proposed here have several advantages. First, prior to this work, only~\cite{balata2019class} proposed stochastic methods for mean field control problems. These methods were not applicable to MFGs, were restricted to a specific (``polynomial'') family of MKV control problems and required either a discretization of space or a suitable choice of basis functions, which is in general hard to identify. Our methods cover a broader class of models and can be used for both MKV control problems and MFG. The fact that our methods do not require space discretization is a major advantage in comparison with deterministic methods based for instance on finite difference schemes. Indeed, as hinted by the LQ example of numerical Test case 1, we expect the two algorithms we propose to work well even in higher dimensions. Furthermore, even though numerical schemes for MKV FBSDEs have been proposed in~\cite{MR3914553,MR3968538}, here we propose to solve such systems by recasting them as (mean field) control problems. This allows us to avoid recursive methods or fixed point methods which typically work only in small time or with small enough coefficients. As demonstrated by Test case 4, our results do not suffer from bifurcation phenomenon. Last point in case, the first method we propose is directly based on the definition of the MKV control problem and does not require deriving any optimality condition (e.g. via PDEs or SDEs). In this sense, it is much more straightforward to implement numerically, which can also be considered as an advantage.

From here, several directions can be considered for future work. On the theoretical side, our proof provides a template to establish a rate of convergence. Improving any of the three steps in this proof would lead to an improvement in the overall convergence result. As far as the second step, i.e. the approximation by neural network, is concerned, to the best of our knowledge, there is not yet any result in the literature which would help improving the bound we obtained. This is because the question of approximating a function and its derivative using a neural network does not seem to have attracted much interest in the past. We believe that this question is recently gaining interest particularly due to the use of neural networks for solving PDEs. We thus expect that in the near future, new results could provide better rates of convergence. On the numerical side, for the results presented here, we have used feedforward fully connected neural networks for the sake of simplicity, but more involved architectures could be employed. This could lead to faster or more accurate computations. Moreover, instead of trying to approximate the optimal control as a function of the time and the state only, one could try to approximate with a neural network the function $V$, which is linked to the optimal control as function of the population distribution. This more challenging problem is the topic of a work in progress. Last, we have applied our second method to examples in which the backward variable plays the role of the derivative of the value function, but the same method can be applied to FBSDE systems stemming from dynamic programming, in which the backward variable represents directly the value function. More generally, the same method can be applied to generic FBSDE systems of MKV type. Hence we believe that the methods presented here will find many more applications.

\appendix

\section{Standing assumptions}
\label{ap:assumptions}
We state a set of technical assumptions which guarantee that the assumptions formulated in Subsection \ref{sub:assumptions} are satisfied.
The conditions we propose include Assumption ``\textnormal{\textbf{Control of MKV Dynamics}}'' from  p. 555 of~\cite{MR3752669} with the additional assumption that the volatility is not controlled: 
\begin{enumerate}[label=\normalfont{\textbf{(A\arabic{*})}}, ref=\textbf{(A\arabic{*})}]
	\it
	\item\label{hyp:ctrlMKV-drift-vol} The drift function $b$ is linear in $x, \mu$ and $\alpha$, and the volatility function $\sigma$ is linear in $x$ and $\mu$. To wit, for all $(t,x,\mu,\alpha) \in [0,T] \times \RR^d \times \cP_2(\RR^d) \times A$, we assume that
	\begin{align*}
		& b(t,x,\mu,\alpha) = b_0(t) + b_1(t)x + \bar{b}_1(t) \bar{\mu} + b_2(t) \alpha,
		\\
		& \sigma(t,x,\mu) = \sigma_0(t) + \sigma_1(t)x + \bar{\sigma}_1(t) \bar{\mu},
	\end{align*}
	for some bounded Lipschitz deterministic functions $b_0, b_1, \bar{b}_1$ and $b_2$ with values in $\RR^d, \RR^{d \times d}, \RR^{d \times d}$ and $\RR^{d \times k}$, and $\sigma_0, \sigma_1$ and $\bar{\sigma}_1$ with values in $\RR^{d \times d}, \RR^{(d \times d) \times d}$ and $\RR^{(d \times d) \times d}$ (the parentheses around $d \times d$ indicate that, for example, $\sigma_1(t)x$ is seen as an element of $\RR^{d \times d}$ whenever $x \in \RR^d$), and where we use the notation $\bar{\mu} = \int x d \mu(x)$ for the mean of a measure $\mu$.
	
	\begin{remark}
	\label{re:Lipschitz_coefs}
	Several technical estimates needed for the control of the errors due to time discretization will be proven in Appendix \ref{app:proof-discreteT} under the assumption that the drift and volatility functions are globally Lipschitz. Note that Assumption \ref{hyp:ctrlMKV-drift-vol} only guarantees Lipschitz continuity in time, locally in the other variables. However, as pointed out in Remark \ref{re:Lip_extension} the proofs of these technical estimates can easily be modified to accommodate Assumption \ref{hyp:ctrlMKV-drift-vol} because of its linear nature.
\end{remark}

	\item\label{hyp:pontryagin-opt-fg-Lip}  $f$ and $g$ satisfy the Assumption ``\textnormal{\textbf{Pontryagin Optimality}}'' from p. 542 of~\cite{MR3752669}, that is:
	\begin{itemize}\label{hyp:pontryagin-opt-fg}
		\item $f$ is differentiable with respect to $(x, \alpha)$, the mappings $(x,\mu,\alpha) \mapsto \partial_x f(t,x,\mu,\alpha)$ and $(x,\mu,\alpha) \mapsto \partial_\alpha f(t,x,\mu,\alpha)$ being continuous for each $t \in [0,T]$. The function $f$ is also differentiable with respect to the variable $\mu$, the mapping $\RR^d \times L^2(\Omega, \cF, \PP; \RR^d) \times A \ni (x, X, \alpha) \mapsto \partial_\mu f(t, x, \cL(X), \alpha)(X) \in L^2(\Omega, \cF, \PP; \RR^{d \times d} \times \RR^{(d \times d) \times d} \times \RR^d)$ being continuous for each $t \in [0,T]$. Similarly, the function $g$ is differentiable with respect to $x$, the mapping $(x,\mu) \mapsto \partial_x g(x,\mu)$ being continuous. The function $g$ is also differentiable with respect to the variable $\mu$, the mapping $\RR^d \times L^2(\Omega, \cF, \PP; \RR^d) \ni (x,X) \mapsto \partial_\mu g(x, \cL(X))(X) \in L^2(\Omega, \cF, \PP; \RR^d)$ being continuous.

		\item\label{hyp:ctrlMKV-fg-Lip} The function $[0,T] \ni t \mapsto f(t, 0, \delta_0, 0)$ is uniformly bounded. There exists a constant $L$ such that, for any $R \geq 0$ and any $(t,x,\mu,\alpha)$ such that $|x|, M_2(\mu), |\alpha| \leq R$, $|\partial_x f(t,x,\mu,\alpha)|$, $|\partial_x g(x,\mu)|$, and $|\partial_\alpha f(t,x,\mu,\alpha)|$ are bounded by $L(1+R)$ and the $L^2(\RR^d, \mu; \RR^d)$-norms of $x' \mapsto \partial_\mu f(t,x,\mu,\alpha)(x')$ and $x' \mapsto \partial_\mu g(x,\mu)(x')$ are bounded by $L(1+R)$.
	\end{itemize}
	In particular, for all $(t,x,\mu,\alpha) \in [0,T] \times \RR^d \times \cP_2(\RR^d) \times A$,
	\begin{align*}
		&|f(t,x',\mu',\alpha') - f(t,x,\mu,\alpha)| + |g(x',\mu') - g(x,\mu)|
		\\
		&\leq L \left[ 1 + |x'| + |x| + |\alpha'| + |\alpha| + M_2(\mu) + M_2(\mu')\right] 
		\\
		&\qquad\times \left[|(x', \alpha') - (x, \alpha)| + W_2(\mu', \mu)\right] .
	\end{align*}

	\item\label{hyp:ctrlMKV-dmu-fg} The derivatives of $f$ and $g$ with respect to $(x,\alpha)$ and $x$ respectively are $L$-Lipschitz continuous with respect to $(x,\alpha,\mu)$ and $(x,\mu)$ respectively, the Lipschitz continuity in the variable $\mu$ being understood in the sense of the $2$-Wasserstein distance. Moreover, for any $t \in [0,T]$, any $x, x' \in \RR^d$, any $\alpha, \alpha' \in \RR^k$, any $\mu, \mu' \in \cP_2(\RR^d)$, and any $\RR^d$-valued random variables $X$ and $X'$ having $\mu$ and $\mu'$ as distributions,
	\begin{align*}
		&\EE\left[| \partial_\mu f(t, x', \mu', \alpha')(X') - \partial_\mu f(t, x, \mu, \alpha)(X) |^2\right]
			\\
			&\leq L\left(|(x',\alpha') - (x,\alpha)|^2 +\EE\left[|X' - X|^2\right] \right),
	\end{align*}
	and
	\begin{align*}
		&\EE\left[| \partial_\mu g(x', \mu')(X') - \partial_\mu g(x, \mu)(X) |^2\right]
			\leq L\left(|x' - x|^2 +\EE\left[|X' - X|^2\right] \right).
	\end{align*}
	\item\label{hyp:ctrlMKV-Lconvex} The function $f$ satisfies the L-convexity property: there is a positive constant $\lambda$ s.t.
	\begin{align*}
		&f(t, x', \mu', \alpha') - f(t, x, \mu, \alpha) - \partial_{(x,\alpha)} f(t, x, \mu, \alpha) \cdot (x'-x, \alpha'-\alpha)
		\\
		& \qquad - \EE\left[\partial_\mu f(t, x, \mu, \alpha)(X) \cdot (X' - X) \right]
		\geq \lambda |\alpha' - \alpha|^2,
	\end{align*}
	for $t \in [0,T]$, $(x,\mu,\alpha) \in \RR^d \times \cP_2(\RR^d) \times A$ and  $(x',\mu',\alpha') \in \RR^d \times \cP_2(\RR^d) \times A$, whenever $X,X' \in L^2(\Omega, \cF, \PP; \RR^d)$ with distributions $\mu$ and $\mu'$ respectively. The function $g$ is also assumed to be L-convex in $(x,\mu)$.
\end{enumerate}
In order to obtain Lipschitz continuity in time of the decoupling field, we will partially strengthen these assumptions and assume, on top of that, the following extra assumption.
\begin{enumerate}[label=\normalfont{\textbf{(B\arabic{*})}}, ref=\textbf{(B\arabic{*})}]
	\it
	\item\label{hyp:pontryagin-opt-fg-extra} For every $\mu \in \cP_2(\RR^d)$ and $\alpha \in A$, the mapping $(t,x) \mapsto \partial_\alpha f(t,x,\mu,\alpha)$ is Lipschitz continuous with a Lipschitz constant independent of $\mu$ and $\alpha$.
	\setcounter{counterExtraAssumptions}{\value{enumi}}
\end{enumerate}

Under these conditions, the minimizer $\hat\alpha$ exists, is unique, and is a regular function of its arguments. See \cite[Theorem 6.19]{MR3752669} and the FBSDE system \eqref{eq:MKV-FBSDE} is well posed, which guarantees the existence of the master field $\cU$. See \cite[Lemma 6.25]{MR3752669}.

\begin{lemma}
\label{lem:hatalpha-Lip}
The function $(t,x,\mu,y) \mapsto \hat\alpha(t,x,\mu,y)$ is Lipschitz continuous, with a Lipschitz constant depending only on the data of the problem.
\end{lemma}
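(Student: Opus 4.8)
The plan is to establish Lipschitz continuity of $\hat\alpha$ directly from the first-order characterization of the minimizer, using the strong convexity of the reduced Hamiltonian in the control variable together with the Lipschitz regularity of the coefficients coming from assumptions \ref{hyp:ctrlMKV-drift-vol}--\ref{hyp:ctrlMKV-Lconvex}. Under \ref{hyp:ctrlMKV-Lconvex}, for fixed $(t,x,\mu,y)$ the map $\alpha \mapsto \tilde H(t,x,\mu,y,\alpha) = b(t,x,\mu,\alpha)\cdot y + f(t,x,\mu,\alpha)$ is $\lambda$-strongly convex in $\alpha$ (the linear-in-$\alpha$ drift term contributes nothing to the second-order behavior, so the strong convexity is inherited entirely from the L-convexity of $f$). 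This gives existence and uniqueness of $\hat\alpha(t,x,\mu,y)$, and since $A$ is closed and convex, $\hat\alpha$ is characterized by the variational inequality
$$
	\partial_\alpha \tilde H(t,x,\mu,y,\hat\alpha(t,x,\mu,y)) \cdot (\alpha' - \hat\alpha(t,x,\mu,y)) \ge 0, \qquad \forall \alpha' \in A,
$$
where $\partial_\alpha \tilde H(t,x,\mu,y,\alpha) = b_2(t)^\top y + \partial_\alpha f(t,x,\mu,\alpha)$.

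Next I would run the standard monotone-operator argument. Fix two data points $P = (t,x,\mu,y)$ and $P' = (t,x',\mu',y')$ (the time variable can be treated the same way, or separately if $A$ varies — here $A$ is fixed), write $a = \hat\alpha(P)$, $a' = \hat\alpha(P')$, use the variational inequality at $P$ tested against $a'$ and the one at $P'$ tested against $a$, and add them. The cross terms telescope to give
$$
	\bigl(\partial_\alpha \tilde H(P,a) - \partial_\alpha \tilde H(P',a')\bigr)\cdot (a - a') \le 0.
$$
Splitting this as $\partial_\alpha \tilde H(P,a) - \partial_\alpha \tilde H(P,a')$ plus $\partial_\alpha \tilde H(P,a') - \partial_\alpha \tilde H(P',a')$, the first piece is bounded below by $2\lambda |a-a'|^2$ by strong convexity of $\alpha \mapsto \tilde H(P,\alpha)$ (equivalently, monotonicity of $\partial_\alpha \tilde H(P,\cdot)$ with modulus $2\lambda$), and the second piece is controlled by the Lipschitz continuity of $\partial_\alpha \tilde H$ in the remaining variables. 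For the latter: $\partial_\alpha f$ is $L$-Lipschitz in $(x,\alpha,\mu)$ by \ref{hyp:ctrlMKV-dmu-fg}, the coefficient $b_2$ is bounded (by \ref{hyp:ctrlMKV-drift-vol}) so $b_2(t)^\top y$ is Lipschitz in $y$, and hence
$$
	\bigl|\partial_\alpha \tilde H(P,a') - \partial_\alpha \tilde H(P',a')\bigr| \le C\bigl(|x-x'| + W_2(\mu,\mu') + |y-y'|\bigr).
$$
Combining, $2\lambda |a - a'|^2 \le C\bigl(|x-x'| + W_2(\mu,\mu') + |y-y'|\bigr)|a-a'|$, and dividing by $|a-a'|$ yields the claimed Lipschitz bound with constant $C/(2\lambda)$, depending only on $L$, $\lambda$, $\|b_2\|_\infty$ and $T$, i.e.\ only on the data of the problem. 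The time variable $t$ enters through $b_2(t)$, which is Lipschitz in $t$ by \ref{hyp:ctrlMKV-drift-vol}, and through $f$; joint Lipschitz continuity in $t$ would require $\partial_\alpha f$ to be Lipschitz in $t$ as well, which is exactly the content of the extra assumption \ref{hyp:pontryagin-opt-fg-extra} — so I would invoke that to cover the $t$-dependence.

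The only subtlety — and the part that requires the most care rather than real difficulty — is the interaction with the measure argument: one must make sure the $\mu$-Lipschitz estimate in \ref{hyp:ctrlMKV-dmu-fg} is genuinely in $W_2$ (it is, stated there) and that testing the variational inequalities against each other is legitimate, which it is because $a, a' \in A$ and $A$ is convex. I expect this to be essentially a transcription of \cite[Theorem 6.19 and Lemma 6.25]{MR3752669}; the statement here is slightly easier because the volatility is uncontrolled, so $\partial_\alpha \tilde H$ has the simple structure above with no $z$-dependence. No genuine obstacle is anticipated.
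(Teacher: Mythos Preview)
Your proposal is correct and follows essentially the same approach as the paper: both exploit the variational inequality characterizing the minimizer on the convex set $A$, test each inequality against the other minimizer, use the $\lambda$-strong convexity of $f$ in $\alpha$ (assumption~\ref{hyp:ctrlMKV-Lconvex}) to extract $2\lambda|a-a'|^2$, and control the remaining cross-term via the Lipschitz regularity of $\partial_\alpha \tilde H = b_2(t)^\top y + \partial_\alpha f$ in the data, invoking~\ref{hyp:pontryagin-opt-fg-extra} for the $t$-variable. The only structural difference is that the paper first quotes \cite[Lemma 6.18]{MR3752669} for Lipschitz continuity in $(x,\mu,y)$ and then carries out the variational-inequality computation only to supply the missing Lipschitz continuity in $t$, whereas you run the argument once for all variables; the computations are otherwise identical.
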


\begin{proof}
From~\cite[Lemma 6.18]{MR3752669}, the map $(t,x,\mu,y)\mapsto \hat\alpha(t,x,\mu,y)$ is measurable, locally bounded and Lipschitz continuous with respect to $(x,\mu,y)$ uniformly in $t \in [0,T]$, the Lipschitz constant depending only upon $\lambda$, the supremum norm of $b_2$ and the Lipschitz constant of $\partial_\alpha f$ in $(x,\mu)$. To complete the proof of the claim, we prove that it is also Lipschitz continuous in $t$.
	We borrow the argument of the proof of~\cite[Lemma 3.3]{MR3752669} (which itself relies on a suitable adaptation of the implicit function theorem to variational inequalities driven by coercive functionals).
	For a given $(t, x, \mu, y)$, $\alpha \mapsto \tilde H(t, x, \mu, y, \alpha)$ is once continuously differentiable and strictly convex so that $\hat \alpha(t, x, \mu, y)$ appears as the unique solution of the variational inequality (with unknown $\alpha$):
	$$
		\forall \beta \in A, \qquad (\beta - \alpha) \cdot \partial_\alpha \tilde H(t, x, \mu, y, \alpha) \geq 0.
	$$
	Consider $\theta = (t, x, \mu, y)$ and $\theta' = (t', x', \mu, y')$ in $[0,T] \times \RR^d \times \cP_2(\RR^d) \times \RR^d$. To alleviate the notations, let us write $a = \hat \alpha(t, x, \mu, y) = \hat \alpha(\theta)$ and $a' = \hat \alpha(\theta')$. The above inequality yields
	\begin{align*}
		(a' - a) \cdot \partial_\alpha \tilde H(\theta, a) \geq 0,
		\qquad
		(a' - a) \cdot \partial_\alpha \tilde H(\theta', a') \leq 0.
	\end{align*}
	Combining these inequalities gives
	$$
		(a' - a) \cdot \partial_\alpha \tilde H(\theta', a')
		\leq 
		(a' - a) \cdot \partial_\alpha \tilde H(\theta, a)
	$$
	hence
	\begin{equation}
	\label{eq:alpha-lip-tmp1}
		(a' - a) \cdot \left[\partial_\alpha \tilde H(\theta, a') - \partial_\alpha \tilde H(\theta', a') \right]
		\geq 
		(a' - a) \cdot \left[\partial_\alpha \tilde H(\theta, a') - \partial_\alpha \tilde H(\theta, a) \right].
	\end{equation}
	Moreover, by Assumption~\ref{hyp:ctrlMKV-Lconvex},
	\begin{align*}
		&f(t, x, \mu, a') - f(t, x, \mu, a) - \partial_{\alpha} f(t, x, \mu, a) \cdot (a'-a)
		\geq \lambda |a' - a|^2.
	\end{align*}
	Exchanging the role of $a$ and $a'$ in the above inequality and summing the resulting inequalities, we deduce that
	\begin{equation}
	\label{eq:alpha-lip-tmp2}
		2\lambda|a' - a|^2
		\leq 
		(a' - a) \cdot \big(\partial_\alpha f(t,x,\mu,a') - \partial_\alpha f(t,x,\mu,a)\big).
	\end{equation}
	Using~\eqref{eq:alpha-lip-tmp1} and~\eqref{eq:alpha-lip-tmp2}, we obtain:
	\begin{align*}
		&2\lambda|a' - a|^2
		\\
		&\leq
		|a' - a| \cdot \big(\partial_\alpha f(t,x,\mu,a') - \partial_\alpha f(t,x,\mu,a)\big)
		\\
		&= 
		|a' - a| \cdot \big(\partial_\alpha \tilde H(\theta,a') - \partial_\alpha \tilde H(\theta,a)\big)
		\\
		&\leq
		|a' - a| \cdot \left[\partial_\alpha \tilde H(\theta, a') - \partial_\alpha \tilde H(\theta', a') \right]
		\\
		&= |a' - a| \cdot \big(\partial_\alpha f(t,x,\mu,a') - \partial_\alpha f(t',x',\mu,a')\big)
		+ |a' - a| \cdot \big(b_2(t)y - b_2(t')y'\big).
	\end{align*}
	The last expression can be bounded as follows, for a constant $C$ depending only on the data of the problem,
	\begin{itemize}
		\item by Assumption~\ref{hyp:ctrlMKV-drift-vol}, $b_2$ is bounded, hence $|b_2(t)y - b_2(t')y'| \leq C |y-y'|$.
		\item by Assumption~\ref{hyp:pontryagin-opt-fg-extra}, 
		$$
			\big(\partial_\alpha f(t,x,\mu,a') - \partial_\alpha f(t',x',\mu,a')\big)
			\leq
			C \left(|t-t'| + |x-x'|\right).
		$$
	\end{itemize}
	Thus
	\begin{align*}
		2\lambda|a' - a|^2
		&\leq
		C |a' - a| \left( |t-t'| + |x-x'| + |y-y'| \right),
	\end{align*}
	which yields the conclusion.
\end{proof}

\vskip 2pt\noindent
We further assume the following regularity of the decoupling field.
\begin{enumerate}[label=\normalfont{\textbf{(B\arabic{*})}}, ref=\textbf{(B\arabic{*})}]
	\setcounter{enumi}{\value{counterExtraAssumptions}}
	\it
	\item\label{hyp:pontryagin-opt-fg-extra-V} The mapping $(t,x) \mapsto V(t,x)$ is Lipschitz continuous.
	\setcounter{counterExtraAssumptions}{\value{enumi}}
\end{enumerate}
The regularity in $x$ is rather standard, but the Lipschitz continuity in time is less so. However, we note that assumptions~\ref{hyp:ctrlMKV-drift-vol}--\ref{hyp:ctrlMKV-Lconvex}, as well as the result of Lemma~\ref{lem:hatalpha-Lip} and the Lipschitz continuity of $V$ given by assumption~\ref{hyp:pontryagin-opt-fg-extra-V} are satisfied for instance under the assumptions \textit{\textbf{(H6)}(i)--(iii)} of Chassagneux, Crisan and Delarue~\cite{ChassagneuxCrisanDelarue_Master} or the assumptions of \textit{\textbf{Theorem 2.4.2.}} of Cardaliaguet, Delarue, Lasry and Lions~\cite{CDLL} .

\vskip 2pt\noindent
Our next assumption concerns the initial distribution.
\begin{enumerate}[label=\normalfont{\textbf{(B\arabic{*})}}, ref=\textbf{(B\arabic{*})}]
	\setcounter{enumi}{\value{counterExtraAssumptions}}
	\it
	\item\label{hyp:pontryagin-opt-fg-extra-mu0} The initial distribution $\mu_0$ is in $\cP_4(\RR^d)$, i.e., there exists a (finite) constant $C_{\mu_0}$ such that
	$$
		M_4(\mu_0) = \int_{\RR^d} |x|^4 d \mu_0(x) \le C_{\mu_0}.
	$$
\end{enumerate}
This mild assumption is useful to obtain well-posedness of the MKV SDE and the interacting particle system, as well as stability estimates.

\vskip 2pt
As explained in Remark \ref{re:C2+C3}, our analysis of the error due to the discretization of time requires extra assumptions.
\begin{enumerate}[label=\normalfont{\textbf{(C\arabic{*})}}, ref=\textbf{(C\arabic{*})}]
	\it
	\item\label{hyp:euler-extra-assumption-b-sigma-f} Denoting $\Theta = (t,x,\mu,\alpha)$ where $t \in [0,T],$ $x \in \RR^d$, $\mu \in \cP_2(\RR^d)$ and $\alpha \in A$, there holds:  $\Theta \mapsto |\partial_t f\bigl(\Theta\bigr)|$ has at most quadratic growth, $\Theta \mapsto |\partial_\alpha f\bigl(\Theta\bigr)|$, $\Theta \mapsto |\partial_x f\bigl(\Theta\bigr)|$, $\Theta \mapsto |\partial_x f\bigl(\Theta\bigr)|$ and $(\Theta,x) \mapsto |\partial_{\mu} f\bigl(\Theta\bigr)(x)|$ have at most linear growth, and 
	$\Theta \mapsto |\partial^2_{xx}f\bigl(\Theta\bigr)|$,	$\Theta \mapsto |\partial^2_{x\alpha}f\bigl(\Theta\bigr)|$, $\Theta \mapsto |\partial^2_{\alpha\alpha} f\bigl(\Theta\bigr)|$,  $(\Theta,x) \mapsto |\partial_{x}\partial_\mu f\bigl(\Theta\bigr)(x)|$, $(\Theta ,x) \mapsto |\partial_{\alpha}\partial_\mu f\bigl(\Theta\bigr)(x)|$, $(\Theta, x) \mapsto |\partial_v\partial_{\mu}f\bigl(\Theta\bigr)(v)|_{v=x}|$, $(\Theta, x) \mapsto |\partial^2_{\mu}f\bigl(\Theta\bigr)(x,x)|$ are bounded by a constant.
	\item\label{hyp:euler-extra-assumption-gradV} $V$ is twice differentiable with respect to $x$ and the mapping $(t,x) \mapsto \partial^2_{x,x} V(t,x)$ is Lipschitz continuous.
	\item\label{hyp:euler-extra-assumption-gradAlpha} $\hat\alpha$ is twice differentiable with respect to $x,y$ and $\partial^2_{x,x} \hat\alpha$, $\partial^2_{y,y} \hat\alpha$ and $\partial^2_{x,y} \hat\alpha$ are Lipschitz continuous functions of $(t,x,\mu,y)$. 
\end{enumerate}
The last two assumptions above ensure that the second order derivative with respect to $x$ of the optimal control $\hat\bv$ is Lipschitz continuous. This fact is used in the analysis of one of the error terms induced by the Euler scheme introduced for the purpose of time discretization.

\vskip 2pt
Again, unless otherwise specified, the constants appearing in the proofs depend only on the data of the problem ($T$, $\mu_0$ and the constants appearing in the above assumptions), and $C$ denotes a generic constant whose value might change from one line to the next.

\section{Proof of Proposition~4}
\label{sec:proof-sec-approx-Nagents-feedback}

The result follows from a slight modification of the proof of~\cite[Theorem 6.17]{MR3753660}. Note that our standing assumptions ensure that the assumption ``\textnormal{\textbf{Control of MKV Dynamics}}'' from~\cite[p. 555]{MR3752669} holds, that $\sigma$ does not depend on the control, and that $\mu_0 \in \cP_4(\RR^d)$. We can thus reuse directly the last inequality of the proof of~\cite[Theorem 6.17]{MR3753660}, and we obtain:
	\begin{equation}
	\label{eq:last-ineq-thm617}
		J^N(\hat \balpha^{N})
		\leq
		J + \epsilon_1(N)
	\end{equation}
	where $J = \inf_{\balpha \in \AA} J(\balpha)$ is the objective function minimized over all admissible open loop controls, and $\hat{\balpha}^N$ is the distributed control given in feedback form by~\eqref{eq:hat-alpha-feedbackform}, namely
	\begin{equation}
	\label{eq:def-hatbalpha-feedback}
		\hat \alpha^{N,i}_t = \hat v (t, X^{N,i}_t)  = \hat \alpha\left( t, X^{N,i}_t, \mu_t, V(t, X^{N,i}_t) \right) \, .
	\end{equation}
	Here and throughout, we make a slight abuse of notation writing $J^N(\balpha)$ for $J^N(\bv)$ when the admissible control $\balpha\in\AA$ is given by a feedback function $v$ such that $\bv\in\VV$.
	Notice that thanks to the symmetry between the agents, $J^N(\hat \bv)$ defined by~\eqref{eq:def-JN-closedloop} can be viewed as the cost of a typical player. In other words, for each $j \in \{1,\dots,N\}$,
	$$
		J^N(\balpha) = J^{N,j}(\balpha)
	$$
	where 
	$$
		J^{N,j}(\balpha) = \EE\left[\int_0^T f(X^j_t, \mu^N_t, \alpha^j_t)dt + g(X^j_T,\mu^N_T)\right]
	$$
	 under the constraint: 
	 \begin{equation*}
		dX^i_t = b(t, X^i_t, \mu^N_t, \alpha^i_t) dt + \sigma(t, X^i_t, \mu^N_t) dW^i_t \, , \quad t \ge 0, \, i \in \{1,\dots,N\} \, ,
	\end{equation*}	
where the $(X^i_0)_{i \in \{1, \dots, N\}}$ are i.i.d. with distribution $\mu_0$, also independent of the Wiener processes $\bW^i, i=1, \dots, N$. Since~\eqref{eq:def-hatbalpha-feedback} says that $\hat \alpha^{N,i}_t$ is given by an admissible feedback function, 
	$$
		J^N(\hat \balpha^{N}) \geq \inf_{\bv \in \VV} J^N(\bv).
	$$
	As a consequence, inequality~\eqref{eq:last-ineq-thm617} yields
	$$
		\inf_{\bv \in \VV} J^N(\bv)
		\leq
		\inf_{\balpha \in \AA} J(\balpha) + \epsilon_1(N),
	$$
	which concludes the proof.

\section{Proofs of the Results of Section~\ref{SEC:APPROX-NN}}
\label{app:proof-approx-NN}

We turn our attention to the proof of Proposition~\ref{lem:var-J-feedback-compact}. 

We will repeatedly make use of the following fact: if $\Phi: \RR \times \RR^d \times \cP_2(\RR^d) \times A \to \RR$ is a (globally) Lipschitz function in the sense that there is a constant $L$ such that
	\begin{align*}
		&|\Phi(t', x',\mu',\alpha') - \Phi(t,x,\mu,\alpha)|
		\leq L \left[|(t', x', \alpha') - (t, x, \alpha)| + W_2(\mu', \mu)\right],
	\end{align*}
then, there is a constant $C$ depending only on $L$ such that for any $ (x^1,\dots,x^N)$ and $(y^1,\dots,y^N)$ in $ (\RR^d)^N$, and $(\alpha^1,\dots,\alpha^N)$ and $(\beta^1,\dots,\beta^N)$ in $A^N$,
\begin{equation}
\label{eq:useful-ineq-averageL2dist-W2}
\begin{split}
	&\frac{1}{N} \sum_{i=1}^N |\Phi(t, y^i, \nu^N,\beta^i) - \Phi(t, x^i, \mu^N,\alpha^i)|^2
	\\
	&\le
	C\left(|t' - t|^2 +  \frac{1}{N}\sum_{i=1}^N |y^i - x^i|^2 + \frac{1}{N}\sum_{i=1}^N |\beta^i - \alpha^i|^2  \right)
\end{split}
\end{equation}
where $\mu^N = \frac{1}{N} \sum_{i=1}^N \delta_{x^i}$  and $\nu^N = \frac{1}{N} \sum_{i=1}^N \delta_{y^i}$ are the empirical distributions corresponding to $\underline x, \underline y$. This remark uses the fact that $W_2(\mu^N, \nu^N) \le \frac{1}{N}\sum_{i=1}^N |y^i - x^i|^2$.

\begin{proof}[Proof of Proposition~\ref{lem:var-J-feedback-compact}]
	We start by bounding the probability that a particle of the interacting system exits a bounded domain.
If $\bv \in \VV$ is a Lipschitz continuous feedback control we denote by  $(\bX^{i,\bv})_{i=1,\dots,N}$ the solution of the state system~\eqref{eq:evolXi-closedloop} controlled by $\bv$, and since we assume that $\int|x|^4\mu_0(dx)<\infty$, a standard stability estimate for the solutions of \eqref{eq:evolXi-closedloop} yields that there exists a constant $C$ depending only on the data of the problem and on the Lipschitz constant of the controls $\bv, \bw$, such that, for $\bvarphi \in \{\bv, \bw\}$,
\begin{equation}
\label{fo:stability}
	\EE \Bigl[ \sup_{ t \in[0, T]} |X^{i,\bvarphi}_t|^2 \Bigr] \leq C, \qquad \EE \Bigl[ \sup_{ t \in[0, T]} |X^{i,\bvarphi}_t|^4 \Bigr] \leq C.
\end{equation}
See for example \cite{MR1108185}. Consequently, Markov's inequality implies that for all $R>0$ we have:
	\begin{equation}
	\label{eq:bound-proba-eventER}
		\PP\left[ \bar{\cE}^{i,\bv}_R \right] \leq \frac{C}{R} \, .
	\end{equation}
where for each $i\in\{1,\dots,N\}$,  $\bar{\cE}^{i,\bv}_R$ denotes the complement of ${\cE}^{i,\bv}_R$ defined by:
	\begin{equation}
	\label{eq:def-event-cEivR}
		\cE^{i,\bv}_R = \Bigl\{ \sup_{ t \in[0, T]} |X^{i,\bv}_t| \leq R \Bigr\}.
	\end{equation}
Now let $R>0$ and $\Gamma>0$ be constants, and let $\bw$ and $\bv$ be Lipschitz continuous feedback controls such that
	\begin{equation}
	\label{eq:varJN-bnd-bv-bw-proof}
		\| \bv_{|\bar{B}_d(0,R)} - \bw_{|\bar{B}_d(0,R)} \|_{\cC^0(\bar{B}_d(0,R))} \leq \Gamma \, .
	\end{equation}
We assume that for each $i$, $X^{i,\bw}_0 = X^{i,\bv}_0 = \xi_0^i$ a.s., with the random variables $\xi_0^i, i=1,\dots,N,$ being  i.i.d. with common distribution $\mu_0$. 
We will use the notation $\mu^{N,\bw}_t = \frac{1}{N} \sum_{i=1}^N \delta_{X^{i,\bw}_t}$ and $\mu^{N,\bv}_t = \frac{1}{N} \sum_{i=1}^N \delta_{X^{i,\bv}_t}$ for the empirical distributions. 
	
	Below, unless otherwise specified, $C$ denotes a generic constant whose value may change from one line to the next, but which in any case depends only on the data of the problem and possibly on the Lipschitz constants of $\bv$ and $\bw$ as well as $v(0,0), w(0,0)$; in particular, $C$ is always independent of $\Gamma$ and $R$.
	
\noindent
\textbf{Step 1:} we first control $|\bX^{i,\bw} - \bX^{i,\bv}|$ in terms of $\Gamma$ and $\PP[\bar{\cE}^{i,\bv}_R]$ which, by~\eqref{eq:bound-proba-eventER}, can be made as small as desired by increasing $R$ as needed.

Notice that, by~\eqref{eq:varJN-bnd-bv-bw-proof} and the Lipschitz continuity of $\bw$ and $\bv$ we have:
\begin{equation*}
\begin{aligned}
	&|w(t,X^{i,\bw}_t) - v(t,X^{i,\bv}_t)|
	\\
	&\leq
	\indic_{\cE^{i,\bv}_R} \left[ |w(t,X^{i,\bw}_t) - w(t,X^{i,\bv}_t)| + |w(t,X^{i,\bv}_t) - v(t,X^{i,\bv}_t)|\right] 
	\\
	&\qquad 
	+ \indic_{\bar \cE^{i,\bv}_R} \left[ |w(t,X^{i,\bw}_t)| + |v(t,X^{i,\bv}_t)| \right]
	\\
	&\leq
	C \left\{ \indic_{\cE^{i,\bv}_R} \left[ |X^{i,\bw}_t - X^{i,\bv}_t| + \Gamma \right] 
		+ \indic_{\bar \cE^{i,\bv}_R} \left[ |X^{i,\bw}_t| + |X^{i,\bv}_t| + 1 \right] \right\}
	\\
	&\leq
	C \left\{ \left[ |X^{i,\bw}_t - X^{i,\bv}_t| + \Gamma \right] 
		+ \indic_{\bar \cE^{i,\bv}_R} \left[ |X^{i,\bw}_t| + |X^{i,\bv}_t| + 1 \right] \right\}.
\end{aligned}
\end{equation*}
As a result:
\begin{equation}
\label{eq:varJ-bdd-wXw-vWv-2}
\begin{aligned}
	&\EE\left[\frac{1}{N}\sum_{i=1}^N |w(t,X^{i,\bw}_t) - v(t,X^{i,\bv}_t)|^2 \right]
	\\
	&\leq
	C \left\{ \EE\left[\frac{1}{N}\sum_{i=1}^N  |X^{i,\bw}_t - X^{i,\bv}_t|^2 \right] + \Gamma^2 
	\right.
	\\
	& \quad\left.
		+ \sqrt{\frac{1}{N}\sum_{i=1}^N \PP(\bar{\cE}^{i,\bv}_R)} \left( \EE\sqrt{\frac{1}{N}\sum_{i=1}^N |X^{i,\bw}_t|^4 } + \EE\sqrt{\frac{1}{N}\sum_{i=1}^N |X^{i,\bv}_t|^4 } + 1 \right) \right\}
	\\
	&\leq
	C \left\{ \EE\left[\frac{1}{N}\sum_{i=1}^N  |X^{i,\bw}_t - X^{i,\bv}_t|^2 \right] + \Gamma^2  + \frac{1}{R} \right\}
\end{aligned}
\end{equation}
where we used \eqref{fo:stability}.
To alleviate the notations, we introduce $\xi^{i,N,\bvarphi}_t = (t, X^{i,\bvarphi}_t, \mu^{N,\bvarphi}_t, \varphi(t,X^{i,\bvarphi}_t))$ for $\bvarphi \in \{\bv, \bw\}$. 
Using the form of the dynamics, we have for all $r \in [0,T]$,
\begin{equation}
\label{eq:bnd-Xw-Xv-1}
\begin{aligned}
	&\frac{1}{N} \sum_{i=1}^N \EE \sup_{0 \leq s \leq r} |X^{i,\bw}_s - X^{i,\bv}_s|^2
	\\
	&\leq
	C \left(
	\frac{1}{N} \sum_{i=1}^N \EE \sup_{0 \leq s \leq r} \left|\int_0^s 
	\left[b(\xi^{i,N,\bw}_t) - b(\xi^{i,N,\bv}_t)  \right] 
	dt \right|^2 
	\right.
	\\
	& 
	\left. 
	\qquad+ 
	 \frac{1}{N} \sum_{i=1}^N \EE \sup_{0 \leq s \leq r} \left|\int_0^s \left[\sigma(t, X^{i,\bw}_t, \mu^{N,\bw}_t) - \sigma(t, X^{i,\bv}_t, \mu^{N,\bv}_t) \right] dW^i_t\right|^2 \right)\, .
\end{aligned}
\end{equation}
For the first term in the right-hand side, we have
\begin{align*}
	&\frac{1}{N} \sum_{i=1}^N \EE \sup_{0 \leq s \leq r} \left|\int_0^s 
	\left[b(\xi^{i,N,\bw}_t) - b(\xi^{i,N,\bv}_t)  \right]  dt \right|^2
	\\
	&
	\leq
	\frac{1}{N} \sum_{i=1}^N \EE \sup_{0 \leq s \leq r} s \int_0^s 
	\left[b(\xi^{i,N,\bw}_t) - b(\xi^{i,N,\bv}_t)  \right] ^2 dt
	\\
	&
	\leq
	r \int_0^r  \frac{1}{N} \sum_{i=1}^N \EE 
	\left[b(\xi^{i,N,\bw}_t) - b(\xi^{i,N,\bv}_t)  \right]^2 dt
\end{align*}
By the (global) Lipschitz continuity of $b$ and~\eqref{eq:useful-ineq-averageL2dist-W2}, so that using~\eqref{eq:varJ-bdd-wXw-vWv-2}, we can bound the quantity above by
\begin{align*}
	&C r \int_0^r  \left(\frac{1}{N}\sum_{i=1}^N \EE|X^{i,\bw}_t - X^{i,\bv}_t|^2 + \frac{1}{N}\sum_{i=1}^N \EE |w(t,X^{i,\bw}_t) - v(t,X^{i,\bv}_t)|^2\right) dt
	\\
	&
	\leq C r \int_0^r  \left(\frac{1}{N}\sum_{i=1}^N \EE \sup_{0 \leq s \leq t}|X^{i,\bw}_s - X^{i,\bv}_s|^2   \right) dt
	+ Cr^2\left(\Gamma^2 +  \frac{1}{R} \right)
	\\
	&
	\leq C \int_0^r  \left(\frac{1}{N}\sum_{i=1}^N \EE \sup_{0 \leq s \leq t}|X^{i,\bw}_s - X^{i,\bv}_s|^2   \right) dt
	+ C\left(\Gamma^2 + \frac{1}{R} \right).
\end{align*}
The term involving $\sigma$ in the right-hand side of~\eqref{eq:bnd-Xw-Xv-1} is estimated in the following way using Doob's maximal inequality: 
\begin{align*}
&\frac{1}{N}\sum_{i=1}^N \EE \sup_{0 \leq s \leq r} \left|\int_0^s \left[\sigma(t, X^{i,\bw}_t, \mu^{N,\bw}_t) - \sigma(t, X^{i,\bv}_t, \mu^{N,\bv}_t) \right] dW^i_t\right|^2
	\\
	&\hskip 55pt
\leq
	C
	\frac{1}{N}\sum_{i=1}^N \EE \int_0^r \left|\sigma(t, X^{i,\bw}_t, \mu^{N,\bw}_t) - \sigma(t, X^{i,\bv}_t, \mu^{N,\bv}_t) \right|^2 dt
	\\
	&\hskip 55pt
\leq
	C
	\int_0^r 
	\frac{1}{N}\sum_{i=1}^N \left( \EE \sup_{0 \leq s \leq t} |X^{i,\bw}_s - X^{i,\bv}_s|^2 \right) dt.
\end{align*}
Going back to~\eqref{eq:bnd-Xw-Xv-1}, we obtain that, for all $r \in [0,T]$,
\begin{align*}
	&\frac{1}{N}\sum_{i=1}^N\EE\bigl[ \sup_{0 \leq s \leq r} |X^{i,\bw}_s - X^{i,\bv}_s|^2\bigr]
	\\
	&\leq
	C \int_0^r \frac{1}{N}\sum_{i=1}^N \EE  \bigl[\sup_{0 \leq s \leq t} |X^{i,\bw}_s - X^{i,\bv}_s |^2 \bigr] dt 
		+ C\left( \Gamma^2 +  \frac{1}{R}\right).
\end{align*}
We conclude, by Gr{\"o}nwall's inequality, that there exists a constant $C$ depending on the Lipschitz constants of $\bv,\bw$ and $v(0,0), w(0,0)$, but not on $\Gamma$ nor $R$ such that
\begin{equation}
\label{eq:bdd-Xw-Xv-GammaE}
	\frac{1}{N}\sum_{i=1}^N\EE \Bigl[ \sup_{0 \leq t \leq T} |X^{i,\bw}_t - X^{i,\bv}_t|^2 \Bigr]
	\leq
	C\Bigl( \Gamma^2 +  \frac{1}{R}\bigr) \, .
\end{equation}

\textbf{Step 2:} we now show the desired bound on $\left | J^N(\bv) - J^N(\bw) \right |$.
 We have
 \begin{align*}
 &J^N(\bv) - J^N(\bw)
	\\
	&=
	\frac{1}{N}\sum_{i=1}^N \EE\left[\int_0^T 
	\left[f(\xi^{i,N,\bv}_t) - f(\xi^{i,N,\bw}_t)  \right] dt\right] 
	\\
	&\qquad 
	+ \frac{1}{N}\sum_{i=1}^N \EE\left[g(X^{i,\bv}_T,\mu^{N,\bv}_T) - g(X^{i,\bw}_T,\mu^{N,\bw}_T)\right]
 \end{align*}
 We first study the terminal cost. Using the local Lipschitz continuity of $g$ as articulated in the second bullet point of Assumption~\ref{hyp:ctrlMKV-fg-Lip} we get
\begin{align*}
 	&\frac{1}{N} \sum_{i=1}^N \EE |g(X^{i,\bv}_T,\mu^{N,\bv}_T) - g(X^{i,\bw}_T,\mu^{N,\bw}_T)|
	\\
	&\hskip 15pt
	\le \frac{L}{N} \sum_{i=1}^N \EE\Bigl[ \bigl(1+|X^{i,\bv}_T|+|X^{i,\bw}_T|+M_2(\mu^{N,\bv}_T)+M_2(\mu^{N,\bw}_T)\bigr)  \;  
	\\
	&\qquad\qquad \times
	\bigl(|X^{i,\bv}_T- X^{i,\bw}_T|+W_2(\mu^{N,\bv}_T,\mu^{N,\bw}_T)\bigr)\Bigr]
	\\
	&\hskip 15pt
	\le L\Bigl( \frac{1}{N} \sum_{i=1}^N \EE\Bigl[ \bigl|1+|X^{i,\bv}_T|+|X^{i,\bw}_T|+M_2(\mu^{N,\bv}_T)+M_2(\mu^{N,\bw}_T)\bigr|^2\Bigr]\Bigr)^{1/2}    
	\\
	&\qquad\qquad \times 
	\Bigl( \frac{1}{N} \sum_{i=1}^N \EE\Bigl[ \bigl(|X^{i,\bv}_T- X^{i,\bw}_T|+W_2(\mu^{N,\bv}_T,\mu^{N,\bw}_T)\bigr)^2\Bigr]\Bigr)^{1/2}
	\\
	&\hskip 15pt
	\le C \Bigl(1+ \frac{1}{N} \sum_{i=1}^N \EE[ |X^{i,\bv}_T|^2]+\frac{1}{N} \sum_{i=1}^N \EE[|X^{i,\bw}_T|^2]\Bigr)^{1/2}   
	\\
	&\qquad\qquad \times \Bigl( \frac{1}{N} \sum_{i=1}^N \EE[|X^{i,\bv}_T- X^{i,\bw}_T|^2]\Bigr)^{1/2}
	\\
	&\hskip 15pt
	\leq C \left( \Gamma^2 +  \frac{1}{R} \right)^{1/2}.
 \end{align*}
Next, we consider the variation of the running cost. Again, we use the local Lipschitz continuity of $f$ as articulated in the second bullet point of Assumption~\ref{hyp:ctrlMKV-fg-Lip}. We obtain:
\begin{equation}
\label{eq:cmp-w-v-taylor-f}
\begin{aligned}
	&\frac{1}{N}\sum_{i=1}^N \EE \left|
	f(t, X^{i,\bv}_t, \mu^{N,\bv}_t, v(t,X^{i,\bv}_t)) - f(t, X^{i,\bw}_t, \mu^{N,\bw}_t, w(t,X^{i,\bw}_t))
	\right|
	\\
	&
	\le \frac{L}{N} \sum_{i=1}^N \EE\Bigl[ 
	\Bigl(1+ \sum_{\varphi \in\{\bv, \bw\}} [|X^{i,\bvarphi}_t|+|v(t,X^{i,\bvarphi}_t)| +M_2(\mu^{N,\bvarphi}_t)] \Bigr)
	\\
	&
	\qquad \times \bigl(|X^{i,\bv}_t- X^{i,\bw}_t|+|v(t,X^{i,\bv}_t) - w(t,X^{i,\bw}_t)|+W_2(\mu^{N,\bv}_t,\mu^{N,\bw}_t)\bigr)\Bigr]\\
	&
	\le C\Bigl(1+ \frac{1}{N} \sum_{i=1}^N \EE[\sup_{0\le t\le T} |X^{i,\bv}_t|^2]+\frac{1}{N} \sum_{i=1}^N \EE[\sup_{0\le t\le T}|X^{i,\bw}_t|^2]\Bigr)^{1/2}    
	\\
	&\qquad \times \Bigl( \frac{1}{N} \sum_{i=1}^N \EE[\sup_{0\le t\le T}|X^{i,\bv}_t- X^{i,\bw}_t|^2]\Bigr)^{1/2}\\
	&
	\leq C \left( \Gamma^2 +  \frac{1}{R} \right)^{1/2},
\end{aligned}
\end{equation}
where proceeded as we did in the case of the terminal cost function $g$, using the Lipschitz and linear growth properties of $\bv$ and $\bw$, estimates \eqref{eq:varJ-bdd-wXw-vWv-2} and \eqref{eq:bdd-Xw-Xv-GammaE} .

\noindent
\textbf{Conclusion.} 
Putting together the estimates of Step 1 and Step 2, we conclude that
$$
	\left | J^N(\bv) - J^N(\bw) \right |
	\leq 
	C \left( \Gamma^2 +  \frac{1}{R} \right)^{1/2} .
$$
which is the desired conclusion.
\end{proof}

\section{Proofs of the Results of Section~\ref{SEC:APPROX-DISCRETETIME}}
\label{app:proof-discreteT}
We use freely the notations of Subsection \ref{SEC:APPROX-DISCRETETIME} and without any loss of generality, we assume that the random shocks $(\Delta \check W_n^i)_{i,n}$ of the discrete dynamics appearing in the statement of Problem~\ref{pb:discrete-MKV} are the increments of the Brownian motions appearing Problem~\ref{pb:MKV-Nagents}, to wit, for each $i,n$, we assume that
$$
	\Delta \check W_n^i = W^i_{t_{n+1}} - W^i_{t_{n}}.
$$
Recall that we assume that the feedback functions appearing in both Problems are the same Lipschitz function $\varphi$.

For the sake of convenience and later reference, we state and prove the following estimate.

\begin{lemma}
\label{lem:discreteT-estim-Xitn}
Assuming that the functions $b$ and $\sigma$ are Lipschitz in all their variables, for all $t_0$ and $t$  such that $0\le t_0\le t\le T$, we have:
\begin{equation}
	\label{eq:discreteT-estim-Xitn}
		\frac{1}{N} \sum_{i=1}^N \EE |X^{i}_t - X^{i}_{t_0}|^2 
		\leq
		C\Bigl(1+\frac{1}{N} \sum_{i=1}^N \EE [|X^{i}_{t_0}|^2]\Bigr) |t-t_0|,
\end{equation}
where the constant $C$ depends only on the data of the problem, possibly including $T$, the Lipschitz constant of the control $\varphi$ as well as the value $\varphi(0,0)$. In particular, $C$ is independent of $N$, $t_0$ and $t$.
\end{lemma}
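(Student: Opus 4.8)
The plan is the standard It\^o moment estimate for the $N$-particle system, taking care that every constant stays independent of $N$. For $0\le t_0\le t\le T$ and each $i$, I would start from
$$
	X^{i}_t - X^{i}_{t_0} = \int_{t_0}^t b\bigl(s, X^{i}_s, \mu^N_s, \varphi(s, X^{i}_s)\bigr)\,ds + \int_{t_0}^t \sigma\bigl(s, X^{i}_s, \mu^N_s\bigr)\,dW^i_s,
$$
square, take expectations, and bound the drift contribution by Cauchy--Schwarz and the diffusion contribution by the It\^o isometry, obtaining
$$
	\EE|X^{i}_t - X^{i}_{t_0}|^2 \le 2(t-t_0)\,\EE\!\int_{t_0}^t \bigl|b\bigl(s, X^{i}_s, \mu^N_s, \varphi(s, X^{i}_s)\bigr)\bigr|^2 ds + 2\,\EE\!\int_{t_0}^t \bigl|\sigma\bigl(s, X^{i}_s, \mu^N_s\bigr)\bigr|^2 ds.
$$

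Next I would invoke linear growth. Since $\varphi$ is Lipschitz, $|\varphi(s,x)|\le |\varphi(0,0)| + C(T+|x|)$; since $b$ and $\sigma$ are assumed Lipschitz in all their variables (with the $W_2$ distance in the measure argument), and $W_2(\mu^N_s,\delta_0) = M_2(\mu^N_s)$ while $s\le T$, this yields the pointwise bound $|b(s, X^{i}_s, \mu^N_s, \varphi(s, X^{i}_s))|^2 + |\sigma(s, X^{i}_s, \mu^N_s)|^2 \le C\bigl(1 + |X^{i}_s|^2 + M_2(\mu^N_s)^2\bigr)$. Averaging over $i$ and using $M_2(\mu^N_s)^2 = \frac{1}{N}\sum_{j=1}^N |X^{j}_s|^2$, I get
$$
	\frac{1}{N}\sum_{i=1}^N \EE|X^{i}_t - X^{i}_{t_0}|^2 \le C\int_{t_0}^t \Bigl(1 + \frac{1}{N}\sum_{i=1}^N \EE|X^{i}_s|^2\Bigr)\,ds,
$$
with $C$ depending only on $T$, the Lipschitz constants of $b$, $\sigma$, $\varphi$ and on $\varphi(0,0)$, in particular not on $N$.

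Finally I would close with an a priori second-moment bound. Setting $\phi(r) := \frac{1}{N}\sum_{i=1}^N \EE\sup_{t_0\le u\le r}|X^{i}_u|^2$, which is finite by well-posedness of the particle system (recall $\mu_0\in\cP_4(\RR^d)$ and the stability estimate \eqref{fo:stability}), the same decomposition on $[t_0,r]$ together with Doob's maximal inequality applied to the stochastic integrals gives $\phi(r) \le \frac{2}{N}\sum_{i=1}^N \EE|X^{i}_{t_0}|^2 + C\int_{t_0}^r (1+\phi(s))\,ds$, so Gr\"onwall's lemma yields $\phi(r)\le C\bigl(1+\frac{1}{N}\sum_{i=1}^N \EE|X^{i}_{t_0}|^2\bigr)$ for all $r\in[t_0,T]$. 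Substituting this into the previous display gives the claim, $\frac{1}{N}\sum_{i=1}^N \EE|X^{i}_t - X^{i}_{t_0}|^2 \le C\bigl(1+\frac{1}{N}\sum_{i=1}^N \EE|X^{i}_{t_0}|^2\bigr)|t-t_0|$.

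I do not anticipate a genuine obstacle; the only point requiring attention is keeping all constants independent of $N$, which works precisely because the empirical-measure term contributes exactly the population average $\frac{1}{N}\sum_j \EE|X^{j}_s|^2$ and not any per-particle quantity carrying an $N$-dependent factor, so the $\frac{1}{N}\sum_i$ commutes cleanly with the Gr\"onwall argument once $\phi$ is known to be finite. As noted in Remark~\ref{re:Lipschitz_coefs}, although Assumption~\ref{hyp:ctrlMKV-drift-vol} only provides Lipschitz continuity in time locally in the remaining variables, its linear structure makes the linear-growth bounds above hold verbatim, so the lemma applies in our setting.
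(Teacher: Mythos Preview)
Your proof is correct and shares the same core ingredients as the paper's---linear growth of $b,\sigma,\varphi$, the identity $M_2(\mu^N_s)^2=\frac{1}{N}\sum_j|X^j_s|^2$ to keep constants $N$-free, and a Gr\"onwall closure---but the organization differs. The paper applies It\^o's formula directly to $|X^i_t-X^i_{t_0}|^2$, obtaining $\theta_t\le \int_{t_0}^t\theta_s\,ds + C\int_{t_0}^t(1+\frac{1}{N}\sum_j\EE|X^j_s|^2)\,ds$ with $\theta_t:=\frac{1}{N}\sum_i\EE|X^i_t-X^i_{t_0}|^2$, then absorbs the second integrand via $|X^j_s|^2\le 2|X^j_s-X^j_{t_0}|^2+2|X^j_{t_0}|^2$ and closes with a single Gr\"onwall on $\theta_t$; no separate a priori bound or Doob inequality is needed. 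Your route---square the integral representation, then prove a separate sup-moment bound $\phi(r)\le C(1+\frac{1}{N}\sum_i\EE|X^i_{t_0}|^2)$ via Doob and Gr\"onwall, and substitute back---is equally valid and arguably more modular (the a priori bound is reusable), while the paper's version is a bit more economical since it never passes through the sup or Doob's inequality.
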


\begin{remark}
\label{re:Lip_extension}
The proofs of this lemma and the proof of Lemma~\ref{lem:discreteT-strongerr} are given under the assumption that the drift and volatility functions are globally Lipschitz. Strictly speaking, this assumption is not implied by Assumption \ref{hyp:ctrlMKV-drift-vol} suggested at the beginning of Appendix \ref{ap:assumptions}. Still as explained in Remark \ref{re:Lipschitz_coefs} a minor modification of the proofs given below shows that the same estimates hold, just because of the linearity structure of Assumption \ref{hyp:ctrlMKV-drift-vol}.
\end{remark}

\begin{proof}
It\^o's formula gives
\begin{equation*}
\begin{split}
|X^i_t-X^i_{t_0}|^2&=\int_{t_0}^t2(X^i_s-X^i_{t_0})\cdot b(s, X^{i}_s, \mu^{N}_s, \varphi(s,X^{i}_s)) ds 
\\
&\qquad + 
\int_{t_0}^t\text{trace}[\sigma(s, X^{i}_s, \mu^{N}_s)\sigma(s, X^{i}_s, \mu^{N}_s)^*]ds
+M^i_t\\
&\le \int_{t_0}^t|X^i_s-X^i_{t_0}|^2ds +  \int_{t_0}^t|b(s, X^{i}_s, \mu^{N}_s, \varphi(s,X^{i}_s))|^2 ds 
\\
&\qquad + 
\int_{t_0}^t\text{trace}[\sigma(s, X^{i}_s, \mu^{N}_s)\sigma(s, X^{i}_s, \mu^{N}_s)^*]ds
+M^i_t\\
&\le \int_{t_0}^t|X^i_s-X^i_{t_0}|^2ds + C \int_{t_0}^t \bigl(1+|X^{i}_s|^2+W_2( \mu^{N}_s,\delta_0)^2\bigr)ds
+M^i_t\\
&\le \int_{t_0}^t|X^i_s-X^i_{t_0}|^2ds + C \int_{t_0}^t \Bigl(1+|X^{i}_s|^2+\frac1N\sum_{i=1}^N|X^i_t|^2\Bigr)ds
+M^i_t
\end{split}
\end{equation*}
where $(M^i_t)_{t\ge t_0}$ is the square integrable martingale
$$
M^i_t=\int_{t_0}^t(X^i_s-X^i_{t_0})^*\sigma(s, X^{i}_s, \mu^{N}_s)dW^i_s
$$
and where we used the fact that
$$
|b(s,x,\mu,\varphi(s,x))|^2 + |\sigma(s,x,\mu)|^2\le C(1+|x|^2+W_2(\mu,\delta_0)^2)
$$
implied by the fact that $b$, $\sigma$ and $\varphi$ are assumed to be globally Lipschitz. Note that while its value can change from one line to the next, the constant $C$ is always only dependent upon the data of the problem, possibly including $T$, and the Lipschitz feedback function $\varphi$.
Taking expectations, summing over $i\in\{1,\ldots,N\}$ and dividing by $N$ we obtain:
\begin{equation}
\label{fo:almost}
\begin{split}
	\theta_t &\le \int_{t_0}^t \theta_s ds +C \int_{t_0}^t \Bigl(1+\frac1N\sum_{i=1}^N\EE[|X^i_t|^2]\Bigr)ds
	\\
	&\le (1+C)\int_{t_0}^t \theta_s ds +C\Bigl(1+\frac{1}{N} \sum_{i=1}^N \EE [|X^{i}_{t_0}|^2]\Bigr)( t-t_0)
\end{split}
\end{equation}
if  we denote by $\theta_t$ the left hand side of \eqref{eq:discreteT-estim-Xitn}, and we conclude using Gronwall inequality. 
\end{proof}

\begin{proof}[Proof of Lemma~\ref{lem:discreteT-strongerr}]
For each $i\in\{1,\ldots,N\}$, and $t\in[t_n,t_{n+1}]$ we set $e^{i,n}_t=X^i_t-\check X^i_{t_n}$. It\^o's formula gives
\begin{equation}
\label{fo:Ito2}
\begin{split}
|e^{i,n}_t|^2&=|e^{i,n}_{t_n}|^2+\int_{t_n}^t2e^{i,n}_s\cdot b(s, X^{i}_s, \mu^{N}_s, \varphi(s,X^{i}_s)) ds 
\\
&\qquad + 
\int_{t_n}^t\text{trace}[\sigma(s, X^{i}_s, \mu^{N}_s)\sigma(s, X^{i}_s, \mu^{N}_s)^*]ds
+M^i_t\\
&\le |e^{i,n}_{t_n}|^2+\int_{t_n}^t|e^{i,n}_s|^2ds +  \int_{t_n}^t|b(s, X^{i}_s, \mu^{N}_s, \varphi(s,X^{i}_s))|^2 ds 
\\
&\qquad + 
\int_{t_n}^t |\sigma(s, X^{i}_s, \mu^{N}_s)|^2 ds
+M^i_t\\
&\le |e^{i,n}_{t_n}|^2+\int_{t_n}^t|e^{i,n}_s|^2ds + C \int_{t_n}^t \bigl(1+|X^{i}_s|^2+W_2( \mu^{N}_s,\delta_0)^2\bigr)ds
+M^i_t\\
&\le |e^{i,n}_{t_n}|^2+\int_{t_n}^t|e^{i,n}_s|^2ds + C \int_{t_n}^t \Bigl(1+|X^{i}_s|^2+\frac1N\sum_{i=1}^N|X^i_t|^2\Bigr)ds
+M^i_t
\end{split}
\end{equation}
where $(M^i_t)_{t\ge t_n}$ is the square integrable martingale
$$
M^i_t=\int_{t_n}^te^{i,n*}_s\sigma(s, X^{i}_s, \mu^{N}_s)dW^i_s
$$
and where we used the fact that
$$
|b(s,x,\mu,\varphi(s,x))|^2 + |\sigma(s,x,\mu)|^2\le C(1+|x|^2+W_2(\mu,\delta_0)^2)
$$
implied by the fact that $b$, $\sigma$ and $\varphi$ are assumed to be globally Lipschitz. Note that while its value can change from one line to the next, the constant $C$ is always only dependent upon the data of the problem, possibly including $T$, and the Lipschitz feedback function $\varphi$. If we set:
\begin{equation}
\label{fo:eta_t}
\eta_t=\frac1N\sum_{i=1}^N\EE[|X^i_t-\check X^i_{t_n}|^2],
\end{equation}
Taking expectations, summing over $i\in\{1,\ldots,N\}$ and dividing by $N$ on both ends of \eqref{fo:Ito2} we obtain: 
for $ t_n\le t\le t_{n+1}$,
\begin{equation}
\label{fo:almost}
\eta_t\le \eta_{t_n}+\int_{t_n}^t \eta_s ds +C \int_{t_n}^t \Bigl(1+\frac1N\sum_{i=1}^N\EE[|X^i_t|^2]\Bigr)ds
\le \eta_{t_n}+\int_{t_n}^t \eta_s ds +C \Delta t,
\end{equation}
where we used Lemma \ref{lem:discreteT-estim-Xitn}. Next, Gr\"{o}nwall inequality  implies that:
\begin{equation}
\label{fo:first_theta}
\eta_t\le (\eta_{t_n}+C)\Delta t \;e^{\Delta t},
\end{equation}
and in order to complete the proof, we only need to prove a uniform bound on $\eta_{t_n}$. Obviously:
	\begin{equation}
	\label{fo:first}
|e^i_{t_{n+1}}|^2\le C\Bigl(|e^i_{t_n}|^2+
\Bigl | \int_{t_n}^{t_{n+1}} [\check b^i_{t_n} - b^i_{t} ] dt \Bigr |^2 + 
 \Bigl| \int_{t_n}^{t_{n+1}} [\check \sigma^i_{t_n} - \sigma^i_{t} ] dW^i_t \Bigr|^2\Bigr)
	\end{equation}
	where we used the notation 
	\begin{align*}
	&\check b^i_{t_n} = b({t_n}, {\check X}^{i}_{t_n}, {\check \mu}^{N}_{t_n}, \varphi(t_n,{\check X}^{i}_{t_n})),
	\qquad
	b^i_{t} = b(t, X^{i}_t, \mu^{N}_t, \varphi(t,X^{i}_t))\\
	&\check \sigma^i_{t_n} = \sigma({t_n}, {\check X}^{i}_{t_n}, {\check \mu}^{N}_{t_n}),
	\qquad
	\sigma^i_{t} = \sigma(t, X^{i}_t, \mu^{N}_t).
	\end{align*}
Introducing the notation $\delta_n = \frac{1}{N} \sum_{i=1}^N \EE  |e^i_{t_n}|^2$, taking expectations, summing over $i$ and dividing by $N$ on both sides of \eqref{fo:first} we get:
	\begin{equation}
	\label{fo:second}
		\delta_{n+1}
		\leq  C\Bigl(\delta_{n}+
	\frac1N\sum_{i=1}^N\EE\Bigl[\Bigl | \int_{t_n}^{t_{n+1}} [\check b^i_{t_n} - b^i_{t} ] dt \Bigr |^2\Bigr] + 
	\frac1N\sum_{i=1}^N\EE\Bigl[ \Bigl| \int_{t_n}^{t_{n+1}} [\check \sigma^i_{t_n} - \sigma^i_{t} ] dW^i_t \Bigr|^2\Bigr)
	\end{equation}
We estimate the right hand side of \eqref{fo:second} in the following way.
\begin{equation}
\label{fo:1}
\begin{split}
&\frac1N\sum_{i=1}^N\EE\Bigl[\Bigl | \int_{t_n}^{t_{n+1}} [\check b^i_{t_n} - b^i_{t} ] dt \Bigr |^2\Bigr] \\
&
\le \Delta t \int_{t_n}^{t_{n+1}} \frac{1}{N}\sum_{i=1}^N \EE \left|b(t_n, {\check X}^{i}_{t_n}, {\check \mu}^{N}_{t_n}, \varphi(t_n,{\check X}^{i}_{t_n})) - b(t, X^{i}_t, \mu^{N}_t, \varphi(t,X^{i}_t))\right|^2 dt \\
	\\
&
	\leq \,
	C \Delta t \int_{t_n}^{t_{n+1}} \left(|t_n-t|^2 + \frac{1}{N}\sum_{i=1}^N \EE|{\check X}^{i}_{t_n} - X^{i}_t|^2 
	+ W_2({\check \mu}^{N}_{t_n}, \mu^{N}_t)^2
	\right.
	\\
	&\qquad\qquad\qquad\qquad \left.+\frac{1}{N}\sum_{i=1}^N \EE |\varphi(t_n,{\check X}^{i}_{t_n}) - \varphi(t,X^{i}_t)|^2\right) dt \\
	\\
&
	\leq \,
	C \Delta t \int_{t_n}^{t_{n+1}} \left(|\Delta t|^2 + \frac{1}{N}\sum_{i=1}^N \EE|{\check X}^{i}_{t_n} - X^{i}_t|^2 \right) dt \\
	\\
&
	\leq \,
	C \Delta t \int_{t_n}^{t_{n+1}} \left(|\Delta t|^2 + \frac{1}{N}\sum_{i=1}^N \EE|{\check X}^{i}_{t_n} - X^{i}_{t_n}|^2 + \frac{1}{N}\sum_{i=1}^N \EE|X^{i}_{t_n} - X^{i}_t|^2 \right) dt \\
	\\
&
	= \,
	C \Delta t\left(|\Delta t|^3 + \Delta t \, \delta_n + \frac{1}{N}\sum_{i=1}^N \EE \int_{t_n}^{t_{n+1}}  |X^{i}_{t_n} - X^{i}_t|^2 dt \right),
\end{split}
\end{equation}
where we used the (global) Lipschitz assumption on $b$ and on $\varphi$, and a simple upper bound on the Wasserstein distance between two empirical measures. The constant $C$, whose value can change from one line to the next, depends on the Lipschitz constant of $\varphi$. Similarly we estimate the third term in the right hand side of \eqref{fo:second} by
\begin{equation}
\label{fo:2}
\begin{split}
&\frac1N\sum_{i=1}^N\EE\Bigl[ \Bigl| \int_{t_n}^{t_{n+1}} [\check \sigma^i_{t_n} - \sigma^i_{t} ] dW^i_t \Bigr|^2\Bigr]
\\
=&\frac1N\sum_{i=1}^N\EE\Bigl[ \int_{t_n}^{t_{n+1}} |\check \sigma^i_{t_n} - \sigma^i_{t} |^2 dt\Bigr]
\\
&
\le \int_{t_n}^{t_{n+1}} \frac{1}{N}\sum_{i=1}^N \EE \bigl[|\sigma(t_n, {\check X}^{i}_{t_n}, {\check \mu}^{N}_{t_n}) - \sigma(t, X^{i}_t, \mu^{N}_t)|^2 \bigr]dt\\
&\leq \,
	C  \int_{t_n}^{t_{n+1}} \Bigl(|t_n-t|^2 + \frac{1}{N}\sum_{i=1}^N \EE|{\check X}^{i}_{t_n} - X^{i}_t|^2 
	+ W_2({\check \mu}^{N}_{t_n}, \mu^{N}_t)^2\Bigr)dt \\
&\leq \,
	C  \int_{t_n}^{t_{n+1}} \Bigl(|\Delta t|^2 + \frac{1}{N}\sum_{i=1}^N \EE|{\check X}^{i}_{t_n} - X^{i}_t|^2 \Bigr) dt \\
&
	= \,
	C \Bigl(|\Delta t|^3 + \Delta t \, \delta_n + \frac{1}{N}\sum_{i=1}^N \EE \int_{t_n}^{t_{n+1}}  |X^{i}_{t_n} - X^{i}_t|^2 dt \Bigr).
\end{split}
\end{equation}
Also, because $\bX_t=(X^i_t)_{i=1,\ldots,N}$ is the solution of an $\RR^{Nd}$ valued stochastic differential equation with Lipschitz coefficients, standard existence results imply the existence of a positive constant $K$ depending only on the data of the problem and the control $\varphi$ through its Lipschitz constant and $\varphi(0,0)$ satisfying
\begin{equation}
\label{fo:3}
	\frac{1}{N} \sum_{i=1}^N \EE \int_{t_n}^{t_{n+1}} |X^{i}_{t_n} - X^{i}_t|^2 dt
	\leq
	K |\Delta t|.
\end{equation}
Plugging \eqref{fo:1}, \eqref{fo:2} and \eqref{fo:3} into \eqref{fo:second} we get:
Hence
\begin{align*}
	\delta_{n+1}
		&\leq \delta_n C(1+  \Delta t)
		+
		\tilde K  \Delta t,
\end{align*}
for some constants $C$ and $\tilde K$ depending only on the data of the problem (including $T$) and the control $\varphi$ through its Lipschitz constant and $\varphi(0,0)$. Using the facts $\delta_0=0$ and $n\Delta t\le N_T\Delta t=T$, we conclude using a discrete version of Gr{\"o}nwall inequality.
\end{proof}
We now proceed to the proof of Proposition~\ref{prop:approx-discreteT}. 

\begin{proof}[Proof of Proposition~\ref{prop:approx-discreteT}]
We have
\begin{align*}
	&|J^N(\bvarphi) -  \check J^N(\bvarphi)|
	\leq 
	\delta_f + \delta_g,
\end{align*}
where
\begin{align*}
	&\delta_f = 
	\\
	&\Bigl|\sum_{n=0}^{N_T-1} \int_{t_n}^{t_n+1} 
	\frac{1}{N}\sum_{i=1}^N 
	\EE\left[   f(t, X^i_t, \mu^N_t, \varphi(t, X^i_t)) - f(t_n, \check X^i_{t_n}, \check \mu_{t_n}, \varphi(t_n, \check X^i_{t_n})) \right]dt\Bigr|
\end{align*}
and
$$
	\delta_g
	= \Bigl| \EE[G(\mu^N_T)-G(\check\mu^N_{T})]\Big|
$$
where the function $G$ is defined by $G(\mu)=\int_{\RR^d}g(x,\mu)\mu(dx)$.
Let us first analyze the difference $\delta_g$ which, for obvious notational reasons, is much easier to handle. 
For $\lambda\in[0,1]$ we set $X^i_\lambda=X^i_T+\lambda(\check X^i_T-X^i_T)$ for $i\in\{1,\ldots,N\}$, and $X_\lambda=(X^i_\lambda)_{i=1,\ldots,N}$ and $\mu_\lambda=\frac1N\sum_{i=1}^N\delta_{X^i_\lambda}$. Introducing
the empirical projection $G^N$ of $G$ defined on $\RR^{dN}$ by:
$$
G^N(x^1,\ldots,x^N)=G\Bigl(\frac1N\sum_{i=1}^N\delta_{x^i}\Bigr)
$$
(see \cite[Definition 5.34 p.399]{MR3752669}) we have:

\begin{equation*}
\begin{split}
G(\check\mu^N_T)-G(\mu^N_{T})&=G(\mu_1)-G(\mu_0)\\
&=G^N(X_1)-G^N(X_0)\\
&=\int_0^1\frac{d}{d\lambda}G^N(X_\lambda)d_\lambda\\
&=\int_0^1\sum_{i=1}^N\partial_{x^i}G^N(X_\lambda)\cdot(\check X^i_T-X^i_T)d\lambda\\
&=\frac1N\sum_{i=1}^N\Bigl(\int_0^1\partial_{\mu}G(\mu_\lambda)(X^i_\lambda)d\lambda\Bigr)\cdot(\check X^i_T-X^i_T)
\end{split}
\end{equation*}
where we used the expression of the partial derivatives of $G^N$ in terms of the L-derivative $\partial_\mu G$ as given in 
\cite[Proposition 5.35 p.399]{MR3752669}. So
$$
\delta_g\le \Bigl(\frac1N\sum_{i=1}^N \EE\Bigl[\Bigl|\int_0^1\partial_{\mu}G(\mu_\lambda)(X^i_\lambda)d\lambda\Bigr|^2\Bigr]\Bigr)^{1/2}
\Bigl(\frac1N\sum_{i=1}^N\EE[\check X^i_T-X^i_T|^2]\Bigr)^{1/2}.
$$
Lemma \ref{lem:discreteT-strongerr} implies that the second factor in the above right hand side is bounded from above by $C\sqrt{\Delta t}$, so we only need to prove that the first factor is bounded by a constant depending only upon the data of the problem and the Lipschitz constant of $\varphi$. In order to do so, we compute the L-derivative of $G$ as given in 
\cite[Example 3 p.386]{MR3752669}, and we use the fact that the partial derivatives $\partial_xg(x,\mu)$ and
$\partial_\mu g(x,\mu)(y)$ are of linear growth. We get:

\begin{equation*}
\begin{split}
&\frac1N\sum_{i=1}^N \EE\Bigl[\Bigl|\int_0^1\partial_{\mu}G(\mu_\lambda)(X^i_\lambda)d\lambda\Bigr|^2\Bigr]
\\
&=\frac1N\sum_{i=1}^N \EE\Bigl[\Bigl|\int_0^1\Bigl(\partial_{x}g(X^i_\lambda,\mu_\lambda)
+\frac1N\sum_{j=1}^N\partial_\mu g(X^j_\lambda,\mu_\lambda)(X^i_\lambda)\Bigr)d\lambda \Bigr|^2\Bigr]\\
&\le\frac CN\sum_{i=1}^N \EE\Bigl[\int_0^1\Bigl(|\partial_{x}g(X^i_\lambda,\mu_\lambda)|^2
+\frac1N\sum_{j=1}^N|\partial_\mu g(X^j_\lambda,\mu_\lambda)(X^i_\lambda)|^2\Bigr)d\lambda \Bigr]\\
&\le\frac CN\sum_{i=1}^N \EE\Bigl[\int_0^1|\partial_{x}g(X^i_\lambda,\mu_\lambda)|^2 d\lambda 
+\frac C{N^2} \EE\Bigl[\int_0^1\sum_{i,j=1}^N|\partial_\mu g(X^j_\lambda,\mu_\lambda)(X^i_\lambda)|^2 d\lambda \Bigr]\\
&\le\frac CN\sum_{i=1}^N \EE\Bigl[\int_0^1\Bigl(|X^i_\lambda|^2+W_2(\mu_\lambda,\delta_0)^2\Bigr)d\lambda  \Bigr]\\
&\le\frac CN\sum_{i=1}^N \int_0^1\EE[|X^i_\lambda|^2]d\lambda\\
&\le\frac CN\sum_{i=1}^N\EE[|X^i_T|^2] +\frac C{3N}\sum_{i=1}^N \EE[|\check X^i_T-X^i_T|^2]
\end{split}
\end{equation*}
which is bounded from above as desired.

\vskip 2pt
The control of $\delta_f$ is a little bit more involved. First we split $\delta_f$ as the sum $\delta_f=\delta^1_f+\delta^2_f$ with:
\begin{align*}
&\delta^1_f =
	\\
	&\Bigl|\sum_{n=0}^{N_T-1} \int_{t_n}^{t_{n+1}} 
	\frac{1}{N}\sum_{i=1}^N 
	\EE\left[   f(t, X^i_t, \mu^N_t, \varphi(t, X^i_t)) - f(t_n, X^i_{t_n}, \mu^N_{t_n}, \varphi(t_n,  X^i_{t_n})) \right]dt\Bigr|
\end{align*}
and
\begin{align*}
&\delta^2_f =
	\\
	&\Delta t\; \Bigl|\sum_{n=0}^{N_T-1} 	\frac{1}{N}\sum_{i=1}^N 
	\EE\left[   f(t_n, X^i_{t_n}, \mu^N_{t_n}, \varphi(t_n, X^i_{t_n})) - f(t_n, \check X^i_{t_n}, \check \mu_{t_n}, \varphi(t_n, \check X^i_{t_n})) \right]\Bigr|.
\end{align*}
The estimation of $\delta^1_f$ cannot be done following the strategy used in the previous appendix or in the estimation of $\delta^2_f$ below. Indeed, the quantity to control involves the difference of quantities evaluated at two different times, $t_n$ and $t$ to be specific. It is thus natural to use It\^o's formula to express this difference, hence our reliance on a stronger differentiability assumption for the running cost function $f$ and the feedback function $\varphi$. In order to rely on the classical It\^o's formula (as opposed to the chain rule for functions of marginal laws of diffusion processes as in \cite[Proposition 5.102 p485]{MR3752669} for example) we introduce the empirical projection $f^N$ defined by:
$$
f^N\bigl(t,x,(x^1,\cdots,x^N)\bigr)=f\bigl(t,x,\frac1N\sum_{i=1}^N\delta_{x^i},\varphi(t,x)\bigr).
$$
It\^o's formula gives:
\begin{equation*}
\begin{split}
&f(t, X^i_t, \mu^N_t, \varphi(t, X^i_t)) - f(t_n, X^i_{t_n}, \mu^N_{t_n}, \varphi(t_n,  X^i_{t_n}))\\
&\qquad
=f^N\bigl(t, X^i_t, (X^1_t,\cdots,X^N_t)\bigr) - f^N\bigl(t_n, X^i_{t_n}, (X^1_{t_n},\cdots,X^N_{t_n})\bigr)\\
&\qquad
=\int_{t_n}^t \partial_t f^N\bigl(s,X^i_s,(X^1_s,\cdots,X^N_s)\bigr) ds\\
&\qquad\qquad 
+\sum_{j=0}^N\int_{t_n}^t \partial_{x^j} f^N\bigl(s, X^i_s, (X^1_s,\cdots,X^N_s)\bigr) b\bigl(s,X^j_s,\mu^N_s,\varphi(s,X^j_s)\bigr)ds\\
&\qquad\qquad
+\frac12\sum_{h,k=0}^N\int_{t_n}^t \partial^2_{x^hx^k}f^N\bigl(s,X^i_s,(X^1_s,\cdots,X^N_s)\bigr)\;d[X^h,X^k]_s +M^i_t
\end{split}
\end{equation*}
where we used the convention $x^0=x$ and $X^0_s=X^i_s$, and where $(M^i_t)_{t_n\le t\le T}$ is a mean-zero square-integrable martingale. Formulas become quite lengthy when we try to go back to the function $f$ and its partial derivatives. So in order to shorten them, we introduce the following notations:
$$
\Theta^h_s=\bigl(s, X^h_s, \mu^N_s, \varphi(s, X^h_s)\bigr)
\qquad\text{and}\qquad
\Lambda^h_s=\bigl(s, X^h_s, (X^1_s,\cdots,X^N_s)\bigr).
$$
as well as the notation 
$
a(t,x,\mu)=\sigma(t,x,\mu)\sigma(t,x,\mu)^*
$
for the diffusion matrix. This gives:

\begin{equation*}
\begin{split}
&f(t, X^i_t, \mu^N_t, \varphi(t, X^i_t)) - f(t_n, X^i_{t_n}, \mu^N_{t_n}, \varphi(t_n,  X^i_{t_n}))\\
=&\int_{t_n}^t \Bigl[\partial_t f\bigl(\Theta^i_s\bigr) +\partial_\alpha f\bigl(\Theta^i_s\bigr)\partial_t\varphi(s,X^i_s)\Bigr]\, ds\\
&
+\int_{t_n}^t \Bigl[\partial_x f\bigl(\Theta^i_s\bigr)+\partial_\alpha f\bigl(\Theta^i_s\bigr)\partial_x\varphi(s,X^i_s)\Bigr]b\bigl(\Theta^i_s\bigr)ds
\\
&+\sum_{j=1}^N\int_{t_n}^t \partial_{x^j} f^N\bigl(\Lambda^i_s\bigr) b\bigl(\Theta^j_s\bigr)ds\\
&
+\frac12\int_{t_n}^t a(s,X^i_s,\mu^N_s)\bigl[\partial^2_{xx}f\bigl(\Theta^i_s\bigr)+2\partial^2_{x\alpha}f\bigl(\Theta^i_s\bigr)\partial_x\varphi(s,X^i_s)\\
&
+\partial_{x}\varphi(s,X^i_s)^*\partial^2_{\alpha\alpha} f\bigl(\Theta^i_s\bigr)\partial_{x}\varphi(s,X^i_s)+\partial_\alpha f\bigl(\Theta^i_s\bigr)\partial^2_{xx}\varphi(s,X^i_s)\bigr]ds\\
&
+\int_{t_n}^ta(s,X^i_s,\mu^N_s)\partial^2_{xx^i}f^N\bigl(\Lambda^i_s\bigr) \,ds
\\
&
+\frac12\int_{t_n}^t \sum_{j=1}^N a(s,X^j_s,\mu^N_s)\partial^2_{x^jx^j}f^N\bigl(\Lambda^i_s\bigr)\;ds +M^i_t
\end{split}
\end{equation*}
We now use \cite[Proposition 5.35 p399]{MR3752669} and \cite[Proposition 5.91 p471]{MR3752669} to express the partial derivatives of the empirical projection $f^N$ in terms of the partial derivatives of the original function $f$. We get:

\begin{equation}
\label{eq:integrals-delta-f-1}
\begin{split}
&f(t, X^i_t, \mu^N_t, \varphi(t, X^i_t)) - f(t_n, X^i_{t_n}, \mu^N_{t_n}, \varphi(t_n,  X^i_{t_n}))\\
=&\int_{t_n}^t \Bigl[\partial_t f\bigl(\Theta^i_s\bigr) +\partial_\alpha f\bigl(\Theta^i_s\bigr)\partial_t\varphi(s,X^i_s)\Bigr]\, ds\\
&
+\int_{t_n}^t \Bigl[\partial_x f\bigl(\Theta^i_s\bigr)+\partial_\alpha f\bigl(\Theta^i_s\bigr)\partial_x\varphi(s,X^i_s)\Bigr]b\bigl(\Theta^i_s\bigr)ds
\\
&+\frac1N\sum_{j=1}^N\int_{t_n}^t \partial_{\mu} f\bigl(\Theta^i_s\bigr)(X^j_s) b\bigl(\Theta^j_s\bigr)ds\\
&
+\frac12\int_{t_n}^t a(s,X^i_s,\mu^N_s)\bigl[\partial^2_{xx}f\bigl(\Theta^i_s\bigr)+2\partial^2_{x\alpha}f\bigl(\Theta^i_s\bigr)\partial_x\varphi(s,X^i_s)\\
&
+\partial_{x}\varphi(s,X^i_s)^*\partial^2_{\alpha\alpha} f\bigl(\Theta^i_s\bigr)\partial_{x}\varphi(s,X^i_s)+\partial_\alpha f\bigl(\Theta^i_s\bigr)\partial^2_{xx}\varphi(s,X^i_s)\bigr]ds\\
&
+\frac1N\int_{t_n}^ta(s,X^i_s,\mu^N_s)\bigl[\partial_{x}\partial_\mu f\bigl(\Theta^i_s\bigr)(X^i_s)
+\partial_{\alpha}\partial_\mu f\bigl(\Theta^i_s\bigr)(X^i_s)\partial_x\varphi(s,X^i_s)\bigr]\,ds
 \\
&
+\frac1{2N}\sum_{j=1}^N\int_{t_n}^t  a(s,X^j_s,\mu^N_s)\partial_v\partial_{\mu}f\bigl(\Theta^i_s\bigr)(v)|_{v=X^j_s}\;ds
\\
&+\frac1{2N^2}\sum_{j=1}^N\int_{t_n}^t  a(s,X^j_s,\mu^N_s)\partial^2_{\mu}f\bigl(\Theta^i_s\bigr)(X^j_s,X^j_s)\;ds 
+M^i_t.
\end{split}
\end{equation}

\vskip 6pt
We now bound the above term by $C \Delta t$. Recall that for the control $\varphi$, estimates~\eqref{eq:hyp-vaphi-Euler} hold. Moreover, from Assumption~\ref{hyp:euler-extra-assumption-b-sigma-f}, using the notation $|\Theta^j_s| = |s| + |X^j_s| + M_2(\mu^N_s) + |\varphi(s, X^j_s)|$, we have
for $b$ and $a$:
$$
	|b\bigl(\Theta^j_s\bigr)| \le C|\Theta^j_s|,
	 \qquad
	 |a(s,X^i_s,\mu^N_s)| \le C \left( |s|^2 + |X^i_s|^2 + M_2(\mu^N_s)^2 \right),
$$
and for the terms involving $f$, $|\partial_t f\bigl(\Theta^i_s\bigr)| \le C (1+|\Theta^i_s|^2)$, $|\partial_t f\bigl(\Theta^i_s\bigr)|$
and $|\partial_\alpha f\bigl(\Theta^i_s\bigr)|$ are bounded from above by  $C (1+|\Theta^i_s|)$, and $|\partial_{\mu} f\bigl(\Theta^i_s\bigr)(X^j_s)|$
by $C (1+|(\Theta^i_s, X^j_s)|)$
while $|\partial^2_{xx}f\bigl(\Theta^i_s\bigr)|$,  $|\partial^2_{x\alpha}f\bigl(\Theta^i_s\bigr)|$, $|\partial^2_{\alpha\alpha} f\bigl(\Theta^i_s\bigr)|$, $|\partial_{x}\partial_\mu f\bigl(\Theta^i_s\bigr)(X^i_s)|$, $|\partial_{\alpha}\partial_\mu f\bigl(\Theta^i_s\bigr)(X^i_s)|$, $|\partial_v\partial_{\mu}f\bigl(\Theta^i_s\bigr)(v)_{\big|v=X^j_s}|$ and $|\partial^2_{\mu}f\bigl(\Theta^i_s\bigr)(X^j_s,X^j_s)|$ are bounded by a constant $C$.
 By the above bounds, we have, for the first integral in~\eqref{eq:integrals-delta-f-1}:
\begin{align*}
	&\left| \EE \int_{t_n}^t \Bigl[\partial_t f\bigl(\Theta^i_s\bigr) +\partial_\alpha f\bigl(\Theta^i_s\bigr)\partial_t\varphi(s,X^i_s)\Bigr]\, ds \right|
	\\
	&\le C \int_{t_n}^t  \EE\left[ |\Theta^i_s|^2 + |(s, \Theta^i_s)| \right] ds
	\\
	&\le C \int_{t_n}^t  \EE\left[ |s|^2 + |X^j_s|^2 + M_2(\mu^N_s)^2 + |\varphi(s, X^j_s)|^2 \right] ds
	\\
	&\le C \Delta t,
\end{align*}
where the $C$ depends only on the data of the problem and on $\varphi$ through its Lipschitz constant and $\varphi(0,0)$. 
The other integral terms in the right hand side of~\eqref{eq:integrals-delta-f-1} can be bounded similarly, except that to bound the term
\begin{align*}
	&\left| \EE\left[  \int_{t_n}^t a(s,X^i_s,\mu^N_s) \partial_\alpha f\bigl(\Theta^i_s\bigr)\partial^2_{xx}\varphi(s,X^i_s) ds \right] \right|
	\\
	&
	\le C \int_{t_n}^t  \EE\left[ \left(|s|^2 + |X^j_s|^2 + M_2(\mu^N_s)^2\right) \left( |s| + |X^j_s| + M_2(\mu^N_s) + |\varphi(s, X^j_s)| \right)  \right]  ds,
\end{align*}
we use stability estimate on the fourth moment.
  Going back to the definition of $\delta^1_f$, we obtain that
$$
	\delta^1_f \le C \Delta t \le C \sqrt{\Delta t}.
$$

\vskip 6pt
As for $\delta^2_f$, it is estimated exactly as we did above in the case of the terminal cost. Indeed:
$$
\delta^2_f=
	\Delta t\; \Bigl|\sum_{n=0}^{N_T-1} 	[F_n(\mu^N_{t_n})-F_n(\check \mu_{t_n})]\Bigr|
$$
where for each $n\in\{0,\ldots,N_T-1\}$,  the function $F_n$ is defined as
$$
F_n(\mu)=\int_{\RR^d}f(t_n,x,\mu,\varphi(t_n,x))\;d \mu(x).
$$
For each $n$, the feedback function $\varphi$ being Lipschitz and (hence) with linear growth, the function $F_n$ has the same properties as the function $G$ above. As a result, one can bound the difference $F_n(\mu^N_{t_n})-F_n(\check \mu_{t_n})$ in the same way, and since the bound is independent of $n$, $\delta^2_f$ admits the same upper bound as $\delta_g$.
This concludes the proof. 
\end{proof}

\bibliographystyle{plain}
 \small
\bibliography{deeplearningMFG-bib}

\end{document}